
\documentclass{article}

\usepackage{microtype}
\usepackage{graphicx}
\usepackage{subcaption}
\usepackage{booktabs} 

\usepackage{hyperref}



\usepackage[preprint]{icml2026}


\usepackage{amsmath}
\usepackage{amssymb}
\usepackage{mathtools}
\usepackage{amsthm}

\usepackage[capitalize,noabbrev]{cleveref}


\usepackage{hyperref}

\usepackage{enumitem}

\usepackage{subcaption}

\usepackage{longtable}
\theoremstyle{plain}
\newtheorem{theorem}{Theorem}[section]
\newtheorem{proposition}[theorem]{Proposition}
\newtheorem{lemma}[theorem]{Lemma}

\theoremstyle{definition}
\newtheorem{definition}[theorem]{Definition}
\newtheorem{assumption}[theorem]{Assumption}
\theoremstyle{remark}

\newtheorem{examp}[theorem]{example}

\newcommand{\e}[1]{\mathbb{E}\left[#1\right]  } 
\newcommand{\indep}{\perp\!\!\!\perp}

\usepackage[textsize=tiny]{todonotes}

\icmltitlerunning{Conditional Feature Importance revisited: Double Robustness, Efficiency and Inference}

\begin{document}

\twocolumn[
  \icmltitle{Conditional Feature Importance revisited: Double Robustness, Efficiency and Inference}



  \icmlsetsymbol{equal}{*}

  \begin{icmlauthorlist}
    \icmlauthor{Angel Reyero Lobo}{yyy,comp}
    \icmlauthor{Pierre Neuvial}{yyy}
    \icmlauthor{Bertrand Thirion}{comp}
  \end{icmlauthorlist}

  \icmlaffiliation{yyy}{Institut de Mathémathiques de Toulouse, UMR5219, Université de Toulouse, CNRS, Toulouse, France}
  \icmlaffiliation{comp}{Université Paris-Saclay, Inria, CEA, Palaiseau, 91120, France}

  \icmlcorrespondingauthor{Angel Reyero Lobo}{angel.reyero-lobo@inria.fr}

  \icmlkeywords{Variable Importance, Conditional Feature Importance, Controlled Variable Selection}

  \vskip 0.3in
]



\printAffiliationsAndNotice{}  

\begin{abstract}
 Conditional Feature Importance (CFI; \citet{Strobl2008}) is a classical variable importance measure that accounts for the relationship between the studied feature and the others. However, CFI has not yet been studied from a theoretical perspective because the conditional sampling step has generally been considered  a purely practical problem. In this article, we demonstrate that the recent Conditional Permutation Importance (CPI) is indeed a valid implementation of this concept. Under the conditional null hypothesis, we then establish a \textbf{double robustness} property that can be used for variable selection. With either a valid model or a valid conditional sampler, the method correctly identifies null coordinates. 

    Under the alternative hypothesis, we study the theoretical target and link it to the popular Total Sobol Index (TSI). We introduce the \textbf{Sobol-CPI}, which generalizes CPI/CFI, prove that it is nonparametrically efficient, and provide a bias correction. Finally, we propose a consistent and valid \textbf{type-I error} test and present numerical experiments that illustrate our findings.
\end{abstract}

\section{Introduction}

Modern machine learning models can make accurate predictions, so using them can help characterize the dependency structure between variables and thus provide valuable insights for further research.   
For example, if a model can accurately predict a disease based on a set of genes, identifying the important predictive variables can guide specialists on which genes to study further to combat the disease.

There is usually a trade-off between \textit{model transparency} and \textit{complexity}.
Indeed, simple methods such as linear regression are easily interpretable, since each covariate's importance can be directly assessed via its estimated coefficient. However, using a model that poorly fits the underlying distribution may yield misleading results \citep{molnar2021general}.
Therefore, one needs to rely on \textit{model-agnostic} approaches based on flexible models that give information about the data distribution. 
In this way, one can adapt to complex situations by capturing non-linear dependencies, recovering the truly important variables in correlated settings.

The two main model-agnostic approaches to measure the importance of a covariate are \textit{perturbation} and \textit{removal} approaches \citep{covert}. Both of them aim to disable the information given by the covariate.

In perturbation/permutation approaches, such as Permutation Feature Importance (PFI, \citet{Mi2021}) and Conditional Permutation Importance (CPI, \citet{chamma2024statistically}), the information is disabled by computing the loss when permuting the coordinate marginally or conditionally.

The most used removal method is Leave One Covariate Out (LOCO), where the importance of a covariate is estimated by evaluating the performance of a model re-trained without that covariate \citep{lei2018,williamson2021}. 
%
%

LOCO is essentially a direct plug-in estimate of the predictive power of the model compared to the predictive power of the restricted model \citep{williamson2023general}. 
As such, it effectively estimates the Total Sobol Index (TSI), an importance index originating from Sensitivity Analysis \citep{first-total-order-effects}. 
TSI aims to assess the predictive capacity/ explained variance of a coordinate given the others.

Due to extrapolation issues with PFI \citep{Hooker2021}, there has been a shift towards conditional approaches. These include Conditional Feature Importance (CFI) \cite{Strobl2008} and Conditional Model Reliance (CMR; \citet{fisher2019all}), defined as a ratio rather than a difference. However, the conditional sampling step is often \textit{overlooked}; a Gaussian distribution or copula is commonly used but performs poorly in practice due to covariance estimation issues \citep{blain2025knockoffsfaildiagnosingfixing}. Our first contribution is to show that the conditional permutation used in CPI (and CMR) has theoretical grounding, making it an instance of CFI.

These conditional methods have shown strong empirical performance (\citet{chamma2024statistically, chamma2024variable, paillard2025measuringvariableimportanceheterogeneous}). We explain this by a double robustness property, typically sought in causal inference, which we introduce here for variable importance.

Variable selection seeks the smallest set of predictive variables—those not independent of the output \textit{given} the others.
%
%
Assuming that each covariate is not directly a function of the others, this set is well-defined and unique.
This assumption is standard in variable selection \citep{candes2017panninggoldmodelxknockoffs} and feature importance \citep{locoVSShapley}. 
%

Permutation-based approaches do not provide a proper quantification of variable importance.
%
In particular, \citet{scornet2022mda} have shown that PFI does not converge to TSI. Yet, the theoretical quantity targeted by CPI has not been studied. 
In this article, we fill this gap and show that, with a simple adjustment, the method can be adapted to estimate TSI. We call this extension \textit{Sobol-CPI}, an unbiased generalization of CPI, for which we establish nonparametric efficiency and study inference to provide statistical guarantees.

In summary, our main contributions are:
\begin{itemize}
    \item Provide conditions under which the CPI conditional sampler is valid, showing that CPI constitutes a CFI measure.
    \item a \textit{double robustness} property: to detect a null covariate, only one of the two estimates used in CFI needs to be accurate. This explains the empirical performance of CPI. By contrast, LOCO requires both models to be accurate. In linear settings, CPI’s bias decays quadratically, while LOCO’s decays linearly.
    \item \textit{Sobol-CPI}, a new estimator of TSI based on a permutation approach, bridging the gap with removal methods. We prove its asymptotic efficiency.
    \item a consistent and valid test using variance correction, and a nonparametric test with finite-sample guarantees based on the sampler’s accuracy.
    \item extensive numerical experiments to illustrate these findings in simulated and real-data.
\end{itemize}
Proofs and additional experiments are in the appendix. These results hold for \textit{any} conditional sampler.
\section{Framework}
\subsection{Setting}\label{subsec:setting}
We observe $\{(x_i, y_i)\}_{i=1, \ldots, n_\mathrm{test}, \ldots, n_\mathrm{test} + n_\mathrm{train}}$ a test and train set  sampled i.i.d. from $P_0$, a distribution from a class of distributions $\mathcal{M}$, where $X \in \mathcal{X}\subset \mathbb{R}^p$ and $y\in \mathcal{Y}\subset \mathbb{R}$. We are interested in assessing the importance of a coordinate $j\in\{1, \ldots, p\}$. 
We denote by $X^j$ the $j$-th column of $X$ and by $X^{-j}$ the vector $X$ with the $j$-th coordinate excluded. 
%
%
We consider a space of functions $\mathcal{F}$ with a norm $\|\cdot\|_\mathcal{F}$. The notation is summarized in the glossary (Appendix \ref{app:glossary}).

Similarly to the Model-X framework \citep{barber-candes2015,candes2017panninggoldmodelxknockoffs}, we want to avoid making strong assumptions about the relationship between inputs and outputs, as it can be complex. However, we assume that the relationship among the input covariates is simple, typically because they originate from the same generative process (e.g. a measurement device). 
%
%
Then, it can be much more efficient and accurate to study the relationship between the $j$-th covariate, $X^j$, given the rest, $X^{-j}$, rather than directly regressing $y$ on $X^{-j}$. This approach is also advantageous in settings where labeling data is expensive, as there are typically more available samples for the input pair $(X^{-j}, X^j)$ than for the pair $(X^{-j}, y)$. However, CPI’s empirical robustness to this assumption is shown in \cite{paillard2025measuringvariableimportanceheterogeneous}.

We denote by $m(X) := \mathbb{E}[y\mid X]$ (resp. $m_{-j}(X^{-j}) := \mathbb{E}[y\mid X^{-j}]$) the conditional expectation of the input $y$ given $X$ (resp. $X^{-j}$), by $\widehat{m}$ (resp. $\widehat{m}_{-j}$) its estimation.  Similarly, we denote by $\nu_{-j}(X^{-j}) := \mathbb{E}[X^j\mid X^{-j}]$ and $\widehat{\nu}_{-j}$ its estimate. We denote by $\ell : \widehat{\mathcal{Y}} \times \mathcal{Y} \to \mathbb{R}$ the loss used to compare a prediction $\widehat{m}(X)$ with the output $y$. 
These estimations are the usual objective of machine learning models. This is obvious for the quadratic loss, but for many other losses ($R^2$, deviance, classification accuracy, and the area under the ROC curve, see  \citet{williamson2023general}) the minimizer is also a function of this conditional expectation.


\begin{definition}[TSI]\label{def:TSI} Given $(X, y)\sim P_0$ and a loss $\ell$ we define the Total Sobol Index of $j\in\{1, \ldots, p\}$ as
\begin{align*}
    \psi_{TSI}(j, P_0):=\e{\ell\left(m_{-j}(X^{-j}), y\right)}-\e{\ell(m(X), y)}.
\end{align*}
\end{definition}

We define 
this quantity as the (Generalized) Total Sobol Index because, when the loss is the quadratic loss, it corresponds to the unnormalized TSI \citep{first-total-order-effects}. 
It represents the loss decay when the covariate is not used. It has also been referred to as the Generalized ANOVA \citep{williamson2021} and is considered a standard importance measure \citep{lei2018, Rinaldo2019, Hooker2021, scornet2022mda, williamson2023general}. A large decay indicates that the feature is useful, whereas a small one indicates that it lacks predictive power when the remaining covariates are used. \citet{scornet2022mda} considered it as the best quantity for support recovery.

\subsection{Related work}

LOCO consists of a simple plug-in estimate of the TSI: 

\begin{definition}[LOCO] Given $j$, a loss $\ell$, a regressor $\widehat{m}$ of $y$ given $X$, a regressor $\widehat{m}_{-j}$ of $y$ given $X^{-j}$ and a test set $(X_i, y_i)_{i=1,\ldots n_\mathrm{test}}$, LOCO is defined as
\begin{align*}
    \widehat{\psi}_\mathrm{LOCO}^j=\frac{1}{n_{\mathrm{test}}}\sum_{i=1}^{n_{\mathrm{test}}}\ell\left(\widehat{m}_{-j}(x_{i}^{-j}), y_i\right)-\ell\left(\widehat{m}(x_{i}),y_i)\right).
\end{align*}
\end{definition}


Nonparametric theory \citep{kennedy2023semiparametricdoublyrobusttargeted} shows that a simple plug-in estimate does not achieve optimal convergence, typically requiring a one-step correction. However, \citet{williamson2021} demonstrated that this correction is unnecessary with the quadratic loss, and \citet{williamson2023general} extended this to other losses under certain regularity conditions. This provides valid confidence intervals. Similar results will be established for our method in \cref{subsec:asymp_eff}.


A major practical limitation is that it requires retraining a model, $\widehat{m}_{-j}$, for each coordinate $j$. This is computationally intensive and it also introduces optimization errors that do not compensate as desired, as discussed in \cref{subsec:double-robust}.

A first naive \textit{permutation-based} approach consists in comparing the performance of $\widehat{m}$ on a test set where the $j$-th column is shuffled: the formal definition of PFI is given in Appendix \ref{sect:PFI}.
%
Even though it avoids refitting and \citet{Mi2021} report good performance, it suffers from extrapolation bias, as predictions may fall in low-density regions where $\widehat{m}$ was not trained \citep{Hooker2021}. It also fails to control type-I error with highly correlated covariates \citep{chamma2024statistically}. Moreover, \citet{scornet2022mda} showed that it does not target an interpretable quantity: it decomposes into three components, only the first being desirable ($\psi_\mathrm{TSI}$). The other are misleading with correlation.



To address this issue, \citet{Strobl2008} proposed \textit{conditionally} permuting the $j$-th coordinate with respect to the others. In this way, only the information exclusively given by $X^j$ is shuffled, while the relationship with the rest of the coordinates is preserved. As a result, one can make predictions based on the new conditional sample, denoted by $\widetilde{x}'^{(j)}_i$, 
where the apostrophe indicates that it was estimated. 
 More formally, \citet{chamma2024statistically} proposed:

\begin{definition}[CPI] Given $j$, a loss $\ell$, a regressor $\widehat{m}$ of $y$ given $X$ and a test set $(X_i, y_i)_{i=1,\ldots n_\mathrm{test}}$, CPI is defined as
\begin{align*}
    \widehat{\psi}_{\mathrm{CPI}}^j=\frac{1}{n_{\mathrm{test}}}\sum_{i=1}^{n_{\mathrm{test}}}\ell\left(\widehat{m}(\widetilde{x}_{i}'^{(j)}),y_i\right)-\ell\left(\widehat{m}(x_{i}), y_i\right),
\end{align*}
where the $j$-th coordinate is \textit{conditionally} permuted.
\end{definition}

For the conditional permutation, they proposed regressing the $j$-th coordinate with respect to $X^{-j}$ to estimate $\nu_{-j}(X^{-j}) = \mathbb{E}[X^j \mid X^{-j}]$, and then adding the permuted residuals of this regression. Thus, $\widetilde{x}'^{(j)l} = x^l$ for any $l \neq j$, and $\widetilde{x}'^{(j)j} = \widehat{\nu}_{-j}(x^{-j}) + (x^j - \widehat{\nu}_{-j}(x^{-j}))^\mathrm{perm}$, where the permutation is across the individuals. We denote by $\widetilde{X}^{(j)} \sim P_j^\star \in \mathcal{M}$ the theoretical random variable based on the true $\nu_{-j}$, and $\widetilde{X}'^{(j)} \sim P_j'$ based on the estimated $\widehat{\nu}_{-j}$. We omit the superscript $(j)$ when the coordinate is clear.


CPI requires training a separate regressor $\widehat{\nu}_{-j}$ for each covariate, which may seem counterintuitive since permutation approaches aim to avoid fitting $\widehat{m}_{-j}$. However, as discussed in \cref{subsec:setting}, while the relationship between $y$ and $X$ can be complex, between $X^j$ and $X^{-j}$ is simple. Then, unlike with LOCO, we use a simple model to estimate $\nu_{-j}$. 

The goal of CPI is to sample $\widetilde{X}^{(j)}$ such that $\widetilde{X}^{(j)} \sim X, \widetilde{X}^{(j)-j} = X^{-j}$ and $\widetilde{X}^{(j)j} \indep y \mid X^{-j}$. 
However, \citet{chamma2024statistically} did not discuss the assumptions under which this method samples from the target distribution;
we address this in \cref{subsec:cond_sampl}.
Furthermore, CPI does not estimate a known theoretical quantity, so in \cref{sect:sobol-CPI} we propose a modification to target TSI.

Finally, in \citet{chamma2024statistically}, a type-I error control is proposed using asymptotic normality. However, it does not address the fact that, under the conditional null hypothesis, similarly to what happens with LOCO, the influence function vanishes. Consequently, there is a need to correct the estimated variance to ensure type-I error control \citep{williamson2023general, Dai2024, locoVSShapley}. In \cref{subsec:inference}, we address this issue for valid inference.

\section{Theoretical properties of CPI/CFI}
\subsection{Validity of conditional sampling}\label{subsec:cond_sampl}
We observe that there is no need to preserve the same sampler used for CPI. Normalizing flows \citep{papamakarios2021normalizingflowsprobabilisticmodeling} are theoretically sound, but impractical due to the large number of training samples they require. Other alternatives are domain-specific generative models, e.g. \citet{sesia2020}. Since the conditional sampling used in CPI, as well as in \citet{Hooker2021} and \citet{blain2025knockoffsfaildiagnosingfixing}, works well in practice and improves computational efficiency, we study its validity.
%
Let us denote the $i$-th observation in which only the $j$-th coordinate has been conditionally permuted: 
\begin{equation}
 \widetilde{x}^{'(j)l}_{i}=
\begin{cases}
 x^l_i & \text{if } l\neq j \\
  \widehat{\nu}_{-j}(x_i^{-j})+\left\{x^j-\widehat{\nu}_{-j}(x^{-j})\right\}^\mathrm{perm} & \text{if } l=j
\end{cases}\label{eq:cpi-sampling}
\end{equation}

First, we assume that each covariate brings an additive independent innovation, similar to a standard assumption in regression (\ref{ass:regressModel}), but for each covariate.
 \begin{assumption}[Additive innovation]\label{ass:generalAssumpCondSampl} For each $j\in\{1,\ldots,p\}$, there exists a function $\nu_{-j}$ such that $X^j=\nu_{-j}(X^{-j})+\epsilon_j$ with $\epsilon_j\indep X^{-j}$ and $\e{\epsilon_j}=0$
\end{assumption}
 This $\nu_{-j}$ is exactly the conditional expectation because $$\e{X^{j}\mid X^{-j}}=\e{\nu_{-j}\left(X^{-j}\right)+\epsilon_j\mid X^{-j}}=\nu_{-j}\left(X^{-j}\right).$$
For instance, Gaussian data satisfies this assumption.
\begin{lemma}[Gaussian additive noise] For a Gaussian vector $X$, additive innovation (\ref{ass:generalAssumpCondSampl}) is satisfied.\label{lemma:GaussianAss}
\end{lemma}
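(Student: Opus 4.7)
The plan is to use the standard closure properties of Gaussian vectors, specifically the fact that conditional expectations of Gaussian vectors are affine and that uncorrelatedness is equivalent to independence in the jointly Gaussian setting.

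First, I would write $X \sim \mathcal{N}(\mu, \Sigma)$ with blocks $\mu_j$, $\mu_{-j}$ and covariance submatrices $\Sigma_{jj}$, $\Sigma_{j,-j}$, $\Sigma_{-j,-j}$. Define the candidate regression function
\begin{equation*}
\nu_{-j}(x^{-j}) := \mu_j + \Sigma_{j,-j}\Sigma_{-j,-j}^{-1}(x^{-j} - \mu_{-j}),
\end{equation*}
which is the standard formula for $\mathbb{E}[X^j \mid X^{-j}]$ in the Gaussian case (if $\Sigma_{-j,-j}$ is singular, replace the inverse by a Moore--Penrose pseudoinverse; the argument below is unchanged). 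Then set $\epsilon_j := X^j - \nu_{-j}(X^{-j})$, so that the additive decomposition $X^j = \nu_{-j}(X^{-j}) + \epsilon_j$ holds by construction.

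Next I would check the two required properties. For the mean, linearity of expectation gives $\mathbb{E}[\nu_{-j}(X^{-j})] = \mu_j$, hence $\mathbb{E}[\epsilon_j] = 0$. For independence, the key observation is that $(\epsilon_j, X^{-j})$ is a linear transformation of the Gaussian vector $X$, and is therefore itself jointly Gaussian. Then I would compute
\begin{equation*}
\mathrm{Cov}(\epsilon_j, X^{-j}) = \Sigma_{j,-j} - \Sigma_{j,-j}\Sigma_{-j,-j}^{-1}\Sigma_{-j,-j} = 0.
\end{equation*}
Since zero covariance implies independence for jointly Gaussian variables, we obtain $\epsilon_j \indep X^{-j}$.

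There is no real obstacle here: the only minor care needed concerns the possibly singular covariance $\Sigma_{-j,-j}$, handled via the pseudoinverse together with the standard fact that the identity $\Sigma_{j,-j}\Sigma_{-j,-j}^{+}\Sigma_{-j,-j} = \Sigma_{j,-j}$ still holds on the column space of $\Sigma_{-j,-j}$, which contains $X^{-j} - \mu_{-j}$ almost surely. With this, the conclusion follows directly from the Gaussian characterization of independence.
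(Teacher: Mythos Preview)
Your proof is correct and follows essentially the same approach as the paper: define $\epsilon_j$ as the residual of the Gaussian conditional mean, observe joint Gaussianity, and deduce independence from zero covariance. The only cosmetic difference is that you compute $\mathrm{Cov}(\epsilon_j, X^{-j})$ directly via block-matrix algebra, whereas the paper computes the covariance between $\epsilon_j$ and $\nu_{-j}(X^{-j})$ via the tower property; your version is in fact slightly more aligned with what Assumption~\ref{ass:generalAssumpCondSampl} actually requires.
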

We also need the estimate $\widehat{\nu}_{-j}$ to be consistent, i.e.
$\e{\left(\widehat{\nu}_{-j}\left(X^{-j}\right)-\nu_{-j}\left(X^{-j}\right)\right)^2}\to 0.$

For instance, the Random Forest is consistent under mild assumptions (\citet{Scornet_2015ConsistencyRF}). 
If we assume that $X$ is Gaussian, a linear model is consistent. 
To deal with high-dimensionality, we generally use a Lasso \cite{lasso}.

Under the previous assumptions, the 2-Wasserstein distance between the estimated conditional distribution and the true distribution converges to 0:

\begin{proposition}[Empirical conditional sampling]
    Under additive innovation (\ref{ass:generalAssumpCondSampl}), if the regressor $\widehat{\nu}_{-j}$ is consistent, then $\widetilde{x}'^{(j)}$ constructed as in \eqref{eq:cpi-sampling}, is sampled from $P'_j$, and $\mathcal{W}_2(P'_j, P^\star_j)\to 0$. \label{prop:empCondSamp}
\end{proposition}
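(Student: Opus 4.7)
The plan is to prove the two claims in turn, using Assumption~\ref{ass:generalAssumpCondSampl} to rewrite the empirical residual in a way that isolates the regression error and supports a natural synchronous coupling with the target law $P_j^\star$.

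\textbf{First claim.} Since $\widehat{\nu}_{-j}$ is fit on the training set (independent of the test set) and the test points $X_i, X_k$ are i.i.d.\ for $k\neq i$, Assumption~\ref{ass:generalAssumpCondSampl} lets me write
\begin{align*}
R_k := X_k^j - \widehat{\nu}_{-j}(X_k^{-j}) = \bigl[\nu_{-j}(X_k^{-j}) - \widehat{\nu}_{-j}(X_k^{-j})\bigr] + \epsilon_{j,k},
\end{align*}
which is a function of $(X_k^{-j},\epsilon_{j,k})$ and hence, conditionally on $\widehat{\nu}_{-j}$, is independent of $X_i^{-j}$. The construction~\eqref{eq:cpi-sampling} therefore produces $X_i^{-j}$ in the non-$j$ slots and $\widehat{\nu}_{-j}(X_i^{-j}) + R_k$ in the $j$-th slot, i.e.\ the systematic part $\widehat{\nu}_{-j}(X_i^{-j})$ plus an independent noise distributed as a fresh residual under $\widehat{\nu}_{-j}$. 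By definition this is a draw from $P'_j$.

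\textbf{Second claim.} For the Wasserstein bound I would exhibit a synchronous coupling of $\widetilde{X}'^{(j)}\sim P'_j$ and $\widetilde{X}^{(j)}\sim P_j^\star$ that uses the same triple $(X_i^{-j}, X_k^{-j}, \epsilon_{j,k})$ in both constructions. The coordinates $l\neq j$ agree, while in the $j$-th coordinate the two expansions differ only through the regression error and telescope to
\begin{align*}
\widetilde{X}'^{(j)j} - \widetilde{X}^{(j)j}
&= \bigl[\widehat{\nu}_{-j}(X_i^{-j}) - \nu_{-j}(X_i^{-j})\bigr] \\
&\quad - \bigl[\widehat{\nu}_{-j}(X_k^{-j}) - \nu_{-j}(X_k^{-j})\bigr].
\end{align*}
Using $(a-b)^2\le 2(a^2+b^2)$ and the fact that $X_i^{-j}, X_k^{-j}$ share the law of $X^{-j}$ gives
\begin{align*}
\mathbb{E}\bigl\|\widetilde{X}'^{(j)} - \widetilde{X}^{(j)}\bigr\|^2 \le 4\,\mathbb{E}\bigl[\bigl(\widehat{\nu}_{-j}(X^{-j}) - \nu_{-j}(X^{-j})\bigr)^2\bigr],
\end{align*}
which tends to $0$ by the consistency of $\widehat{\nu}_{-j}$. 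Since $\mathcal{W}_2^2$ is the infimum over couplings, the same bound controls $\mathcal{W}_2^2(P'_j, P_j^\star)$ and the result follows.

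\textbf{Where the work is.} No single step is technically deep; the essential point—and the reason Assumption~\ref{ass:generalAssumpCondSampl} is invoked—is that additive innovation is exactly what lets us split $R_k$ into a deterministic bias $\nu_{-j}(X_k^{-j})-\widehat{\nu}_{-j}(X_k^{-j})$ plus a noise $\epsilon_{j,k}$ independent of $X_k^{-j}$. This decomposition (i) makes the CPI draw a genuine sample from $P'_j$ rather than from some distribution that secretly couples the residual with $X^{-j}$, and (ii) makes the telescoping coupling collapse the Wasserstein cost to twice the $L^2$ regression error. The only remaining care concerns the mode of convergence: since $\widehat{\nu}_{-j}$ is random, taking expectation over the training data in the bound above yields $\mathbb{E}\,\mathcal{W}_2^2(P'_j, P_j^\star)\to 0$, from which convergence in probability of $\mathcal{W}_2(P'_j,P_j^\star)$ follows.
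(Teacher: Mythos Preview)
Your proof is correct and follows essentially the same route as the paper's: both use the additive innovation assumption to write the residual as $\epsilon_j$ plus a regression error, couple $P'_j$ and $P_j^\star$ synchronously via the same pair $(X_i,X_k)$, and bound $\mathcal{W}_2$ by (twice) the $L^2$ error of $\widehat{\nu}_{-j}$. The only cosmetic difference is that the paper applies Minkowski's inequality on $\mathcal{W}_2$ directly whereas you use $(a-b)^2\le 2(a^2+b^2)$ on $\mathcal{W}_2^2$; your added remark on the mode of convergence is a small clarification the paper omits.
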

Proposition~\ref{prop:empCondSamp} implies that CPI's sampler is asymptotically drawing from the desired distribution. Then, CPI qualifies as a Conditional Feature Importance \cite{Hooker2021}. However, in the following, we abstract from this choice and provide general results for any empirical sampler $P'_j$.

\subsection{Double robustness}\label{subsec:double-robust}
We have observed that there is a need to estimate two models for both LOCO ($\widehat{m}$ and $\widehat{m}_{-j}$) and CPI ($\widehat{m}$ and $P'_{j}$). 
In this section, we prove a \emph{double robustness} property for detecting conditionally null covariates: to identify a null covariate, it is sufficient that one of the two estimates is consistent. This contrasts with LOCO, where errors in both estimates must compensate.  This property explains the good empirical results obtained by CPI for variable selection \cite{chamma2024statistically,chamma2024variable, paillard2025measuringvariableimportanceheterogeneous}.


We begin by a general result, then illustrate double robustness for the quadratic loss, and finally derive explicit rates in a linear setting: CPI's bias decays quadratically, whereas LOCO's decays only linearly.

Note that under the null hypothesis, the theoretical model satisfies
$m(X)=\mathbb{E}[y \mid X]=\mathbb{E}[y \mid X^{-j}]$ since $y \indep X^j \mid X^{-j}$; hence, it does not depend on the null feature.
We require that the ML model asymptotically does neither depend on unimportant features:
\begin{assumption}[Asymptotic relevance]\label{ass:asymp_relevance}
Denote by $g_j(x, s)$ the vector $x$ with the $j$-th component replaced by $s \in \mathbb{R}$. For $\epsilon>0, x\in\mathcal{X},s\in\mathbb{R}$ and $X^j\indep y\mid X^{-j}$, there exists $n_0$ such that for $n\geq n_0$, $$|\hat{m}_n(x)-\hat{m}_n(g_j(x, s))|\leq \epsilon\text{ a.s}.$$
\end{assumption}
This is a pointwise convergence on the null features, which can be easily verified for any GLM since the estimated null coefficients tend to 0. It is satisfied under the representability assumption for the Lasso \citep{asymp_rel_lasso}. Under standard assumptions on Random Forests, the splits are performed with high probability only along the important features  (Prop. 1 \citet{Scornet_2015ConsistencyRF}). For kernel methods, convergence in the RKHS together with the reproducing property implies pointwise convergence, and thus asymptotic relevance. For Gaussian processes, variable selection can be achieved using dimension-specific scalings \citep{Bhattacharya_2014}. Finally, there are also results for 1-norm penalized SVM \citep{1_pen_svm} and neural networks \citep{NEURIPS2020_1959eb9d}. In Sections~\ref{subsec:asympt_relev} and \ref{app:asymp_rel_real_data}, we show numerically that this property holds across a broad benchmark of ML models on both simulated and real datasets.

In any case, since our goal is to study conditional independence using a model, we are implicitly summarizing the $X$-$y$ relationship through the first-order moment ($\e{Y \mid X}$), and implicitly assuming that the model mainly relies only on the important features. Thus, the assumption is natural and is a formalization of what is expected from $\widehat{m}$ when using ML models to study conditional independence.

\begin{theorem}[Double robustness]\label{th:doubleRobust}
Assume that $X^j \indep y \mid X^{-j}$. Given a conditional sampler $P'_j$ such that $\widetilde{X}'^{(j)} \overset{\mathcal{L}}{\to} \widetilde{X}^{(j)}$, then for any bounded and continuous loss $\ell$ and continuous $\widehat{m}$, $\widehat{\psi}_\mathrm{CPI}\to 0$ a.s. 
On the other hand, given Assumption \ref{ass:asymp_relevance}, for any conditional sampler $P'_j$ and continuous loss $\ell$, $\widehat{\psi}_\mathrm{CPI} \to 0$ a.s.
\end{theorem}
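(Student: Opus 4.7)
The plan is to tackle the two statements separately, noting that both hold under the conditional null $X^j \indep y \mid X^{-j}$. The common strategy is to show that the averaged loss difference defining $\widehat{\psi}_\mathrm{CPI}$ vanishes, but the two statements achieve this through different mechanisms: Part 1 equates two expectations via a distributional argument, while Part 2 equates two predictions pointwise.

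For the first (sampler-consistent) statement, I would first establish the joint distributional identity $(X,y) \stackrel{d}{=} (\widetilde{X}^{(j)}, y)$. This holds because $\widetilde{X}^{(j),-j} = X^{-j}$ by construction, the conditional law of $\widetilde{X}^{(j),j}$ given $X^{-j}$ matches that of $X^j$ given $X^{-j}$ by definition of the ideal conditional sampler, and the null conditional independence $X^j \indep y \mid X^{-j}$ identifies the joints with $y$. Consequently, for any measurable $\widehat{m}$,
\begin{equation*}
\mathbb{E}\bigl[\ell(\widehat{m}(\widetilde{X}^{(j)}),y)\bigr] = \mathbb{E}\bigl[\ell(\widehat{m}(X),y)\bigr].
\end{equation*}
Conditionally on the training sample, $\widehat{m}_n$ is a fixed measurable map and the test samples are i.i.d., so the SLLN reduces $\widehat{\psi}_\mathrm{CPI}$ to $\mathbb{E}[\ell(\widehat{m}_n(\widetilde{X}'^{(j)}),y) - \ell(\widehat{m}_n(X),y)] + o(1)$ almost surely. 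Since $\widehat{m}$ is continuous and $\ell$ bounded continuous, the map $(x,y)\mapsto \ell(\widehat{m}(x),y)$ is bounded continuous; the portmanteau lemma combined with $\widetilde{X}'^{(j)} \overset{\mathcal{L}}{\to} \widetilde{X}^{(j)}$ therefore closes the gap between the estimated and ideal sampler expectations, and combined with the display above yields $\widehat{\psi}_\mathrm{CPI} \to 0$ a.s.

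For the second (model-consistent) statement, I would bypass the sampler and invoke Assumption~\ref{ass:asymp_relevance} directly. Under the null, the assumption applied with $x = x_i$ and $s = \widetilde{x}_i'^{(j),j}$ gives $|\widehat{m}_n(x_i) - \widehat{m}_n(\widetilde{x}_i'^{(j)})| \to 0$ almost surely for each test point, since the two vectors differ only in the null $j$-th coordinate. Continuity of $\ell$ then transports this to $|\ell(\widehat{m}_n(x_i),y_i) - \ell(\widehat{m}_n(\widetilde{x}_i'^{(j)}),y_i)| \to 0$ a.s., and a dominated convergence argument over the test set—using that the data lie in the support $\mathcal{X}\times\mathcal{Y}$ and $\ell \circ \widehat{m}_n$ is thus well-controlled—delivers $\widehat{\psi}_\mathrm{CPI} \to 0$ a.s.

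The main obstacle lies in Part 1: three quantities vary simultaneously with $n$, namely the sampler, the estimator $\widehat{m}_n$, and the test-set size. The SLLN only gives a conditional statement, and the portmanteau step must be applied to a function $\ell \circ \widehat{m}_n$ that itself changes with $n$. The boundedness of $\ell$ is precisely what prevents any uniform-integrability issue at this step, since it allows portmanteau convergence to be invoked for each $n$ with the measurable $\widehat{m}_n$ at hand. In Part 2, the pointwise nature of Assumption~\ref{ass:asymp_relevance} is the mild subtlety: passing from a per-sample limit to the averaged test statistic requires a dominated convergence argument, standard but worth making explicit.
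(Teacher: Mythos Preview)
Your proposal is correct and follows essentially the same route as the paper: Part 1 combines the distributional identity $(X,y)\stackrel{d}{=}(\widetilde{X}^{(j)},y)$ under the null with the SLLN and the portmanteau lemma (the paper writes $\widehat m$ as a fixed continuous $f$, i.e.\ conditions on the training sample, exactly as you do), and Part 2 uses asymptotic relevance to make $\widehat m_n(\tilde x_i')-\widehat m_n(\tilde x_i)$ small and then transports via continuity of $\ell$ to bound the average. The only cosmetic difference is that the paper phrases Part 2 as a direct $\epsilon$--$\delta$ argument on the finite test average rather than invoking dominated convergence.
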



The first implication of the theorem extends Theorem~2 from \citet{Koenig2021Relative} by making explicit the estimation of the conditional distribution. The second relies on the model's implicit variable selection: as the loss optimizer, the model will generally not assign importance to irrelevant features, under the standard assumptions ensuring consistency of the ML model. For unbounded losses, the result holds if the loss is clipped with a sufficiently large constant.

\paragraph{Quadratic loss.} In the rest of this section, we focus on the bias introduced by the need to estimate the regressors, therefore, having $n_\mathrm{train}$ fixed.
As is usual in regression, we assume an independent additive noise:

\begin{assumption}[Additive noise]
    $y=m(X)+\epsilon$ with $\epsilon\perp\!\!\!\perp X$ and $\e{\epsilon}=0$.\label{ass:regressModel}
\end{assumption}

We study the estimation bias of CPI in Proposition \ref{prop:cpi-bias} and of LOCO in Proposition \ref{prop:loco-bias}.

 \begin{proposition}
Assuming $X^j \indep y \mid X^{-j}$ and additive noise (\ref{ass:regressModel}), we have 
\begin{align*}
     \mathbb{E}&\left[\widehat{\psi}_\mathrm{CPI}^j\middle|\mathcal{D}_\mathrm{train}\right]=\mathbb{E}\left[\left(\widehat{m}(\widetilde{X})-\widehat{m}(\widetilde{X}')\right)^2\right.\\&\left.+2\left((m(\widetilde{X})-\widehat{m}(\widetilde{X})))(\widehat{m}(\widetilde{X})-\widehat{m}(\widetilde{X}')\right)\middle|\mathcal{D}_\mathrm{train}\right].
 \end{align*}
 \label{prop:cpi-bias}
\end{proposition}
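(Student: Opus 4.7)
The plan is to reduce the quadratic loss difference to a clean algebraic identity by exploiting the two key consequences of the assumptions: under $X^j \indep y \mid X^{-j}$ we have $m(X) = m_{-j}(X^{-j})$, so $m$ does not depend on the $j$-th coordinate and therefore $m(X) = m(\widetilde{X}) = m(\widetilde{X}')$; and under additive noise (\ref{ass:regressModel}) the residual $\epsilon$ is independent of every covariate vector appearing in the argument of $\widehat{m}$. I would take conditional expectations given $\mathcal{D}_\mathrm{train}$ throughout, so that $\widehat{m}$ and $\widehat{\nu}_{-j}$ can be treated as fixed functions while $X$, $\widetilde{X}$ and $\widetilde{X}'$ remain random test quantities.

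First I would establish the distributional equality $(X,y) \stackrel{d}{=} (\widetilde{X}, y)$ under the null. This follows because $\widetilde{X}^{-j} = X^{-j}$ and, by the conditional-null hypothesis, the joint law of $(X^j, y) \mid X^{-j}$ factorizes into the product of marginals, which is also the law of $(\widetilde{X}^j, y) \mid X^{-j}$ by construction of the theoretical conditional sample. This gives $\mathbb{E}[(\widehat{m}(X) - y)^2 \mid \mathcal{D}_\mathrm{train}] = \mathbb{E}[(\widehat{m}(\widetilde{X}) - y)^2 \mid \mathcal{D}_\mathrm{train}]$, so the CPI bias can be rewritten with $\widehat{m}(\widetilde{X})$ in place of $\widehat{m}(X)$.

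Next I would substitute $y = m(\widetilde{X}) + \epsilon$ (permitted because $m(X) = m(\widetilde{X})$) and expand $(\widehat{m}(\widetilde{X}') - y)^2 - (\widehat{m}(\widetilde{X}) - y)^2$ with $u := \widehat{m}(\widetilde{X}') - m(\widetilde{X})$ and $v := \widehat{m}(\widetilde{X}) - m(\widetilde{X})$. All linear-in-$\epsilon$ terms vanish under $\mathbb{E}[\cdot \mid \mathcal{D}_\mathrm{train}]$ because $\epsilon \indep X$ by (\ref{ass:regressModel}) and $\widetilde{X}, \widetilde{X}'$ are measurable functions of the test covariates (and an independent permutation) only, so $\mathbb{E}[\epsilon \cdot h(\widetilde{X}, \widetilde{X}') \mid \mathcal{D}_\mathrm{train}] = 0$ for any bounded measurable $h$. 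What remains is $\mathbb{E}[u^2 - v^2 \mid \mathcal{D}_\mathrm{train}] = \mathbb{E}[(u-v)(u+v) \mid \mathcal{D}_\mathrm{train}]$.

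The last step is purely algebraic: setting $d := \widehat{m}(\widetilde{X}) - \widehat{m}(\widetilde{X}')$, one checks that $(u-v)(u+v) = d^2 + 2\,d\,(m(\widetilde{X}) - \widehat{m}(\widetilde{X}))$, which is exactly the stated integrand. The main conceptual obstacle, rather than computational, is the replacement of $\widehat{m}(X)$ by $\widehat{m}(\widetilde{X})$ in the first step; careful tracking of the joint law of the sampled replacement and the response under the conditional null is the only place where both hypotheses genuinely intervene, and the rest of the proof is bookkeeping on the quadratic expansion together with the independence of the additive noise.
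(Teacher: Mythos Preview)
Your argument is correct. The route differs from the paper's only in its algebraic packaging, not in substance. You swap $\widehat{m}(X)$ for $\widehat{m}(\widetilde{X})$ upfront via the joint-law identity $(X,y)\stackrel{d}{=}(\widetilde{X},y)$ under the null, then write $y=m(\widetilde{X})+\epsilon$ and factor $u^2-v^2=(u-v)(u+v)$. The paper instead keeps $\widehat{m}(X)$ in the second term, uses the additive noise assumption to replace $y$ by $m(X)$, and performs two nested add-and-subtract decompositions (first inserting $m(\widetilde{X})$, then $\widehat{m}(\widetilde{X})$); the leftover $(m(\widetilde{X})-\widehat{m}(\widetilde{X}))^2$ is cancelled against $(m(X)-\widehat{m}(X))^2$ using only the marginal identity $\widetilde{X}\sim X$. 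Your version is slightly more economical and makes the role of the null hypothesis explicit in a single distributional step; the paper's version separates more clearly which cancellations come from $m(X)=m(\widetilde{X})$ (null) versus $\widetilde{X}\sim X$ (always true), which is why their intermediate display still features the TSI term before it is set to zero. Either way, the only delicate point---that $\epsilon$ is independent of $(\widetilde{X},\widetilde{X}')$ given $\mathcal{D}_\mathrm{train}$ because the latter are functions of test covariates alone---is handled correctly in your write-up.
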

In this expression, we can observe the mentioned double robustness: it is sufficient to have either an accurate \( \widehat{m} \) or \( P'_j \). Specifically, the first error term will vanish either because \( \widehat{m} \) detected the \( j \)-th coordinate as not important (and therefore the function does not change due to it), or because \( P'_j \) is close to \( P^\star_j \), making \( \widetilde{X}' \) close to \( \widetilde{X} \). The second term is treated similarly.

%
 \begin{proposition}
 Under additive noise (\ref{ass:regressModel}), we have
 \begin{align*}
     \mathbb{E}\left[\widehat{\psi}_\mathrm{LOCO}^j\middle|\mathcal{D}_\mathrm{train}\right]&= \mathbb{E}\left[\left(m_{-j}(X^{-j})-\widehat{m}_{-j}(X^{-j})\right)^2\right.\\&\left.\qquad-(m(X)-\widehat{m}(X))^2\middle|\mathcal{D}_\mathrm{train}\right]. 
 \end{align*}
 \label{prop:loco-bias}
 \end{proposition}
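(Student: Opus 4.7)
My strategy is to peel off the irreducible noise term from both squared residuals and then apply an $L^2$ Pythagorean decomposition. First, i.i.d.\ sampling of the test set reduces the average to a single expectation,
\[
\mathbb{E}\!\left[\widehat{\psi}_\mathrm{LOCO}^j \,\middle|\, \mathcal{D}_\mathrm{train}\right] = \mathbb{E}\!\left[(y - \widehat{m}_{-j}(X^{-j}))^2 - (y - \widehat{m}(X))^2 \,\middle|\, \mathcal{D}_\mathrm{train}\right].
\]
Substituting $y = m(X) + \epsilon$ from Assumption \ref{ass:regressModel} and using $\epsilon \perp X$ with $\mathbb{E}[\epsilon] = 0$, the $\epsilon$-cross terms vanish and the $\epsilon^2$ terms cancel across the two squared residuals, so the right-hand side equals $\mathbb{E}\!\left[(m(X) - \widehat{m}_{-j}(X^{-j}))^2 - (m(X) - \widehat{m}(X))^2 \,\middle|\, \mathcal{D}_\mathrm{train}\right]$.

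Second, I would add and subtract $m_{-j}(X^{-j})$ inside the first squared term and expand. Conditioning on $(X^{-j}, \mathcal{D}_\mathrm{train})$ makes both $\widehat{m}_{-j}(X^{-j})$ and $m_{-j}(X^{-j})$ deterministic, so the cross term reduces to $(m_{-j}(X^{-j}) - \widehat{m}_{-j}(X^{-j}))\,\mathbb{E}[m(X) - m_{-j}(X^{-j}) \mid X^{-j}]$, which is zero by the tower property since $m_{-j}(X^{-j}) = \mathbb{E}[m(X) \mid X^{-j}]$. This yields
\[
\mathbb{E}\!\left[(m(X) - \widehat{m}_{-j}(X^{-j}))^2 \,\middle|\, \mathcal{D}_\mathrm{train}\right] = \mathbb{E}\!\left[(m(X) - m_{-j}(X^{-j}))^2\right] + \mathbb{E}\!\left[(m_{-j}(X^{-j}) - \widehat{m}_{-j}(X^{-j}))^2 \,\middle|\, \mathcal{D}_\mathrm{train}\right].
\]
Plugging this into the previous step gives the claimed identity up to the first term on the right.

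The calculation is essentially symmetric bias-variance bookkeeping, with no real technical obstacle. The only subtle point is that the identity as stated implicitly requires the conditional null $X^j \indep y \mid X^{-j}$ (equivalently, via Assumption \ref{ass:func_cond_equival}, $m(X) = m_{-j}(X^{-j})$), consistent with the section's focus on detecting null covariates; without this assumption the right-hand side would carry an additional $\mathbb{E}[(m(X) - m_{-j}(X^{-j}))^2] = \psi_{TSI}(j, P_0)$ term, and the proposition would instead describe the bias of LOCO as an estimator of TSI.
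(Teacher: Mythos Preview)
Your proof is correct and follows essentially the same route as the paper: reduce the test-set average to a single expectation, peel off the noise $\epsilon$ via Assumption~\ref{ass:regressModel}, and apply the $L^2$ Pythagorean decomposition with the cross term killed by the tower property $m_{-j}(X^{-j})=\mathbb{E}[m(X)\mid X^{-j}]$. Your observation about the missing $\psi_\mathrm{TSI}$ term is exactly right---the paper's own proof in fact concludes with $\psi_\mathrm{TSI}(j,P_0)+\mathbb{E}[(m_{-j}-\widehat{m}_{-j})^2\mid\mathcal{D}_\mathrm{train}]-\mathbb{E}[(m-\widehat{m})^2\mid\mathcal{D}_\mathrm{train}]$, so the stated identity holds as written only under the conditional null.
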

Therefore, there is a bias due to the estimation of both regressors. There are two main reasons why this error does not cancel out. The first one is an \emph{optimization error}, which is more harmful in complex models (\citet{opt_error}). Indeed, it corresponds to a difference of errors from different models that have been independently optimized. Due to the variability of the optimization process, there may be many different solutions and multiple local minima, each with different errors, i.e. given $\widehat{m}_1$ and $\widehat{m}_2$ two optimizations, we do not forcely have $\e{(m(X)-\widehat{m}_1(X))^2}=\e{(m(X)-\widehat{m}_2(X))^2}$.  

The second source of error is an \emph{estimation error}. This arises because the error distributions of both models differ. From statistical learning theory, we know that the distribution typically depends on the dimension of the model. Since the models have different dimensions, they have different error distributions, which prevents the differences from canceling out. For instance, in the linear model example presented below, we show how this estimation error implies that the error remains at the convergence rate of the linear model ($O(1/n)$), rather than achieving a faster rate as for CPI. 


In summary, LOCO compares errors of two independently optimized models over different sets of features, which do not cancel because (i) they are optimized separately and (ii) their error distributions differ. In contrast, Proposition~\ref{prop:cpi-bias} shows that CPI's main error term compares the effect of estimating $\widetilde{X}^{(j)}$ \emph{within the same optimized model}.


\paragraph{Linear model.} 
In the remainder of this section, we focus on a Gaussian linear setting, which helps build intuition for CPI's advantages over LOCO. Indeed, rather than requiring only one model to work to detect a null as before, we show that when both models are accurate, the null is detected faster.
We use the regression-based sampler \eqref{eq:cpi-sampling} to obtain explicit convergence rates.

\begin{assumption}[Linear model]$y=X\beta + \epsilon$ with $\epsilon\overset{\mathrm{i.i.d.}}{\sim}\mathcal{N}(0, \sigma^2)$ and $\beta\in\mathbb{R}^p$.\label{ass:LM}
\end{assumption}
We assume that $X$ is Gaussian. Thus, there is an explicit form for the conditional distribution: $X_{j}\mid X_{-j}=x_{-j}\sim \mathcal{N}(\mu^j_\mathrm{cond}, \Sigma^j_\mathrm{cond})$ 
with $\mu^j_\mathrm{cond}:=\mu_{j}+\Sigma_{j, -j}\Sigma^{-1}_{-j, -j}(x_{-j}-\mu_{-j})$ and $\Sigma^j_\mathrm{cond}:=\Sigma_{j, j}-\Sigma_{j,-j}\Sigma^{-1}_{-j, -j}\Sigma_{-j, j}$. In particular,
$\nu_{-j}(X^{-j}):=\e{X^j|X^{-j}}$ is a linear model.

As shown in Lemma~\ref{lemma:LMX_without_j}, $y \mid X^{-j}$ is also linear. Consequently, $\widehat{m}$, $\widehat{m}_{-j}$, and $\widehat{\nu}_{-j}$ are linear, so LOCO and CPI have the same computational complexity. However, LOCO’s bias decays linearly, whereas CPI’s decays quadratically.

\begin{lemma}\label{lemm:doubl-rob-LM}
Assuming $X^j\indep y\mid X^{-j}$, linear model (\ref{ass:LM}), Gaussian input covariates, and $\widehat{\nu}_{-j}$ and $\widehat{\beta}$ trained in different samples, then $\e{\widehat{\psi}_\mathrm{CPI}(j)}=O(1/n_\mathrm{train}^2).$
\end{lemma}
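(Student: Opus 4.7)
The plan is to start from the bias expression in Proposition~\ref{prop:cpi-bias} and exploit the Gaussian linear structure to bound each term separately. Under Assumption~\ref{ass:LM} we have $\widehat{m}(x)=x\widehat{\beta}$, and since $X$ is Gaussian, $\widehat{\nu}_{-j}(x^{-j})=x^{-j}\widehat{\gamma}$ is linear as well. Because $\widetilde{X}$ and $\widetilde{X}'$ agree on every coordinate except the $j$-th,
\begin{align*}
\widehat{m}(\widetilde{X})-\widehat{m}(\widetilde{X}')=\widehat{\beta}_j\bigl(\widetilde{X}^j-\widetilde{X}'^j\bigr),
\end{align*}
and unpacking the CPI sampler \eqref{eq:cpi-sampling} yields $\widetilde{X}^j-\widetilde{X}'^j=\Delta_i-\Delta_k$ with $\Delta_\ell:=X_\ell^{-j}(\gamma-\widehat{\gamma})$, where $i,k$ denote the current and the permuted test indices. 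Under $X^j\indep y\mid X^{-j}$ combined with Assumption~\ref{ass:func_cond_equival}, the true coefficient satisfies $\beta_j=0$.

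For the squared term in Proposition~\ref{prop:cpi-bias}, a short Gaussian moment computation gives $\e{(\Delta_i-\Delta_k)^2\mid\widehat{\gamma}}=2(\gamma-\widehat{\gamma})^{\top}\Sigma_{-j,-j}(\gamma-\widehat{\gamma})$, so its conditional expectation equals $\widehat{\beta}_j^{\,2}\cdot 2(\gamma-\widehat{\gamma})^{\top}\Sigma_{-j,-j}(\gamma-\widehat{\gamma})$. Since $\widehat{\beta}$ and $\widehat{\gamma}$ are fit on independent samples, the outer expectation factorises. Standard OLS variance bounds for the Gaussian linear model, together with $\beta_j=0$ and the unbiasedness of $\widehat{\gamma}$, yield $\e{\widehat{\beta}_j^{\,2}}=O(1/n_\mathrm{train})$ and $\e{(\gamma-\widehat{\gamma})^{\top}\Sigma_{-j,-j}(\gamma-\widehat{\gamma})}=\mathrm{tr}\bigl(\Sigma_{-j,-j}\,\mathrm{Cov}(\widehat{\gamma})\bigr)=O(1/n_\mathrm{train})$, so this contribution is $O(1/n_\mathrm{train}^{2})$.

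For the cross term, I would decompose $m(\widetilde{X})-\widehat{m}(\widetilde{X})=X^{-j}(\beta^{-j}-\widehat{\beta}^{-j})-\widetilde{X}^j\widehat{\beta}_j$ (using $\beta_j=0$) and compute $\e{X_i^{-j}(\Delta_i-\Delta_k)}$ and $\e{\widetilde{X}^j(\Delta_i-\Delta_k)}$ over the test law; both come out linear in $\gamma-\widehat{\gamma}$. Taking the outer expectation over $\widehat{\gamma}$, which is independent of $\widehat{\beta}$ by the sample-splitting hypothesis and satisfies $\e{\widehat{\gamma}}=\gamma$, annihilates the cross term exactly, so it contributes $0$ and the announced $O(1/n_\mathrm{train}^{2})$ rate follows.

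The main obstacle I anticipate is the bookkeeping of the CPI permutation: the paired indices $(i,k)$ are exchangeable but not independent at finite $n_\mathrm{test}$, so some care is needed to reduce the relevant bivariate expectations to those of an independent pair (either by replacing the permutation by independent resampling at the cost of a harmless $O(1/n_\mathrm{test})$ correction, or by working directly with the permutation distribution). A secondary point is to specify the linear estimator used for $\widehat{\beta}$ and $\widehat{\gamma}$ so that $\sqrt{n_\mathrm{train}}$-consistency and unbiasedness of $\widehat{\gamma}$ are legitimate; ordinary least squares (or ridge with vanishing penalty) is the natural choice. Once these two technical points are dispatched, the remainder is a direct Gaussian moment calculation.
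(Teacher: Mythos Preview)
Your proposal is correct and follows essentially the same strategy as the paper: start from Proposition~\ref{prop:cpi-bias}, reduce the squared term to $2\widehat{\beta}_j^{\,2}\|\gamma-\widehat{\gamma}\|_{\Sigma_{-j}}^2$ via linearity, and then factorise using the independence of $\widehat{\beta}$ and $\widehat{\gamma}$ together with the $O(1/n_{\mathrm{train}})$ OLS rates for each factor. Your handling of the cross term---showing it is linear in $\gamma-\widehat{\gamma}$ and therefore has exactly zero expectation by unbiasedness of $\widehat{\gamma}$---is in fact sharper than the paper's, which merely writes the cross term as $\widehat{\beta}_j\,\Delta\beta^\top M\,\Delta\gamma_{-j}$ and declares it a negligible third-order product.
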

%
%
We note that assuming two training samples has negligible impact on this convergence result: splitting a single sample reduces the rate by a factor of 4, which remains quadratic in $n_\mathrm{train}$. Finally, for LOCO:
%
\begin{lemma}
 Assuming $X^j\indep y\mid X^{-j}$, linear model (\ref{ass:LM}) and Gaussian covariates then $\e{\widehat{\psi}_\mathrm{LOCO}(j)}=O(1/n_\mathrm{train}).$
\label{lemma:bias-LOCO-linear}
\end{lemma}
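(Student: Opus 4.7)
The plan is to start from Proposition~\ref{prop:loco-bias}, which reduces the task to computing the difference of the two expected squared errors under the null. The key simplification is that under Assumption~\ref{ass:LM} with Gaussian $X$, the conditional independence $X^j \indep y \mid X^{-j}$ forces $\beta_j = 0$, so that $m(X) = X^{-j}\beta_{-j}$ and, using Lemma~\ref{lemma:LMX_without_j}, $m_{-j}(X^{-j}) = X^{-j}\beta_{-j}$ as well. In other words, the two target regression functions coincide on the appropriate inputs, and the only source of the bias is the differing estimation error between the full $p$-dimensional OLS and the reduced $(p-1)$-dimensional OLS.

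Next I would compute the expected squared excess risk for each OLS estimator. For the full model with Gaussian design, a standard calculation gives
\begin{equation*}
\mathbb{E}\bigl[(\widehat{m}(X) - m(X))^2 \mid X_\mathrm{train}\bigr] = \sigma^2\,\mathrm{tr}\!\bigl(\Sigma_X (X_\mathrm{train}^\top X_\mathrm{train})^{-1}\bigr),
\end{equation*}
and taking expectation over the Gaussian training design using the inverse-Wishart mean $\mathbb{E}[(X_\mathrm{train}^\top X_\mathrm{train})^{-1}] = \Sigma_X^{-1}/(n_\mathrm{train}-p-1)$ yields the exact value $\sigma^2 p/(n_\mathrm{train}-p-1)$. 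Applying the same argument to the reduced model gives $\sigma^2 (p-1)/(n_\mathrm{train}-p)$, where the noise variance is unchanged because, under the null, the residual $y - m_{-j}(X^{-j}) = \epsilon$ is the same as in the full model.

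Finally, I would subtract the two expressions to obtain the bias,
\begin{equation*}
\mathbb{E}\bigl[\widehat{\psi}_\mathrm{LOCO}(j)\bigr] = \frac{\sigma^2(p-1)}{n_\mathrm{train}-p} - \frac{\sigma^2 p}{n_\mathrm{train}-p-1} = -\frac{\sigma^2(n_\mathrm{train}-1)}{(n_\mathrm{train}-p)(n_\mathrm{train}-p-1)},
\end{equation*}
which is of order $1/n_\mathrm{train}$, as claimed. This also explains the intuition in the main text: the $-1/n_\mathrm{train}$ leading term reflects the extra estimation noise contributed by including the null feature in the full regression, and since these two estimation errors come from models of different dimensions with different error distributions, they do not cancel beyond first order.

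The main obstacle is a clean justification of the inverse-Wishart moment and of the independence arguments needed to split the expectation over design, noise, and test point; care is also required to ensure $n_\mathrm{train} > p+1$ so that the moment is finite, and to handle the (negligible) edge cases. Everything else reduces to bookkeeping with the explicit Gaussian OLS formulas.
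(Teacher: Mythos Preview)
Your approach is essentially identical to the paper's: start from the bias decomposition of Proposition~\ref{prop:loco-bias} (the paper uses its linear-model specialization, Proposition~D.3), compute each expected excess risk via the inverse-Wishart mean of $(X_\mathrm{train}^\top X_\mathrm{train})^{-1}$ under a Gaussian design, and subtract. Your constants are in fact cleaner than the paper's---you correctly get $\sigma^2(p-1)/(n_\mathrm{train}-p)$ for the reduced model (dimension $p-1$, so the inverse-Wishart denominator is $n_\mathrm{train}-p$), whereas the paper writes $\sigma^4$ and $n_\mathrm{train}-p-2$; either way the conclusion $O(1/n_\mathrm{train})$ is unaffected.
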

Therefore, in this linear setting the double robustness translates as a faster convergence rate for CPI.

\section{Sobol-CPI}\label{sect:sobol-CPI}
We aim to obtain a stable estimate of TSI (\ref{def:TSI}). As noted in \cref{subsec:setting}, the link between $y$ and $X^{-j}$ can be complex, often requiring computationally intensive models. Furthermore, as discussed in \cref{subsec:double-robust}, optimization errors do not accumulate efficiently for retraining methods like LOCO. Therefore, we develop a CPI-based approach to estimate TSI. The key idea is to leverage the law of total expectation, because
\begin{align*}
    m_{-j}(X^{-j})&=\e{y\mid X^{-j}}=\e{\e{y\mid X}\mid X^{-j}}\\&=\e{m(X)\mid X^{-j}}=\e{m(\widetilde{X}^{(j)})\mid X^{-j}},
\end{align*}
 where $\widetilde{X}^{(j)}\sim \mathcal{L}(X\mid X^{-j})$.
 We propose to compute
$
1/n_\mathrm{cal} \sum_{i=1}^{n_\mathrm{cal}} \widehat{m}\left( \widetilde{x}^{'(j)}_i \right),
$
where $\{\widetilde{x}^{'(j)}_i\}_{i=1, \dots, n_\mathrm{cal}}$ are sampled from the estimated conditional distribution $P'_j$. This can be easily achieved using CPI's conditional sampling, proven valid in \cref{subsec:cond_sampl}. We thus only need to train a regressor $\widehat{\nu}_{-j}$ and add $n_\mathrm{cal}$ residuals to obtain the estimate.

This idea is not limited to regression settings. Indeed, as discussed in \citet{williamson2023general}, most Bayes predictors are functions of the conditional expectation. 
For example, under the classical 0-1 loss, the Bayes classifier is given by $\mathbb{I}_{\mathbb{E}[y|X] \geq 0.5}$. 
In this case, for the restricted model, we propose
$
\mathbb{I}_{1/n_\mathrm{cal} \sum_{i=1}^{n_\mathrm{cal}} \widehat{m}\left( \widetilde{x}^{(j)}_i \right) \geq 0.5},
$
rather than refitting another model for $\widehat{m}_{-j}$.
Then, we define the general Sobol-CPI as this TSI plug-in combination of the conditional sampling total's expectation estimate of $m_{-j}$:
\begin{definition}[Sobol-CPI] Given a coordinate $j$, a calibration set size $n_\mathrm{cal}$, a regressor $\widehat{m}$ of $y$ given $X$ and a test set $\{(X_i, y_i)\}_{i=1,\ldots n_\mathrm{test}}$ , Sobol-CPI is defined as
\begin{align*}
\hspace*{-2mm}
    \widehat{\psi}_{\mathrm{SCPI}}^j&=\frac{1}{n_{\mathrm{test}}}\sum_{i=1}^{n_{\mathrm{test}}}\ell\left(\frac{1}{n_\mathrm{cal}}\sum_{k=1}^{n_\mathrm{cal}}\widehat{m}(\widetilde{x}^{'(j)}_{i,k}),y_i\right)-\ell\left(\widehat{m}(x_i), y_i\right)
\end{align*}
where the $j$-th coordinate of $\widetilde{x}'^{(j)}_{i,k}$ is sampled from an estimated conditional distribution $P'_j$, with the remaining coordinates fixed to $x_i^{-j}$.
\end{definition}
We observe that using the law of total expectation is related to the \textit{marginalization} of the conditional SAGE value functions (cSAGEvf, \citet{covert2020}), while making explicit the fact that both the \textit{conditional density} and the \textit{conditional expectation} are unknown and must estimated. Regarding the first issue, we study in particular the sampling step from the CPI and propose an asymptotic efficiency result. We further observe that, in order to extend this result to cSAGEvf, it is necessary to specify the sampler explicitly. For the second issue, we propose a bias-correction procedure that yields an unbiased estimator without incurring additional cost.

\subsection{Asymptotic efficiency}\label{subsec:asymp_eff}
%
%


Under the assumptions of \citet{williamson2023general} (discussed in \cref{sect:proofTheo}) and an additional Lipschitz condition ensuring local robustness to small input changes, we achieve the same Sobol-CPI's nonparametric efficiency.

\begin{theorem}\label{th:asymp_eff}
Assuming additive innovation (\ref{ass:generalAssumpCondSampl}), that $m$ is Lipschitz and assumptions A1-A4 and B1-B3 in \cref{sect:proofTheo},  $\widehat{\psi}_\mathrm{SCPI}^j$ is nonparametric efficient.
\end{theorem}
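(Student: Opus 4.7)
The plan is to recast Sobol-CPI as a plug-in LOCO-type estimator with a specific choice of restricted regressor and then invoke Theorem~1 of \citet{williamson2023general}. Concretely, define the Sobol-CPI implicit restricted regressor
$$\widetilde{m}_{-j}(x^{-j}) \;:=\; \frac{1}{n_\mathrm{cal}} \sum_{k=1}^{n_\mathrm{cal}} \widehat{m}\bigl(\widetilde{x}^{'(j)}_{i,k}\bigr),$$
so that $\widehat{\psi}_\mathrm{SCPI}^j$ equals $\widehat{\psi}_\mathrm{LOCO}^j$ when the latter is computed with $\widetilde{m}_{-j}$ in place of $\widehat{m}_{-j}$. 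If $\widetilde{m}_{-j}$ converges to $m_{-j}$ in $L^2(P_0)$ at the rate required by A1--A4 and B1--B3, nonparametric efficiency follows directly from the asymptotic linearization of \citet{williamson2023general} with the efficient influence function of TSI.

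First I would decompose $\|\widetilde{m}_{-j}-m_{-j}\|_{L^2(P_0)}$ into three ingredients. \textbf{(i)} A Monte Carlo error from using $n_\mathrm{cal}$ draws: by a conditional law of large numbers applied to the (bounded, by B1) random variable $\widehat{m}(\widetilde{X}^{'(j)})$, this contributes $O_P(n_\mathrm{cal}^{-1/2})$. \textbf{(ii)} A sampler error from using $\widehat{\nu}_{-j}$ instead of $\nu_{-j}$: by additive innovation (\ref{ass:generalAssumpCondSampl}), the $j$-th coordinate of $\widetilde{X}'-\widetilde{X}$ is exactly $\widehat{\nu}_{-j}(X^{-j})-\nu_{-j}(X^{-j})$, so Lipschitzness of $m$ together with Proposition~\ref{prop:empCondSamp} controls this term by $\|\widehat{\nu}_{-j}-\nu_{-j}\|_{L^2}$. \textbf{(iii)} The estimation error $\|\widehat{m}-m\|_{L^2}$, which passes through the conditional expectation (Jensen) and is already controlled by B2 applied to the full model. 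Choosing $n_\mathrm{cal}$ to grow at least like $\sqrt{n}$ neutralizes (i), and (ii)--(iii) inherit the $o(n^{-1/4})$ nonparametric rates assumed by B2 on $\widehat{\nu}_{-j}$ and $\widehat{m}$.

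Once $\widetilde{m}_{-j}$ satisfies the same convergence assumptions as a directly fitted $\widehat{m}_{-j}$, I would verify the remaining hypotheses of \citet{williamson2023general} one by one: A1--A4 are conditions on the loss and the parameter functional (they depend on $P_0$, $m$, $m_{-j}$ rather than on the estimators and are taken as assumptions), while B1 and B3 are preserved because $\widetilde{m}_{-j}$ is a Lipschitz functional of $\widehat{m}$ and $\widehat{\nu}_{-j}$ and hence stays in the same function class up to constants. Their Theorem~1 then yields the asymptotic linearization $\sqrt{n_\mathrm{test}}(\widehat{\psi}_\mathrm{SCPI}^j-\psi_\mathrm{TSI})=\tfrac{1}{\sqrt{n_\mathrm{test}}}\sum_i \phi_0(X_i,y_i)+o_P(1)$ with $\phi_0$ the efficient influence function, and efficiency follows.

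The main obstacle is step (ii): converting the $\mathcal{W}_2$ convergence of Proposition~\ref{prop:empCondSamp} into a pointwise-in-$x^{-j}$, $L^2$-integrable rate for $\widetilde{m}_{-j}-m_{-j}$. The additive-innovation structure is essential here because it lets one couple $\widetilde{X}'$ and $\widetilde{X}$ at the same $X^{-j}$ and write their difference as a single Lipschitz functional of $\widehat{\nu}_{-j}-\nu_{-j}$, avoiding any uniform-in-$x$ control on $\widehat{m}$ (which would be too strong under B1). A subtlety is that the Lipschitz constant for $\widehat{m}$ must be bounded in probability uniformly in $n$, which either follows from A1's smoothness restriction on the working model class or must be imposed as the additional Lipschitz condition stated in the theorem.
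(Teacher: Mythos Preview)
Your proposal is essentially the paper's proof: recast $\widehat{\psi}_\mathrm{SCPI}^j$ as a LOCO-type plug-in with implicit restricted regressor $\widetilde{m}_{-j}$, verify that $\widetilde{m}_{-j}$ converges to $m_{-j}$ at rate $o_P(n^{-1/4})$ via a three-term decomposition (Monte Carlo, sampler error controlled by Lipschitzness plus B3, estimation error of $\widehat{m}$ via B1), and then apply Theorem~2 of \citet{williamson2023general}.

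Two small points. First, your assumption labels are off: the $o_P(n^{-1/4})$ rate on $\widehat{m}$ is B1 (not B2), and on $\widehat{\nu}_{-j}$ it is B3 (not B2). Second, the ``main obstacle'' you flag about needing the Lipschitz constant of $\widehat{m}$ to be bounded in probability is unnecessary if you order the decomposition as the paper does: first pass from $\widehat{m}(\widetilde{X}')$ to $m(\widetilde{X}')$ using B1 directly, and only then from $m(\widetilde{X}')$ to $m(\widetilde{X})$ using Lipschitzness of the \emph{true} $m$. This way the Lipschitz hypothesis is invoked only on $m$, exactly as stated in the theorem, and no regularity of $\widehat{m}$ beyond B1 is required.
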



Among the assumptions, those on the loss function have been shown by \citet{williamson2023general} to hold for standard losses, such as the quadratic loss or classification accuracy. We also require the usual $O(n^{-1/4})$ convergence rate for $\widehat{m}$ and $\widehat{\nu}_{-j}$, which is standard in semiparametric inference. Note that this result holds for any conditional sampler $P'_j$ such that $\|\widetilde{X}'^{(j)}-\widetilde{X}^{(j)}\|=O_P(n^{-1/4})$. 


%
%
\subsection{Fixing $n_\mathrm{cal}$ for the quadratic loss $\ell_2$}\label{subsec:fix-n-cal}
In practice, we need to decide on a finite value for $n_\mathrm{cal}$. Doing so introduces a trade-off between \textit{variable selection} and \textit{variable importance}. Indeed, as discussed in \cref{subsec:exp-n-cal}, with a small $n_\mathrm{cal}$, we recover the double robustness of CPI, achieving better results for variable selection but losing nonparametric efficiency, leading to slightly worse results for variable importance. This is because the optimality assumption (Assumption A1 in \cref{sect:proofTheo}), which controls the first-order bias due to the need to estimate $m$, no longer holds.
In any case, fixing $n_\mathrm{cal}$, as is commonly done in practice for SAGE \cite{covert}, induces a bias:

\begin{proposition}[Bias of $\widehat{\psi}_\mathrm{SCPI}$]\label{prop:fix_n_cal} For $n_\mathrm{cal}<\infty$, under additive noise (\ref{ass:generalAssumpCondSampl}) and consistency of $\widehat{m}$ and $\widehat{\nu}_{-j}$, then $$\widehat{\psi}_{\mathrm{SCPI}}^j\xrightarrow{n_\mathrm{train}, n_\mathrm{test}\to\infty}\left(1+\frac{1}{n_{\mathrm{cal}}}\right)\psi_\mathrm{TSI}(j, P_0).$$
\end{proposition}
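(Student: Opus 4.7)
The plan is to take limits sequentially, $n_\mathrm{test}\to\infty$ then $n_\mathrm{train}\to\infty$, so as to reduce the statement to an algebraic identity for the population quantity, and then compute that identity by conditioning on $X^{-j}$.

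First, by the law of large numbers applied to the i.i.d. test sample (with $\widehat{m}$ and $\widehat{\nu}_{-j}$ treated as fixed), $\widehat{\psi}_\mathrm{SCPI}^j$ converges a.s.\ as $n_\mathrm{test}\to\infty$ to
$$\mathbb{E}\!\left[\Big(\tfrac{1}{n_\mathrm{cal}}\sum_{k=1}^{n_\mathrm{cal}}\widehat{m}(\widetilde{X}'^{(j)}_k)-y\Big)^{2}\right]-\mathbb{E}\!\left[(\widehat{m}(X)-y)^{2}\right].$$
Then, as $n_\mathrm{train}\to\infty$, consistency of $\widehat{m}\to m$ in $L^{2}$, together with Proposition~\ref{prop:empCondSamp} (which, under additive innovation, yields $\mathcal{W}_2(P'_j,P^\star_j)\to 0$, so that each $\widetilde{X}'^{(j)}_k$ converges in distribution to an independent draw from $X^{j}\mid X^{-j}$ stacked with $X^{-j}$), allows one to pass to the limit inside each squared-loss expectation by a standard uniform-integrability / continuous mapping argument. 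The limit becomes
$$\mathbb{E}\!\left[\Big(\tfrac{1}{n_\mathrm{cal}}\sum_{k=1}^{n_\mathrm{cal}}m(\widetilde{X}^{(j)}_k)-y\Big)^{2}\right]-\mathbb{E}\!\left[(m(X)-y)^{2}\right].$$

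Next, write $\bar m := \tfrac{1}{n_\mathrm{cal}}\sum_k m(\widetilde{X}^{(j)}_k)$, decompose $\bar m-y = (\bar m-m(X))+(m(X)-y)$, and expand the square. The cross term vanishes by the tower property: conditional on $X$, the sampler $\widetilde{X}^{(j)}_k$ is independent of $y$, and $\mathbb{E}[y\mid X]=m(X)$, so $\mathbb{E}[(\bar m-m(X))(m(X)-y)]=0$. Hence the difference reduces to $\mathbb{E}[(\bar m-m(X))^{2}]$.

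Finally, under additive innovation, the residual-based conditional sampling makes $X^{j},\widetilde{X}^{(j)j}_{1},\ldots,\widetilde{X}^{(j)j}_{n_\mathrm{cal}}$ i.i.d.\ draws from the law of $X^{j}\mid X^{-j}$ given $X^{-j}$. Setting $W:=m(X)-m_{-j}(X^{-j})$ and $Z_k:=m(\widetilde{X}^{(j)}_k)-m_{-j}(X^{-j})$, the $W,Z_1,\ldots,Z_{n_\mathrm{cal}}$ are conditionally i.i.d.\ given $X^{-j}$ and centered. A direct variance computation then gives
$$\mathbb{E}\!\left[(\bar m-m(X))^{2}\mid X^{-j}\right]=\tfrac{1}{n_\mathrm{cal}}\mathrm{Var}(Z_1\mid X^{-j})+\mathrm{Var}(W\mid X^{-j})=\Big(1+\tfrac{1}{n_\mathrm{cal}}\Big)\mathrm{Var}(W\mid X^{-j}).$$
Taking expectations, and noting that expanding the TSI definition and using $\mathbb{E}[y-m(X)\mid X]=0$ gives $\mathbb{E}[W^{2}]=\psi_\mathrm{TSI}(j,P_0)$, yields the claimed limit $\bigl(1+\tfrac{1}{n_\mathrm{cal}}\bigr)\psi_\mathrm{TSI}(j,P_0)$.

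The main obstacle is the limit-exchange step from $\widehat{m},\widehat{\nu}_{-j}$ to $m,\nu_{-j}$: converting the $L^{2}$ consistency of $\widehat{m}$ and the $\mathcal{W}_2$ consistency of the conditional sampler into convergence of the squared-loss expectations requires a uniform integrability (or boundedness / moment) argument, ideally invoking the Lipschitz/regularity hypotheses already used in Theorem~\ref{th:asymp_eff}. The rest is essentially bookkeeping with conditional expectations.
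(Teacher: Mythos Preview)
Your proposal is correct and follows the same overall strategy as the paper: pass to the population limit via the law of large numbers and consistency of $\widehat m,\widehat\nu_{-j}$, then reduce the squared-loss difference to $\mathbb{E}[(\bar m-m(X))^2]$ and compute it by conditioning on $X^{-j}$. The paper's proof is equally informal about the limit-exchange step you flag as the main obstacle, simply invoking ``consistency of the estimates, continuous mapping theorem, and finally the Law of Large Numbers.''

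The one genuine difference is in the final algebraic computation. The paper expands $\bigl(m(X)-\tfrac{1}{n_\mathrm{cal}}\sum_i m(\widetilde X_i)\bigr)^2$ as a double sum, treats diagonal and off-diagonal terms separately, and shows by further conditioning that each off-diagonal term $\mathbb{E}[(m(X)-m(\widetilde X_i))(m(X)-m(\widetilde X_k))]$ equals $\psi_\mathrm{TSI}$ while each diagonal term equals $2\psi_\mathrm{TSI}$, then sums. Your route---centering at $m_{-j}(X^{-j})$ so that $W,Z_1,\ldots,Z_{n_\mathrm{cal}}$ are conditionally i.i.d.\ and centered, and reading off $\mathrm{Var}(\bar Z-W\mid X^{-j})=(1+\tfrac{1}{n_\mathrm{cal}})\mathrm{Var}(W\mid X^{-j})$---is a cleaner packaging of the same computation: it avoids tracking diagonal versus off-diagonal contributions and makes the $(1+1/n_\mathrm{cal})$ factor transparent as a variance identity. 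The paper's expansion, on the other hand, yields as a by-product the explicit identity $\mathbb{E}[(m(X)-m(\widetilde X))^2]=2\psi_\mathrm{TSI}$, which connects directly to the $n_\mathrm{cal}=1$ case (CPI divided by~2).
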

This bias can be corrected by scaling by $(1+1/n_\mathrm{cal})^{-1}$:
\begin{definition}[Sobol-CPI($n_\mathrm{cal}$)] Given a coordinate $j$, a fixed $n_\mathrm{cal}$, a regressor $\widehat{m}$ of $y$ given $X$ and a test set $\{(X_i, y_i)\}_{i=1,\ldots n_\mathrm{test}}$ , Sobol-CPI($n_\mathrm{cal}$) is defined as
\begin{align*}
    \widehat{\psi}&_{\mathrm{SCPI}(n_\mathrm{cal})}^j=\frac{n_\mathrm{cal}}{n_\mathrm{cal}+1}\frac{1}{n_{\mathrm{test}}}*\\&\sum_{i=1}^{n_{\mathrm{test}}}\left[\left(\frac{1}{n_\mathrm{cal}}\sum_{k=1}^{n_\mathrm{cal}}\widehat{m}(\widetilde{x}^{'(j)}_{i,k})-y_i\right)^2-\left(\widehat{m}(x_i)- y_i\right)^2\right],
\end{align*}
where the $j$-th coordinate of $\widetilde{x}'^{(j)}_{i,k}$ is sampled from an estimated conditional distribution $P'_j$, with the remaining coordinates fixed to $x_i^{-j}$.
%
\end{definition}
In particular, by taking $n_\mathrm{cal}=1$, we recover the standard CPI, but divided by 2. With this simple correction, the method works effectively for variable selection, as we directly recover double robustness. Moreover, it converges to TSI, even though it is not efficient (see \cref{subsec:exp-n-cal}). This result also corrects Theorem 2 from \citet{Hooker2021}, which claimed that Dropped and Conditional Variable Importance coincide, highlighting the gap in the literature between the two approaches.

\section{Confidence intervals and Inference}\label{subsec:inference}
As stated in Theorem \ref{th:asymp_eff}, there is no need for a one-step estimate to correct any first-order bias. 
The same holds for LOCO, which does not require such corrections when estimated as a plug-in estimate in the difference of generalized ANOVA \cite{williamson2021} or as a difference of predictiveness measures more generally \cite{williamson2023general}. 
Whenever the importance is not null, it is possible to construct confidence intervals using the influence functions. 
Indeed, the variance is given by $0 < \tau_j^2 := \mathbb{E}[\varphi_j^2(z)] < \infty$, where $\varphi_j$ is the influence function of $\psi_\mathrm{TSI}(j, P_0)$. 
Therefore, it is possible to estimate it using a plug-in estimate. 
Moreover, as shown in Appendix \ref{sect:MSE_if}, when taking the MSE as the predictiveness measure, the variance estimated with this plug-in method matches the natural empirical variance. 

Nevertheless, under the null hypothesis, the influence function vanishes, preventing a Gaussian asymptotic distribution. This poses a challenge, as it hinders variable selection with direct statistical guarantees—an essential component of reliable scientific discovery.
This issue is the same as that faced by LOCO and Shapley due to the quadratic functionals \cite{locoVSShapley}.

To address this problem, \citet{williamson2023general} noted that even if the influence function of LOCO vanishes, excluding extreme cases, the influence functions of the predictiveness measures with and without the covariate do not vanish. 
They attempted to leverage this observation by computing each predictiveness measure on different data splits. 
However, as observed in numerical experiments in \cref{sect:exps}, this is ineffective because it increases both variability and bias, resulting in a loss of power and poor estimates.


Other alternatives include inflating the confidence interval by an additive term of the order $O_P(\sqrt{n})$ (\citet{Dai2024,locoVSShapley}).
Validity can then be obtained using Chebyshev's inequality, resulting in a confidence interval of the form $(-\infty, \widehat{\psi}_n + z_\alpha \mathrm{se}_n + c / \sqrt{n}]$, where $\mathrm{se}_n$ is the empirical standard error and $c$ is any constant. 
In practice, \citet{locoVSShapley} takes $c$ as the standard error of the output. 

We first propose a similar variance correction improving the rates using our previous results, and then introduce a finite-sample Wilcoxon test that also accounts for the estimation of the conditional distribution.

\paragraph{Variance correction:} Using \cref{th:asymp_eff}, we establish the consistency (power tending to $1$) of this conditional hypothesis test with the additive correction (see Appendix~\ref{app:inf_lin}). Moreover, using Lemma~\ref{lemm:doubl-rob-LM}, we propose applying Markov's inequality to construct a more powerful test, achieving a rate of \( c/n^{\gamma} \) for \( \gamma < 2 \) instead of the standard \( c/\sqrt{n} \) (Appendix \ref{app:inf_lin}). The limitations regarding the extent of interval expansion are discussed in Appendix~\ref{subsec:app_inf}, both for linear and nonlinear settings.
%

\paragraph{Nonparametric tests:} Variance-correction procedures rely on asymptotic convergence rates, which are typically not tight and do not provide finite-sample guarantees. To address this, in addition to the corrections motivated by our convergence results from previous sections, as detailed in Algorithm~\ref{alg:scpi_wcx}, we propose using a paired nonparametric test, such as the Wilcoxon or Sign test, between
\[
        \left\{l\bigl(\widehat{m}(X_{i}), y_i\bigr)\right\}_i^{n_\mathrm{test}}
        \text{ and }
        \left\{l\bigl(\widehat{m}(\widetilde{X}'^{(j)}_{i}), y_i\bigr)\right\}_i^{n_\mathrm{test}}.
    \]
This yields a p-value $p'$. Any conditional sampler may be used; if the theoretical conditional was available, this would provide exact finite-sample type-I error. Since the conditional must be estimated in practice, we provide a bound on the error based on the Total Variation distance $d_\mathrm{TV}$ between the estimated and true conditional distributions. For more details on this distance, see e.g.  \citet{Berrett}.
\begin{theorem}\label{th:wcx_TV} Let $P^\star_j$ denote the conditional distribution $\mathcal{L}(X^j \mid X^{-j})$, estimated by any $P'_j$. Then, for any level $\alpha \in [0,1]$, the p-value $p'$ produced by Algorithm~\ref{alg:SCPI_wcx} satisfies
\begin{align*}
\mathbb{P}_{\mathcal{H}_0}(&p'\leq \alpha\mid \{X^{-j}_i, y_i\}_i^{n_\mathrm{test}}, \mathcal{D}_\mathrm{train})\\&\leq \alpha +d_\mathrm{TV}(P'_j(\cdot\mid X^{-j}), P^\star_j(\cdot\mid X^{-j})).    
\end{align*}
\end{theorem}
%
%
\section{Experiments}\label{sect:exps}
%
We study both simulated and real-data settings. In the former, the TSI can be computed explicitly (see \cref{sect:expl_TSI}). All simulations are run over at least $50$ repetitions. The code is available at
\url{https://github.com/AngelReyero/Sobol-CPI}.


All the proposed theory applies to any conditional sampler. To illustrate this, we implement Sobol-CPI with the regression sampler (\ref{subsec:cond_sampl}), which we denote by \texttt{Sobol-CPI} and with a Gaussian sampler from the \texttt{fippy} library: \texttt{CFI} \cite{Strobl2008} and the conditional SAGE value function (\texttt{scSAGEvf}, \citet{covert}). These correspond exactly to \texttt{Sobol-CPI(1)} and \texttt{Sobol-CPI(50)}, respectively, but without the bias correction from Proposition~\ref{prop:fix_n_cal}, which we have added. Thus, they are equivalent to \texttt{Sobol-CPI} with a different sampler.  We also compare to \texttt{LOCO-W} from \citet{williamson2023general}, and \texttt{LOCO-HD} from \citet{locoVSShapley}. Our focus is on bias in estimating important (efficiency) and non-important covariates (double robustness), as well as on enabling powerful, type-I-error-controlled variable selection. For a broader discussion on how $n_\mathrm{cal}$ balances nonparametric efficiency and double robustness, see \cref{subsec:exp-n-cal}.

\paragraph{Complex learners:} in Figure \ref{fig:comp-learner} we study a nonlinear setting similar to \citet{scornet2022mda}: $ y = X_0 X_1 \mathbb{I}_{X_2 > 0} + 2 X_3 X_4 \mathbb{I}_{X_2 < 0} $, with $X \sim \mathcal{N}(\mu, \Sigma) $, $ \Sigma_{i,j} = 0.6^{|i-j|}$, $p = 50 $, and $\mu = \mathbf{0}$. 
We use Lasso for $\widehat{\nu}_{-j}$ and a SuperLearner \cite{van2007super} of Random Forests, Lasso, Gradient Boosting, and SVM for $\widehat{m}$ and $\widehat{m}_{-j}$. 
For computational reasons, in the data-splitting \texttt{LOCO-W}, we use only Gradient Boosting. We apply Sobol-CPI with $n_\mathrm{cal}=1$ and $100$.

In this setting, since $X$ is Gaussian, Sobol-CPI with a Gaussian sampler (\texttt{CFI} and \texttt{scSAGEvf}) performs very well. Even using a complex model to estimate $\widehat{m}_{-j}$ but a simpler one for $\widehat{\nu}_{-j}$, Sobol-CPI not only achieves more computationally efficient estimates but also outperforms LOCO.  

From the first plot, perturbation (\texttt{Sobol-CPI(1)}/\texttt{CFI}), removal (\texttt{LOCO-W}/\texttt{HD}), and marginalization (\texttt{scSAGEvf}/ \texttt{Sobol-CPI(100)}), converge to the TSI, justifying their comparison and the study of their statistical properties.  

From the second plot, which highlights a null feature, and the fourth plot, where all null-feature importances are aggregated, the double robustness of Sobol-CPI is evident: refitting-based methods exhibit higher variability. This plot also shows \texttt{PFI}, which is omitted from the first plot since it does not estimate TSI. \texttt{PFI} correctly assigns no importance to the null features (second and fourth plots) and correctly identifies important features (third plot). These results shows that, even if the sampler is misspecified, the asymptotic relevance assumption (\ref{ass:asymp_relevance}) still holds. 
%
%
%
%
\begin{figure}
    \centering
    \begin{minipage}{0.5\textwidth}
        \includegraphics[width=\textwidth]{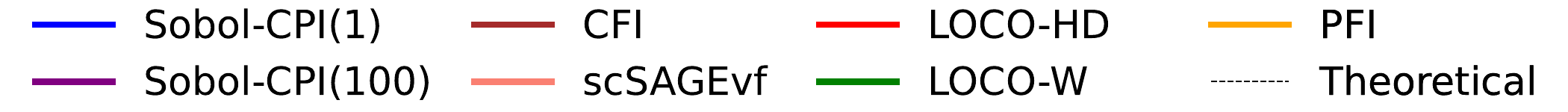}
    \end{minipage}
    \makebox[\textwidth][l]{%
    \hspace{-0.5em}%
    \begin{minipage}{0.5\textwidth}
        \includegraphics[width=\textwidth, height=0.27\textwidth]{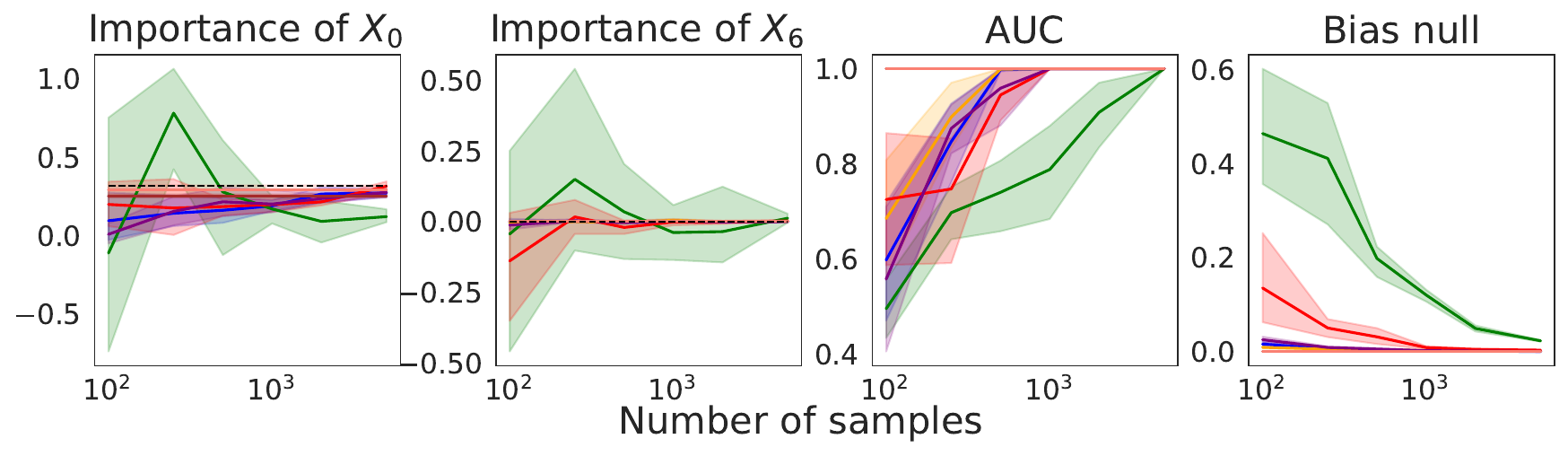}
    \end{minipage}
    }
    \caption{\textbf{Double robustness for complex learners:} left to right: TSI estimates for an important covariate ($X_0$) and a null covariate ($X_6$); AUC for an importance-based variable selection; bias for null covariates.
    }
    \label{fig:comp-learner}
\end{figure}
\paragraph{Inference:} we study a linear setting with important covariates uniformly sampled with sparsity $0.25$, $X \sim \mathcal{N}(\mu, \Sigma) $, where $ \Sigma_{i,j} = 0.6^{|i-j|}$, $p = 100 $, and $\mu = \mathbf{0}$.

As discussed in  \cref{subsec:inference}, to test the null hypothesis, a correction is necessary to address the variance decrease under the null hypothesis. 
%
%
%
%
In \cref{fig:inf}, we present the power with a linear correction term. 
Other corrections, along with analyses of type-I error, computation time, and additional settings (e.g., different correlations and polynomial settings), are discussed in \cref{subsec:app_inf}. 
%
Note that \texttt{LOCO-W} lacks power and requires a large sample size for type-I error control. 
In addition, Sobol-CPI also reduces the bias for the non-null covariates and leverages its double robustness property to achieve the largest power. 
Finally,  the type-I error is controlled at the target level $0.05$ across all the procedures but the data-splitting version of \citet{williamson2023general}. 
\begin{figure}[htbp]
    \centering
     \centering
    \begin{minipage}{0.5\textwidth}
        \includegraphics[width=\textwidth]{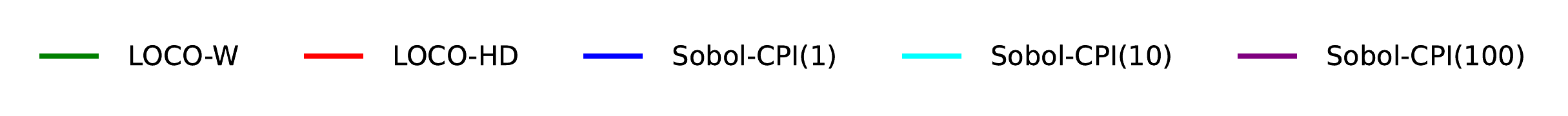}
    \end{minipage}
    \makebox[\textwidth][l]{%
    \hspace{-0.5em}%
    \begin{minipage}{0.5\textwidth}
        \includegraphics[width=\textwidth, height=0.27\textwidth]{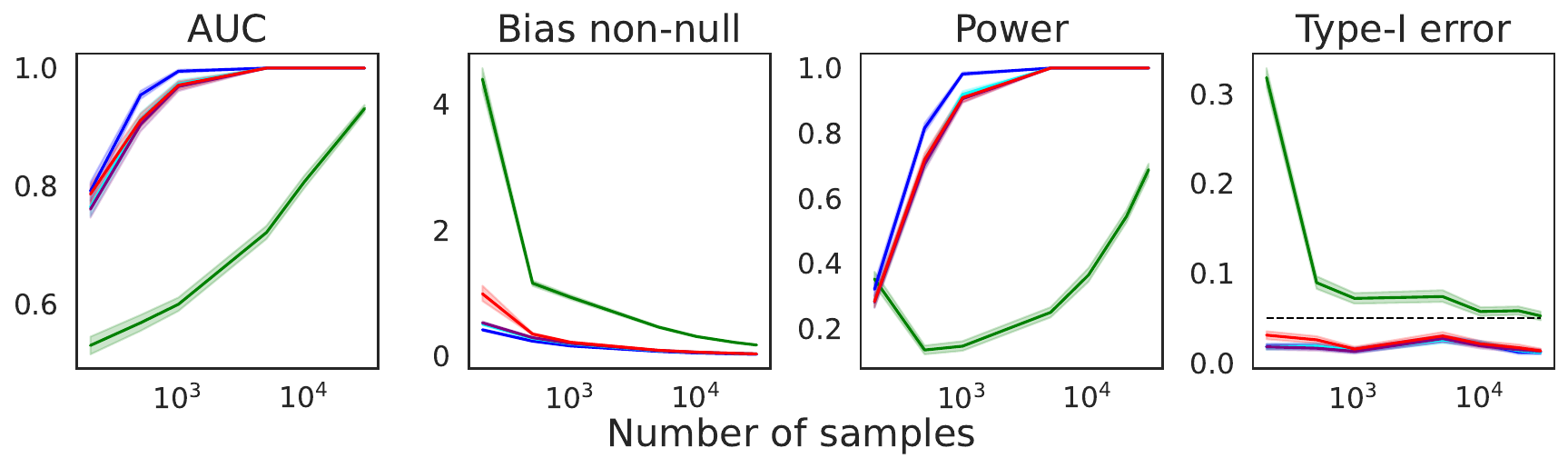}
    \end{minipage}
    }    
    \caption{\textbf{Statistical Inference on variable importance} in a linear setting: AUC for variable selection accuracy, bias in non-null TSI estimation, power and  type-I error. Sobol-CPI(1) provides the most powerful test. Using the $O(1/n)$-corrected variance, the type-I error is controlled.}
    \label{fig:inf}
\end{figure}%
\paragraph{Real data:} we evaluate several standard ML models and datasets (see Section~\ref{app:datasets}); since conclusions were consistent, we report additional results in Appendix~\ref{app:real_data}. Here we present the Wisconsin Breast Cancer (WBDC) dataset with a SuperLearner. As the set of true important features is unknown in real data, we add an artificial null feature constructed as a noisy function of the original features, and vary its correlation with them (x-axis). This allows us to estimate null importance and type-I error on the artificial feature, while performing discoveries on the true inputs.  

In Figure \ref{fig:real_data} we compare the $p$-value from \texttt{LOCO-W}, Sobol-CPI(1) with the Wilcoxon test, and the other procedures with a linear correction. This correction is conservative and yields no discoveries, whereas Sobol-CPI(1)-Wilcoxon \texttt{SCPI(1)-wcx} is powerful while still controlling the error. Also, observe that the double robustness holds in practice.

\begin{figure}
    \centering
     \centering
    \begin{minipage}{0.5\textwidth}
        \includegraphics[width=\textwidth]{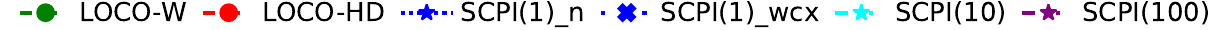}
    \end{minipage}
    \makebox[\textwidth][l]{%
    \hspace{-0.5em}%
    \begin{minipage}{0.51\textwidth}
        \includegraphics[width=\textwidth, height=0.27\textwidth]{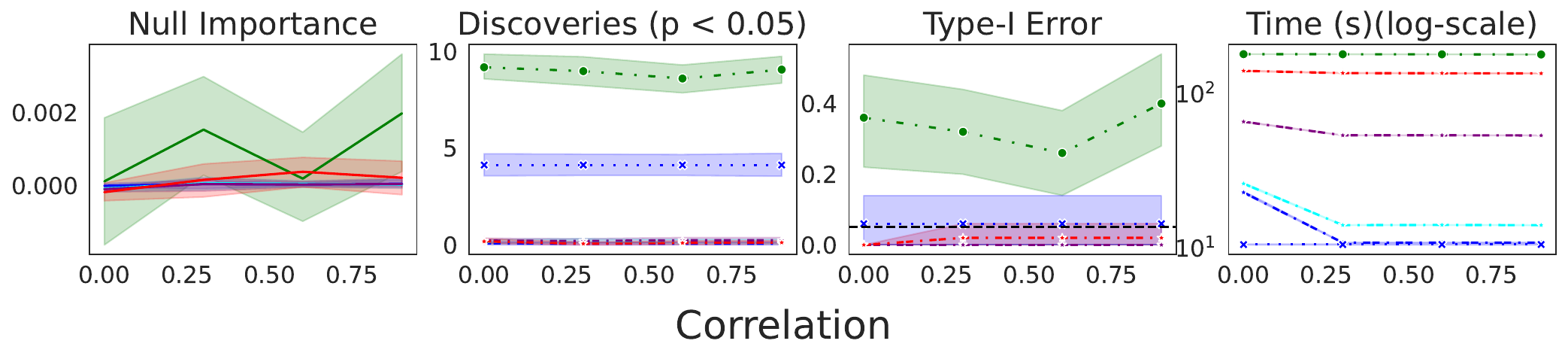}
    \end{minipage}
    }
    \caption{\textbf{Double robustness and inference on real data:} Using WBDC dataset with an added artificial null feature of varying correlation (x-axis). SCPI-based methods assign no importance to the null. With Wilcoxon test (Alg.~\ref{alg:SCPI_wcx}), discoveries are made while controlling the error and remaining computationally efficient.}
    \label{fig:real_data}
\end{figure}%
\section{Conclusion}


In this article, we revisited Conditional Feature Importance (CFI). We first studied the widely used regression-based estimator of the conditional distribution. We then analyzed both the null and alternative regimes. To explain the strong empirical performance of CFI in correctly identifying null features, we uncover a \textbf{double robustness} property: the method benefits jointly from the conditional sampler and the predictive model. Asymptotically, the sampler draws from the input distribution, so the two sources of error compensate by sharing the same distribution, while the model typically induces implicit variable selection during optimization.

Under the alternative regime, we modify CFI and introduce Sobol-CFI, clarifying its population target and proving \textbf{nonparametric efficiency}. We further address bias by proposing a correction that establishes a unified link between LOCO (refitting), CFI (perturbation), and SAGE value functions (marginalization). Finally, we show how to perform valid \textbf{inference} within this framework. Overall, CFI emerges as a powerful, statistically controlled variable importance measure, and this work significantly advances the understanding of the effect of conditional sampler estimation.


\section*{Impact Statement}

This paper presents work whose goal is to advance the field of Machine
Learning. There are many potential societal consequences of our work, none
which we feel must be specifically highlighted here.


\bibliography{biblio}
\bibliographystyle{icml2026}

\newpage
\appendix
\onecolumn

\section{Notation Glossary}\label{app:glossary}

\begin{longtable}{p{4cm} p{12cm}}
\textbf{Symbol} & \textbf{Description} \\
\hline
$X\in \mathbb{R}^p$ & Input\\
$X^j\in \mathbb{R}$ & $j$-th input covariate\\
$X^{-j}\in \mathbb{R}^{p-1}$ & Input with the $j$-th covariate excluded\\
$y\in \mathbb{R}$ & Output\\
$x_i$ & $i$-th individual\\
$x_i^{(j)}$ & $i$-th individual with permuted $j$-th covariate\\
$m(X)$ (resp. $m_{-j}(X^{-j})$) & $\e{y\mid X}$ (resp. $\e{y\mid X^{-j}}$)\\
$\widehat{m}$ (resp. $\widehat{m}_{-j}$) & Estimation of $m$ (resp. of $m_{-j}$) \\
$\nu_{-j}(X^{-j})$ & $\e{X^j\mid X^{-j}}$\\
$\widehat{\nu}_{-j}$ & Estimation of $\nu_{-j}$\\
$P^\star_j$ & Conditional distribution of $X^j$ given $X^{-j}$\\
$P'_j$ & Estimated conditional distribution of $X^j$ given $X^{-j}$\\
$\widetilde{X}^{(j)}$ & Random variable drawn according to $P^\star_j$\\
$\widetilde{X}^{'(j)}$ & Random variable drawn according to $P'_j$\\
$\widetilde{x}^{'(j)}_i$ & $i$-th observation of $\widetilde{X}^{'(j)}$\\
$\widetilde{x}^{'(j)}_{i,k}$ & $i$-th observation with the added residual from $x_k$\\
$\ell$ & Loss function\\
$\mathcal{W}_2$ & 2-Wasserstein distance\\
$\mathcal{F}$ & Generic space of functions\\
$\overset{\mathcal{L}}{\to}$ & Convergence in law\\
$\mathcal{D}_\mathrm{train}$ & Train data set\\
$n_\mathrm{train}$ & Size of train set\\
$n_\mathrm{test}$ & Size of test set\\
$n_\mathrm{cal}$ & Size of calibration set\\
$O(R_n)$ & Bounded at a rate of $R_n$. 
\end{longtable}

The superscript $(j)$ is omitted to avoid index overload when $j$ can be inferred from the context. Also, some notation can be combined; for instance, $\widetilde{x}^{'(j),l}_{i,k}$ denotes the $l$-th coordinate of $\widetilde{x}^{'(j)}_{i,k}$.

\section{Permutation Feature Importance (PFI)}\label{sect:PFI}

We denote by $x_{i}^{(j)}$ the vector where all coordinates come from the $i$-th observation, except for the $j$-th coordinate, which is taken from another observation, as the column has been completely shuffled. Thus, it is given by:

\begin{definition}[PFI] Given a coordinate $j$, a loss $\ell$, a regressor $\widehat{m}$ of $y$ given $X$ and a test set $(X_i, y_i)_{i=1,\ldots n_\mathrm{test}}$, PFI is defined as
\begin{align*}
    \widehat{\psi}_\mathrm{PFI}^j=\frac{1}{n_{\mathrm{test}}}\sum_{i=1}^{n_{\mathrm{test}}}\ell\left(\widehat{m}(x_{i}^{(j)}),y_i\right)-\ell\left(\widehat{m}(x_{i}),y_i\right),
\end{align*}
where the $j$-th coordinate is permuted.
\end{definition}

\section{Conditional sampling proofs}
\subsection{Proof of Lemma \ref{lemma:GaussianAss}}
We begin by decomposing each column as $$X^j=\e{X^j\middle|X^{-j}}+\left(X^j-\e{X^j\middle|X^{-j}}\right).$$ 
 We can denote $\nu_{-j}(X^{-j}):=\e{X^j\middle|X^{-j}}$ and $\epsilon_j:=X^j-\e{X^j\middle|X^{-j}}$. First, note that they are both Gaussian as $\e{X^j|X^{-j}}=\mu_{j}+\Sigma_{j, -j}\Sigma^{-1}_{-j, -j}(X^{-j}-\mu_{-j})$, therefore they are linear combinations of coordinates of a Gaussian vector.
 
 Then, note that $\epsilon_j$ is centered. Finally, to see that they are independent, as they are both Gaussian variables, we just need to prove that their covariance is null:
 \begin{align*}
     \mathbb{E}&\left[\left(\nu_{-j}\left(X^{-j}\right)-\e{X^{j}}\right)\left(X^j-\e{X^j\middle|X^{-j}}\right)\right]\\&=
     \e{\e{\left(\nu_{-j}\left(X^{-j}\right)-\e{X^{j}}\right)\left(X^j-\e{X^j\middle|X^{-j}}\right)\middle|X^{-j}}}\\&=\e{\left(\nu_{-j}\left(X^{-j}\right)-\e{X^{j}}\right)\e{X^j-\e{X^j\middle|X^{-j}}\middle|X^{-j}}}=
     0.
 \end{align*}

\subsection{Proof of Proposition \ref{prop:empCondSamp}}

\begin{proof}
   In this proof, for simplicity of notation, instead of referring to two random variables $X_i$ and $X_k$, we will denote them by $X$ and $X'$. We will first observe that, under \cref{ass:generalAssumpCondSampl}, $P^\star_j$ can be decomposed as the theoretical counterpart of \eqref{eq:cpi-sampling}. Then, using the consistency of $\widehat{\nu}_{-j}$, we show that the Wasserstein distance vanishes.
     We first observe that for $X'\overset{\mathrm{i.i.d}}{\sim}X$ we have that $X'^j-\nu_{-j}\left(X'^{-j}\right)\overset{\mathrm{i.i.d}}{\sim}\epsilon_j$.
 
 Also, we have that $X^j|\left(X^{-j}=x^{-j}\right)= \epsilon_j+\nu_{-j}(x^{-j})\overset{{\mathrm{i.i.d}}}{\sim}\left(X'^j-\nu_{-j}\left(X'^{-j}\right)\right)+\nu_{-j}(x^{-j})$, which is exactly the theoretical CPI sampling step, i.e. first we compute the conditional expectation $\nu_{-j}(x^{-j})$ with the regressor $\nu_{-j}$ and then we add a permuted residual $\left(X'^j-\nu_{-j}\left(X'^{-j}\right)\right)$. Then, for $\widetilde{X}^{(j)}\sim P^\star_j$, we have that $X^j\overset{\mathrm{i.i.d.}}{\sim}\widetilde{X}^{(j)}|X^{-j}$. We obviously have $X^{-j}=\widetilde{X}^{(j)-j}$. Finally, to ensure that $\widetilde{X}^{(j)} \indep y \mid X^{-j}$, we can rely on the fact that, similar to the construction of knockoffs in \citet{candes2017panninggoldmodelxknockoffs}, $\widetilde{X}^{(j)}$ is constructed without using $y$.

    To show that the estimated distribution converges to the theoretical one, first recall that the usual 2-Wasserstein distance is given by 
    $$\left(\underset{P_\theta\in \Theta (\mu, \nu)}{\mathrm{inf}}\int \|x-y\|^2\mathrm{d}P_\theta (\mathrm{dx,dy})\right)^{\frac{1}{2}},$$
    where $\Theta(\mu, \nu)$ is the set of distributions with marginals $\mu$ and $\nu$. 
    
    We have that $P^\star_j$ has the same distribution as $\e{X^j\middle|X^{-j}}+\left(X'^{j}-\e{X'^j|X'^{-j}}\right)$ and the empirical couterpart is given by $P'_j:=\widehat{\nu}_{-j}\left(X^{-j}\right)+X'^{j}-\widehat{\nu}_{-j}\left(X'^{-j}\right)$. Now, we bound the distance between them:
    
    \begin{align*}
        W_2(P^\star_j, P'_j)&=\left(\underset{P_\theta\in\Theta(P^\star_j, P'_j)}{\mathrm{inf}}\int_{\mathbb{R}^{2p}\times \mathbb{R}^{2p}}(x-y)^2P_\theta(\mathrm{dx, dy})\right)^{\frac{1}{2}}\\
        &\leq \left(\int_{\mathbb{R}^{2p}}\left(\e{X^j\middle|X^{-j}}+\left(X'^j-\e{X'^j\middle|X'^{-j}}\right)-\widehat{\nu}_{-j}\left(X^{-j}\right)\right.\right.\\&\left.\left.\qquad-\left(X'^j-\widehat{\nu}_{-j}\left(X'^{-j}\right)\right)\right)^2P_X(\mathrm{dx})P_{X'}(\mathrm{dx}')\right)^{\frac{1}{2}}\\
        &= \left(\int_{\mathbb{R}^{2(p-1)}}\left(\nu_{-j}\left(X^{-j}\right)-\nu_{-j}\left(X'^{-j}\right)\right.\right.\\&\left.\left.\qquad-\widehat{\nu}_{-j}\left(X^{-j}\right)+\widehat{\nu}_{-j}\left(X'^{-j}\right)\right)^2P_{X^{-j}}(\mathrm{dx}^{-j})P_{X'^{-j}}(\mathrm{dx}'^{-j})\right)^{\frac{1}{2}}\\
        &\leq \left(\e{\left(\nu_{-j}\left(X^{-j}\right)-\widehat{\nu}_{-j}\left(X^{-j}\right)\right)^2}\right)^{\frac{1}{2}}+\left(\e{\left(\nu_{-j}\left(X'^{-j}\right)-\widehat{\nu}_{-j}\left(X'^{-j}\right)\right)^2}\right)^{\frac{1}{2}}.
    \end{align*}
    We conclude using that both terms converge to $0$ by the consistency of the regressor. 
\end{proof}

\section{Proofs of the double robustness}

\subsection{Proof of Theorem \ref{th:doubleRobust}}
We assume that $X^j \indep y \mid X^{-j}$. We start by assuming that $\widetilde{X}'^{(j)} \overset{\ell}{\to} \widetilde{X}^{(j)}$. 
We have that for any bounded and continuous $\ell$ and continuous function $f$, 
\begin{align*}
\left|\widehat{\psi}_\mathrm{CPI}(j)\right|&=\left| \frac{1}{n_{\mathrm{test}}}\sum_{i=1}^{n_{\mathrm{test}}}\ell\left(f(\widetilde{x}_{i}'^{(j)}),y_i\right)-\ell\left(f(x_{i}), y_i\right)\right|\\
&\leq \left| \frac{1}{n_{\mathrm{test}}}\sum_{i=1}^{n_{\mathrm{test}}}\ell\left(f(\widetilde{x}_{i}'^{(j)}),y_i\right)-\ell\left(f(x_{i}), y_i\right)-\e{\ell\left(f(\widetilde{X}'^{(j)}),y\right)-\ell\left(f(X), y\right)}\right|\\
&\qquad+\left|\e{\ell\left(f(\widetilde{X}'^{(j)}),y\right)-\ell\left(f(X), y\right)}\right|.
\end{align*}
The first term converges to 0 due to the Law of Large Numbers. The second term converges to 0 because of Portmanteau lemma, because under the null hypothesis $j\in \mathcal{H}_0$, $X^j \indep y \mid X^{-j}$, then $(\widetilde{X}^{(j)},y)\sim (X, y)$. In particular $$\ell\left(f(X), y\right)\sim \ell\left(f(\widetilde{X}^{(j)}), y\right).$$

On the other hand, we observe that as $X^j\indep y\mid X^{-j}$, then $m(X)$ does not depend on $X^{j}$. Let $\epsilon>0$. First, using the continuity of the loss, we have that for any $\epsilon$, there exists $\delta>0$ such that $$|\ell(\hat{m}(\tilde{x}'_i),y_i)-\ell(\hat{m}(\tilde{x}_i),y_i)|<\epsilon$$ if $|\hat{m}(\tilde{x}'_i)-\hat{m}(\tilde{x}_i)|<\delta$.

Second, using the asymptotic relevance (\ref{ass:asymp_relevance}), for any $\delta>0$, as $j\in \mathcal{H}_0$, there exists $n_0$ such that for any $n>n_0$, $$|\hat{m}_{n}(\tilde{x}'_i)-\hat{m}_{n}(\tilde{x}_i)|<\delta.$$ Therefore, for any $n>n_0$,
\begin{align*}
    \left|\frac{1}{n_\mathrm{test}}\sum\ell(\hat{m}(\tilde{x}'_i),y_i)-\ell(\hat{m}(\tilde{x}_i),y_i)\right|\leq \frac{1}{n_\mathrm{test}}\sum|\ell(\hat{m}(\tilde{x}'_i),y_i)-\ell(\hat{m}(\tilde{x}_i),y_i)|\leq \epsilon.
\end{align*}
 As this was proven for any $\epsilon>0$, we have that $\hat{\psi}_\mathrm{CPI}\to 0$ a.s. 




\subsection{On the asymptotic relevance}\label{subsec:asympt_relev}

In this section, we numerically explore the asymptotic relevance assumption (\ref{ass:asymp_relevance}). This assumption can be interpreted as requiring a model that does not extrapolate under the null hypothesis. The theoretical model does not depend on the $j$-th coordinate because $X^j \indep Y \mid X^{-j}$. This is easily illustrated for the linear model, where the coefficients of the null coordinates converge to zero; hence, imputations on these coordinates are irrelevant, since for sufficiently large $n_0$ the multiplied terms vanish. 

In contrast, most models are not as transparent as the linear model, and one cannot simply verify the absence of dependence on the $j$-th coordinate by inspecting an estimated coefficient. However, this can be assessed model-agnostically using the Permutation Feature Importance (PFI, see Section~\ref{sect:PFI}). The theoretical PFI index was proven to be zero under the null hypothesis in \citet{reyerolobo2025principledapproachcomparingvariable}, as it satisfies their proposed minimal axiom. In Figure~\ref{fig:asymp_relevance}, we show that for a large benchmark of ML models, the empirical PFI converges to zero. This indicates that there is no extrapolation under the null hypothesis, as the models indeed do not rely on the null coordinate.

The data are generated according to
\[
Y = X_1^2 + \epsilon, 
\quad \epsilon \sim \mathcal{N}(0,0.2), 
\quad X \sim \mathcal{N}(0,\Sigma),
\]
and we study the effect of varying the correlation between the null coordinate $X_2$ and the important coordinate $X_1$. In particular, we consider $\Sigma_{1,2} \in \{0, 0.3, 0.6, 0.9\}$. The sample size is $n=1000$, with $n_\mathrm{train}=800$ and $n_\mathrm{test}=200$. All models are implemented using the default specifications of \texttt{scikit-learn} (\cite{scikit-learn}). 

We first observe that linear regression and the Lasso perform poorly, as the quadratic relationship with the output is not captured by their linear specifications. Across all other models, the PFI tends to zero, confirming the absence of functional dependence on the null coordinate. The only exceptions arise at very high correlations, where Support Vector Regression (SVR) and $k$-Nearest Neighbors (KNN) exhibit slight model dependence on the null coordinate.

\begin{figure}[htbp]
    \centering
        \includegraphics[width=1.0\textwidth]{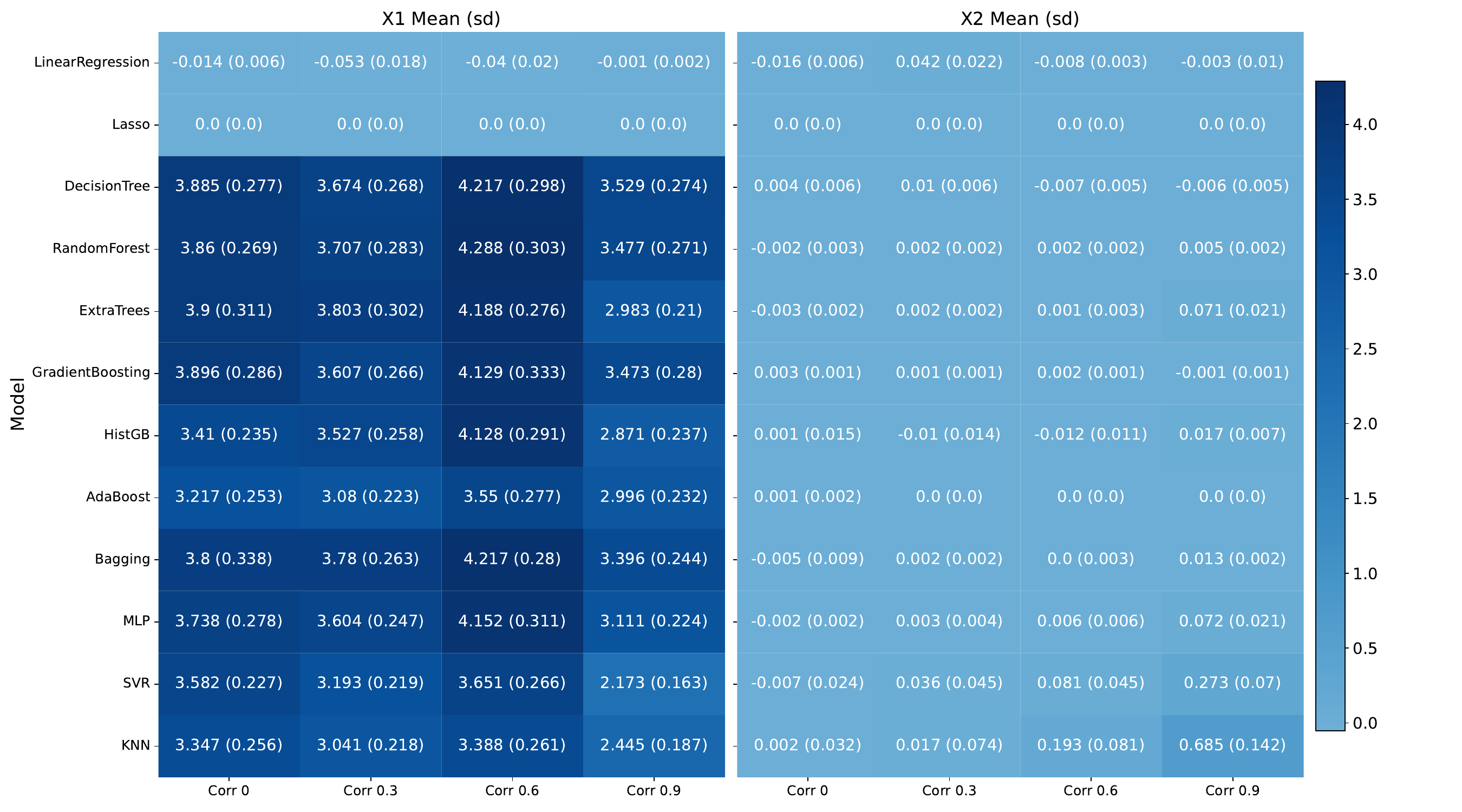}
    \caption{\textbf{Asymptotic relevance on standard ML models:}  
        Permutation Feature Importance (PFI) mean and standard deviation in parenthesis for $X_1$ (left) and $X_2$ (right) across different correlation levels and models, where $Y = X_1^2 + \epsilon, \quad \epsilon \sim \mathcal{N}(0,0.2), \quad X \sim \mathcal{N}(0,\Sigma),$
        with $\Sigma_{1,2} \in \{0, 0.3, 0.6, 0.9\}$. The sample size is $n=1000$, with 80\% used for training and 20\% for testing. The experiment was repeated 100 times. 
        }
    \label{fig:asymp_relevance}
\end{figure}

In Figure~\ref{fig:asymp_relevance_n} we fix the correlation level and observe that, as the sample size increases, all methods reduce their reliance on the null feature. This illustrates an asymptotic property: in practice, with finite samples and real data, we do not recommend using this sampler and instead advocate for conditional samplers. We further explore this assumption on real data in Appendix~\ref{app:asymp_rel_real_data}.

\begin{figure}[htbp]
    \centering
        \includegraphics[width=1.0\textwidth]{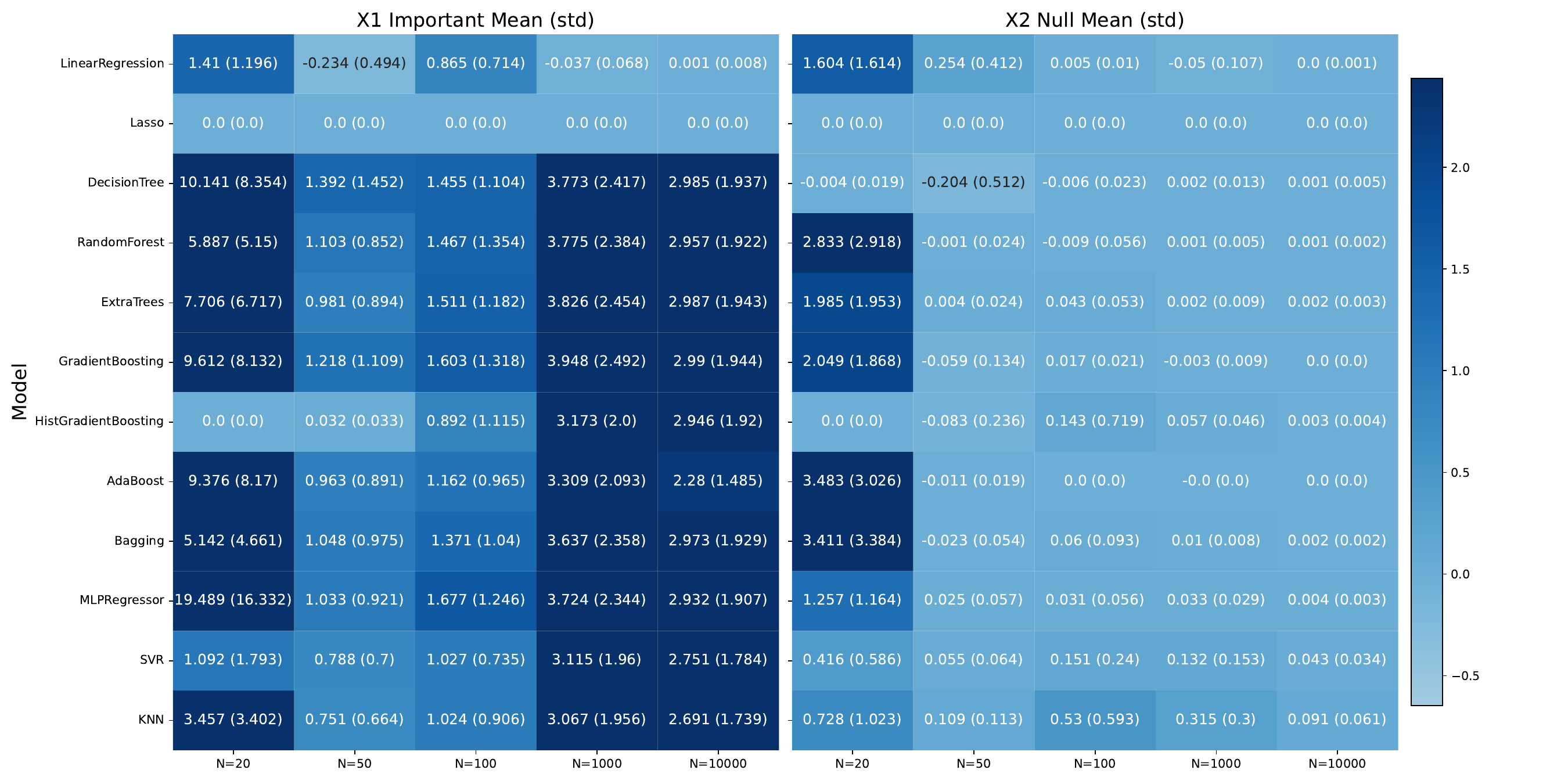}
    \caption{\textbf{Asymptotic relevance on standard ML models:}  
        Permutation Feature Importance (PFI) mean and standard deviation in parenthesis for $X_1$ (left) and $X_2$ (right) across different sample sizes and models, where $Y = X_1^2 + \epsilon, \quad \epsilon \sim \mathcal{N}(0,0.2), \quad X \sim \mathcal{N}(0,\Sigma),$
        with $\Sigma_{1,2}=0.6$. The 80\% of the sample size is used for training and 20\% for testing. The experiment was repeated 100 times. 
        }
    \label{fig:asymp_relevance_n}
\end{figure}

\subsection{Proof of Proposition \ref{prop:cpi-bias}}

\begin{proof}

    As we are under the conditional null hypothesis, using \cref{lemma:nullHyp} we have that there exists a function $m_{-j}\in \mathcal{F}_{-j}$ such that  $m(X)=m_{-j}(X^{-j})$. 
    
    To estimate TSI, as seen in \cref{subsec:fix-n-cal}, we begin by dividing CPI by 2.
    We observe that 
    
    \begin{align}
         \notag\e{\frac{1}{2}\widehat{\psi}_\mathrm{CPI}(j, P_0)\middle|\mathcal{D}_\mathrm{train}}&= 
         \e{\frac{1}{2n_\mathrm{test}}\sum_{i=1}^{n_\mathrm{test}}\left(y_i-\widehat{m}(\widetilde{X}'_i)\right)^2-\left(y_i-\widehat{m}(X_i)\right)^2\middle|\mathcal{D}_\mathrm{train}}\\
         &=\frac{1}{2}\e{\left(y-\widehat{m}(\widetilde{X}')\right)^2-\left(y-\widehat{m}(X)\right)^2\middle|\mathcal{D}_\mathrm{train}}.\label{eq:bias-cpi1}
     \end{align}

    We first note that using \cref{ass:regressModel}, we have that

    \begin{align*}
        \mathbb{E}&\left[\left(y-\widehat{m}(\widetilde{X}')\right)^2-\left(y-\widehat{m}(X)\right)^2\middle|\mathcal{D}_\mathrm{train} \right]\\&=\e{\left(m(X)-\widehat{m}(\widetilde{X}')\right)^2-\left(m(X)-\widehat{m}(X)\right)^2\middle|\mathcal{D}_\mathrm{train}}.
    \end{align*}
    
     We can develop the first term as 
    \begin{align*}
        \mathbb{E}&\left[\left(m(X)-\widehat{m}(\widetilde{X}')\right)^2\middle|\mathcal{D}_\mathrm{train}\right]\\&= \e{\left(m(X)-m(\widetilde{X})\right)^2} + \e{\left(m(\widetilde{X})-\widehat{m}(\widetilde{X}')\right)^2\middle|\mathcal{D}_\mathrm{train}}\\&+2\e{(m(X)-m(\widetilde{X}))(m(\widetilde{X})-\widehat{m}(\widetilde{X}')))\middle|\mathcal{D}_\mathrm{train}}.
    \end{align*}
    We observe that the last crossed-term is null because using the conditional null hypothesis we have that $m(X)=m(\widetilde{X})$. 
    We note that the first term is exactly twice the Total Sobol Index, which under the conditional null hypothesis it is also null. The second term can be developed as
    \begin{align*}
        \mathbb{E}\left[\left(m(\widetilde{X})-\widehat{m}(\widetilde{X}')\right)^2\middle|\mathcal{D}_\mathrm{train}\right]&=\e{(m(\widetilde{X})-\widehat{m}(\widetilde{X}))^2\middle|\mathcal{D}_\mathrm{train}}+\e{(\widehat{m}(\widetilde{X})-\widehat{m}(\widetilde{X}'))^2\middle|\mathcal{D}_\mathrm{train}}\\&+2\e{(m(\widetilde{X})-\widehat{m}(\widetilde{X})))(\widehat{m}(\widetilde{X})-\widehat{m}(\widetilde{X}'))\middle|\mathcal{D}_\mathrm{train}}.
    \end{align*}

     Then, using that $\widetilde{X}\overset{\mathrm{i.i.d.}}{\sim}X$, we have that $\e{(m(\widetilde{X})-\widehat{m}(\widetilde{X}))^2\middle|\mathcal{D}_\mathrm{train}}=\e{(m(X)-\widehat{m}(X))^2\middle|\mathcal{D}_\mathrm{train}}$, so this first term will cancel with the second term in \eqref{eq:bias-cpi1}.
     
     Finally, combining all the previous we have that 
     \begin{align*}
         \mathbb{E}&\left[\frac{1}{2}\widehat{\psi}_\mathrm{CPI}(j, P_0)\middle|\mathcal{D}_\mathrm{train}\right]\\&=\frac{1}{2}\e{\left(y-\widehat{m}(\widetilde{X}')\right)^2\middle|\mathcal{D}_\mathrm{train}}-\e{\left(y-\widehat{m}(X)\right)^2\middle|\mathcal{D}_\mathrm{train}}\\&=\psi_\mathrm{TSI}(j, P_0)+\frac{1}{2}\e{\left(\widehat{m}(\widetilde{X})-\widehat{m}(\widetilde{X}')\right)^2\middle|\mathcal{D}_\mathrm{train}}\\&\qquad+ \e{\left(m(\widetilde{X})-\widehat{m}(\widetilde{X})))(\widehat{m}(\widetilde{X})-\widehat{m}(\widetilde{X}')\right)\middle|\mathcal{D}_\mathrm{train}}\\&=\frac{1}{2}\e{\left(\widehat{m}(\widetilde{X})-\widehat{m}(\widetilde{X}')\right)^2\middle|\mathcal{D}_\mathrm{train}}+ \e{\left(m(\widetilde{X})-\widehat{m}(\widetilde{X})))(\widehat{m}(\widetilde{X})-\widehat{m}(\widetilde{X}')\right)\middle|\mathcal{D}_\mathrm{train}}.
     \end{align*}
     
\end{proof}

\subsection{Proof of Proposition \ref{prop:loco-bias}}

 \begin{proof}
     \begin{align*}
         \e{\widehat{\psi}_\mathrm{LOCO}(j)\middle|\mathcal{D}_\mathrm{train}}&= 
         \e{\frac{1}{n_\mathrm{test}}\sum_{i=1}^{n_\mathrm{test}}\left(y_i-\widehat{m}_{-j}(X_i^{-j})\right)^2-\left(y_i-\widehat{m}(X_i)\right)^2\middle|\mathcal{D}_\mathrm{train}}\\
         &=\e{\left(y-\widehat{m}_{-j}(X^{-j})\right)^2-\left(y-\widehat{m}(X)\right)^2\middle|\mathcal{D}_\mathrm{train}}.
     \end{align*}
     On the one hand, we have that 
     \begin{align}
         \e{\left(y-\widehat{m}_{-j}(X^{-j})\right)^2\middle|\mathcal{D}_\mathrm{train}}&= \sigma^2+\e{\left(m(X)-\widehat{m}_{-j}(X^{-j})\right)^2\middle|\mathcal{D}_\mathrm{train}}\text{ using \cref{ass:regressModel}}\notag\\
         &= \sigma^2+\e{\left(m_{-j}(X^{-j})-\widehat{m}_{-j}(X^{-j})\right)^2\middle|\mathcal{D}_\mathrm{train}}\notag\\&\qquad+\e{\left(m(X)-m_{-j}(X^{-j})\right)^2}\label{eq:bias-loco1},
     \end{align}
     where we have used that 
     \begin{align*}
         \mathbb{E}&\left[(m_{-j}(X^{-j})-\widehat{m}_{-j}(X^{-j}))(m(X)-m_{-j}(X^{-j}))|\mathcal{D}_\mathrm{train}\right]\\&=\e{(m_{-j}(X^{-j})-\widehat{m}_{-j}(X^{-j}))\e{(m(X)-m_{-j}(X^{-j}))|X^{-j},\mathcal{D}_\mathrm{train}}|\mathcal{D}_\mathrm{train}}=0.
     \end{align*}
     We note that $\e{\left(m(X)-m_{-j}(X^{-j})\right)^2}$ is exactly $\psi_\mathrm{TSI}(j, P_0)$.
     On the other hand, we also have that
     \begin{align}
         \e{\left(y-\widehat{m}(X)\right)^2\middle|\mathcal{D}_\mathrm{train}}&=\sigma^2+\e{(m(X)-\widehat{m}(X))^2\middle|\mathcal{D}_\mathrm{train}}.\label{eq:bias-loco2}
     \end{align}
     Combining \eqref{eq:bias-loco1} and \eqref{eq:bias-loco2}, we have that 
     \begin{align*}
         \mathbb{E}&\left[\widehat{\psi}_\mathrm{LOCO}(j)\middle|\mathcal{D}_\mathrm{train}\right]\\&= \psi_\mathrm{TSI}(j, P_0) + \e{\left(m_{-j}(X^{-j})-\widehat{m}_{-j}(X^{-j})\right)^2\middle|\mathcal{D}_\mathrm{train}}-\e{(m(X)-\widehat{m}(X))^2\middle|\mathcal{D}_\mathrm{train}}. 
     \end{align*}
 \end{proof}

\subsection{Proofs of double robustness in Gaussian linear setting}
As $X$ is Gaussian, as recalled in the main text, there is an explicit form for the distribution of a coordinate given the others: $X_{j}|X_{-j}=x_{-j}\sim \mathcal{N}(\mu^j_\mathrm{cond}, \Sigma^j_\mathrm{cond})$ 
with $\mu^j_\mathrm{cond}:=\mu_{j}+\Sigma_{j, -j}\Sigma^{-1}_{-j, -j}(x_{-j}-\mu_{-j})$ and with $\Sigma^j_\mathrm{cond}:=\Sigma_{j, j}-\Sigma_{j,-j}\Sigma^{-1}_{-j, -j}\Sigma_{-j, j}$.
For simplicity, we assume that $\mu=0$. Then, $\e{X^j| X^{-j}} = \Sigma_{j, -j}\Sigma^{-1}_{-j, -j}X_{-j}$, so it is a linear model. We will then denote it as $\nu_{-j}(X^{-j})=X^{-j}\gamma_{-j}$ for $\gamma_{-j}\in \mathbb{R}^{p-1}$. 

We will denote by $\Delta\beta = \beta - \widehat{\beta}$ where $\widehat{\beta}$ was obtained by regression $y$ given $X$. Similarly, $\Delta \beta '= \beta' -\widehat{\beta}' $, where $\widehat{\beta}'$ was trained only over $X^{-j}$ and $\Delta\gamma_{-j}=\gamma_{-j}-\widehat{\gamma}_{-j}$.

We start by showing that the bias of LOCO decreases linearly with the training sample, and then the quadratic decay for CPI.

\subsubsection{Linear decay of LOCO}

We decompose this proof into two parts. First, we prove \cref{prop:LM-LOCO-bias}, which rigorously decomposes the bias into the bias of estimating each linear model $\widehat{m}$ and $\widehat{m}_{-j}$. Then, in \cref{lemma:bias-LOCO-linear}, we use the underlying distribution to compute the expectation and prove the linear decay.

\begin{proposition}[LM LOCO bias]
    Under \cref{ass:LM} with $X$ Gaussian, we have that \begin{align*}
        \e{\widehat{\psi}_\mathrm{LOCO}(j, P_0)\middle|\mathcal{D}_\mathrm{train}}&=\psi_{TSI}(j, P_0)+\Delta\beta'^\top\Sigma_{-j}\Delta\beta'-\Delta\beta^\top\Sigma\Delta\beta\\&=\psi_{TSI}(j, P_0)+\|\Delta\beta'\|^2_{\Sigma_{-j}}-\|\Delta\beta\|^2_\Sigma. 
    \end{align*}\label{prop:LM-LOCO-bias}
\end{proposition}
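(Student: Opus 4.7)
The plan is to specialize the general LOCO bias decomposition from Proposition~\ref{prop:loco-bias} to the Gaussian linear setting. Recall that proof yields
$$\mathbb{E}\!\left[\widehat{\psi}_\mathrm{LOCO}^j \,\middle|\, \mathcal{D}_\mathrm{train}\right] = \psi_\mathrm{TSI}(j, P_0) + \mathbb{E}\!\left[(m_{-j}(X^{-j}) - \widehat{m}_{-j}(X^{-j}))^2 \,\middle|\, \mathcal{D}_\mathrm{train}\right] - \mathbb{E}\!\left[(m(X) - \widehat{m}(X))^2 \,\middle|\, \mathcal{D}_\mathrm{train}\right],$$
so all that remains is to evaluate each of the two conditional mean-squared errors under Assumption~\ref{ass:LM} with $X \sim \mathcal{N}(0,\Sigma)$.

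First, Assumption~\ref{ass:LM} gives $m(X) = X\beta$, and by Lemma~\ref{lemma:LMX_without_j} the restricted regression function is also linear, so there exists $\beta' \in \mathbb{R}^{p-1}$ with $m_{-j}(X^{-j}) = X^{-j}\beta'$. The trained estimators $\widehat{m}(x) = x\widehat{\beta}$ and $\widehat{m}_{-j}(x^{-j}) = x^{-j}\widehat{\beta}'$ are thus linear as well, and, conditional on $\mathcal{D}_\mathrm{train}$, the vectors $\Delta\beta = \beta - \widehat{\beta}$ and $\Delta\beta' = \beta' - \widehat{\beta}'$ are deterministic. This reduces each prediction error to a single scalar linear form in a fresh test point.

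Second, I evaluate the two quadratic expectations. Since the test point $X \sim \mathcal{N}(0,\Sigma)$ is independent of $\mathcal{D}_\mathrm{train}$,
$$\mathbb{E}\!\left[(m(X) - \widehat{m}(X))^2 \,\middle|\, \mathcal{D}_\mathrm{train}\right] = \mathbb{E}\!\left[(X\Delta\beta)^2 \,\middle|\, \mathcal{D}_\mathrm{train}\right] = \Delta\beta^\top \mathbb{E}[X^\top X]\, \Delta\beta = \Delta\beta^\top \Sigma\, \Delta\beta = \|\Delta\beta\|_\Sigma^2,$$
using $\mu = 0$. An identical calculation for the restricted model, with the marginal covariance $\Sigma_{-j}$ of $X^{-j}$ (obtained by deleting the $j$-th row and column of $\Sigma$), gives $\mathbb{E}[(m_{-j}(X^{-j}) - \widehat{m}_{-j}(X^{-j}))^2 \mid \mathcal{D}_\mathrm{train}] = \|\Delta\beta'\|_{\Sigma_{-j}}^2$. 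Substituting both into the decomposition above yields the claim.

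The argument is essentially bookkeeping of quadratic forms; there is no genuine obstacle. The only point requiring a moment of care is the independence of the test point $X$ from the training data, which both justifies treating $\Delta\beta$ and $\Delta\beta'$ as fixed when integrating over $X$ and, upstream in Proposition~\ref{prop:loco-bias}, kills the cross terms involving the noise $\epsilon$. This same proposition will serve as the starting point for Lemma~\ref{lemma:bias-LOCO-linear}, where one then computes $\mathbb{E}[\|\Delta\beta\|_\Sigma^2]$ and $\mathbb{E}[\|\Delta\beta'\|_{\Sigma_{-j}}^2]$ from standard OLS risk formulas to obtain the $O(1/n_\mathrm{train})$ rate.
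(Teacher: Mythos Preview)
Your proof is correct and essentially follows the paper's approach. The only organizational difference is that you factor through the general decomposition of Proposition~\ref{prop:loco-bias} and then evaluate the two mean-squared errors $\mathbb{E}[(m-\widehat m)^2\mid\mathcal{D}_\mathrm{train}]$ and $\mathbb{E}[(m_{-j}-\widehat m_{-j})^2\mid\mathcal{D}_\mathrm{train}]$ as quadratic forms, whereas the paper computes the full residual variances $\mathbb{V}(y-\widehat m(X)\mid\mathcal{D}_\mathrm{train})$ and $\mathbb{V}(y-\widehat m_{-j}(X^{-j})\mid\mathcal{D}_\mathrm{train})$ directly from the Gaussian linear structure (via Lemma~\ref{lemma:LMX_without_j}) and then subtracts; the underlying algebra is identical.
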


\begin{proof}
First we note that $$y-\widehat{m}(X)=X\beta+\epsilon-X\widehat{\beta}=X\Delta\beta+\epsilon\sim \mathcal{N}(0, \sigma^2+\Delta\beta^\top\Sigma\Delta\beta).$$

Similarly, using \cref{lemma:LMX_without_j}, we have that $$y-\widehat{m}_{-j}(X^{-j})=X^{-j}\Delta\beta'+\epsilon'\sim\mathcal{N}(0, \Delta\beta'^\top\Sigma_{-j}\Delta\beta'+\sigma^2 + \Sigma_\mathrm{cond}{\beta^j}^2).$$

Therefore, we have that 
\begin{align*}
    \e{\widehat{\psi}_\mathrm{LOCO}(j, P_0)\middle|\mathcal{D}_\mathrm{train}}&= \e{(y-\widehat{m}_{-j}(X^{-j}))^2\middle|\mathcal{D}_\mathrm{train}}-\e{(y-\widehat{m}(X))^2\middle|\mathcal{D}_\mathrm{train}}\\&=
    \mathbb{V}\left(y-\widehat{m}_{-j}(X^{-j})\middle|\mathcal{D}_\mathrm{train}\right) - \mathbb{V}\left(y-\widehat{m}(X)\middle|\mathcal{D}_\mathrm{train}\right)\\ 
    &= \sigma^2+{\beta^j}^2\Sigma_\mathrm{cond}+\Delta\beta'^\top\Sigma_{-j}\Delta\beta'-\sigma^2-\Delta\beta^\top\Sigma\Delta\beta\\
    &=\psi_{\mathrm{TSI}}(j, P_0)+\|\Delta\beta'\|^2_{\Sigma_{-j}}-\|\Delta\beta\|^2_\Sigma,
\end{align*}
where we have used that $\psi_{\mathrm{TSI}}(j, P_0)={\beta^j}^2\Sigma_\mathrm{cond}$.

\end{proof}

For an initial intuition, let's assume we have independent training samples for each regressor (which is even proposed in \citet{williamson2023general} to control the type-I error).

Therefore, we observe that the bias is distributed as a linear combination of independent chi-squared covariates. Indeed,  using that in a linear model $\widehat{\beta}\sim\mathcal{N}(0, \sigma^2(X_{\mathrm{tr}}^\top X_\mathrm{tr})^{-1})$, we have that
$\Delta\beta'\sim (\sigma^2+\beta^2_j\Sigma_\mathrm{cond})\mathcal{N}(0, (X_{\mathrm{tr}, -j}^\top X_{\mathrm{tr}, -j})^{-1})$ and $\Delta\beta\sim\mathcal{N}(0, \sigma^2(X^\top_\mathrm{tr}X_\mathrm{tr})^{-1})$. Therefore, for independent estimates, we would diagonalize and obtain a linear combination of independent chi-squared variables. This decreases linearly with the number of samples, as the error rate of the linear model. Intuitively, $\Sigma^{-1/2}X^\top_{\mathrm{tr}}X_\mathrm{tr}\Sigma^{-1/2}$ is \textit{almost} the identity matrix multiplied by a coefficient decreasing in probability linearly with $n_\mathrm{train}$. Similarly for $\Sigma_{-j}^{-1/2}X_{-j}^\top X_{-j}\Sigma_{-j}^{-1/2}$. Therefore, we have that 
\begin{align*}
    \|\Delta\beta'\|^2_{\Sigma_{-j}}-\|\Delta\beta\|^2_\Sigma \approx \left((\sigma^2+{\beta^j}^2\Sigma_\mathrm{cond})\chi^2(p-1)\right)O_P(1/n_\mathrm{train})-\left(\sigma^2\chi^2(p)\right)O_P(1/n_\mathrm{train}),
\end{align*}
with $O_P(1/n)$ meaning that it decreases in probability linearly with $n$. We observe that in this example both errors follow the same distribution. This is because, as shown in \cref{lemma:LMX_without_j}, the restricted model also satisfies the linear model. We observe that this is not generally the case: \citet{lobo2024primerlinearclassificationmissing} showed that even under the global logistic assumption, the restricted model is not logistic. 

Even if both errors are not independent, there is no reason that the error made in a given coordinate would be compensated with the error made in the same coordinate in the global model. Therefore, this is also one difference with the error made by CPI, as will be seen later: in CPI, we will only take into account the error made to estimate the $ \beta^j$ coordinate.

Under the null hypothesis, we have that $\beta^j = 0$. 

We show that under the null hypothesis the expectation of the error decreases linearly with the number of samples:

\begin{lemma}
Under the conditional null hypothesis, Assumption \ref{ass:LM} and Gaussian covariates, we have that 

\begin{align*}
    \e{\widehat{\psi}_\mathrm{LOCO}(j, P_0)}=\psi_{\mathrm{TSI}}(j, P_0)+O(1/n_\mathrm{train})=O(1/n_\mathrm{train}).
\end{align*}
\end{lemma}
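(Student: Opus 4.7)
The plan is to start from Proposition~\ref{prop:LM-LOCO-bias}, which gives the exact conditional bias decomposition
\[
\mathbb{E}\bigl[\widehat{\psi}_\mathrm{LOCO}(j, P_0)\mid \mathcal{D}_\mathrm{train}\bigr]
= \psi_{\mathrm{TSI}}(j, P_0) + \|\Delta\beta'\|_{\Sigma_{-j}}^2 - \|\Delta\beta\|_{\Sigma}^2.
\]
Under the conditional null hypothesis we have $X^j\indep y\mid X^{-j}$, so $\beta^j=0$ in the linear model, hence $\psi_{\mathrm{TSI}}(j,P_0)={\beta^j}^2\Sigma_\mathrm{cond}=0$. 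The task therefore reduces to controlling the expectation of the two squared error terms after taking the outer expectation over $\mathcal{D}_\mathrm{train}$.

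For the full model, standard linear regression theory gives, conditional on the design matrix $X_\mathrm{tr}$, that $\Delta\beta\mid X_\mathrm{tr}\sim\mathcal{N}(0,\sigma^2(X_\mathrm{tr}^\top X_\mathrm{tr})^{-1})$, so
\[
\mathbb{E}\bigl[\|\Delta\beta\|_\Sigma^2 \mid X_\mathrm{tr}\bigr] = \sigma^2\,\mathrm{tr}\!\left(\Sigma\,(X_\mathrm{tr}^\top X_\mathrm{tr})^{-1}\right).
\]
Since the rows of $X_\mathrm{tr}$ are i.i.d.\ Gaussian with covariance $\Sigma$, $X_\mathrm{tr}^\top X_\mathrm{tr}$ follows a Wishart distribution $\mathcal{W}_p(n_\mathrm{train},\Sigma)$, and for $n_\mathrm{train}>p+1$ the inverse-Wishart mean formula yields $\mathbb{E}[(X_\mathrm{tr}^\top X_\mathrm{tr})^{-1}]=\Sigma^{-1}/(n_\mathrm{train}-p-1)$. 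Taking outer expectation gives
\[
\mathbb{E}\bigl[\|\Delta\beta\|_\Sigma^2\bigr] = \frac{\sigma^2 p}{n_\mathrm{train}-p-1} = O(1/n_\mathrm{train}).
\]

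For the restricted model, Lemma~\ref{lemma:LMX_without_j} shows that $y\mid X^{-j}$ is itself a linear model with coefficient vector $\beta'$ and noise variance $\sigma^2 + {\beta^j}^2\Sigma_\mathrm{cond}$; under the null this noise variance collapses back to $\sigma^2$. Exactly the same OLS/Wishart argument, now applied to the $(p-1)$-dimensional regression of $y$ on $X^{-j}$ with covariance $\Sigma_{-j}$, gives
\[
\mathbb{E}\bigl[\|\Delta\beta'\|_{\Sigma_{-j}}^2\bigr] = \frac{\sigma^2 (p-1)}{n_\mathrm{train}-p} = O(1/n_\mathrm{train}).
\]
Combining these two bounds with the vanishing TSI term yields the claim $\mathbb{E}[\widehat{\psi}_\mathrm{LOCO}(j,P_0)]=O(1/n_\mathrm{train})$.

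The only mildly delicate point is justifying the inverse-Wishart expectation: one needs $n_\mathrm{train}>p+1$ so that $(X_\mathrm{tr}^\top X_\mathrm{tr})^{-1}$ has finite mean, and one must check that the centering assumption $\mu=0$ used throughout this section really lets us apply the Wishart formula (otherwise we would work with the sample covariance, shifting $n_\mathrm{train}-p-1$ to $n_\mathrm{train}-p-2$, which does not change the rate). Everything else is a direct plug-in once Proposition~\ref{prop:LM-LOCO-bias} is invoked; the hard conceptual work was already done there, and all that remains here is to note that both error terms scale like $1/n_\mathrm{train}$ rather than like $1/n_\mathrm{train}^2$, which is the crux of the contrast with the CPI bound in Lemma~\ref{lemm:doubl-rob-LM}.
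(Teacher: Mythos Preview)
Your proof is correct and follows essentially the same route as the paper: invoke Proposition~\ref{prop:LM-LOCO-bias}, use the conditional Gaussianity of $\Delta\beta$ given $X_\mathrm{tr}$, and take the outer expectation via the inverse-Wishart mean to get each quadratic term of order $1/n_\mathrm{train}$. The only cosmetic differences are that the paper whitens via $A=\Sigma^{1/2}\Delta\beta$ before applying the trace/Wishart argument and then computes the explicit difference of the two terms, whereas you bound each separately; your constants ($\sigma^2$ and $n_\mathrm{train}-p$ for the restricted model) are in fact the correct ones.
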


Therefore, in any case, assuming two training sets or one training set, the bias is of the order of $n_\mathrm{train}$.

\begin{proof}
Using Proposition \ref{prop:LM-LOCO-bias}, we only need to compute $\e{\|\Delta\beta'\|^2_{\Sigma_{-j}}}$ and $\|\Delta\beta\|^2_\Sigma$. We compute the second one and the first one is done equivalently.

We first observe that $\Delta\beta| X_\mathrm{tr}\sim\mathcal{N}(0, \sigma^2(X_\mathrm{tr}^\top X_\mathrm{tr})^{-1})$. We denote by $$A:=\Sigma^{1/2}\Delta\beta/\sigma^2\sim \mathcal{N}(0, (\Sigma^{-1/2}X_\mathrm{tr}^\top X_\mathrm{tr}\Sigma^{-1/2})^{-1}).$$ We have that 

\begin{align*}
    \e{\|\Delta\beta\|^2_{\Sigma}}&= \sigma^4\e{A^\top A} = \sigma^4 \e{\mathrm{tr}(A^\top A)}= \sigma^4 \e{\mathrm{tr}(A A^\top)} = 
    \sigma^4 \mathrm{tr}(\e{A A^\top})\\&=  \sigma^4 \mathrm{tr}(\e{\e{A A^\top|X_\mathrm{tr}}})=  \sigma^4 \mathrm{tr}(\e{(\Sigma^{-1/2}X_\mathrm{tr}^\top X_\mathrm{tr}\Sigma^{-1/2})^{-1}}).
\end{align*}

We remark that $X_{tr}^i\Sigma^{-1/2}\sim \mathcal{N}(0, I_{p})$. 
Therefore, $(\Sigma^{-1/2}X_\mathrm{tr}^\top X_\mathrm{tr}\Sigma^{-1/2})^{-1}$ is distributed as a Inverse-Wishart with $I_d$ the scale matrix and $n_\mathrm{train}$ the degrees of freedom. Therefore, its expectation is $I_d/(n_\mathrm{train}-p-1)$. Therefore, we have that 
\begin{align*}
    \e{\|\Delta\beta\|^2_{\Sigma}} = \sigma^4\frac{1}{n_\mathrm{train}-p-1}\mathrm{tr}(I_p)=\sigma^4\frac{p}{n_\mathrm{train}-p-1}.
\end{align*}
Similarly for $\e{\|\Delta\beta'\|^2_{\Sigma_{-j}}}$, we have that 
\begin{align*}
    \e{\|\Delta\beta'\|^2_{\Sigma_{-j}}} = \sigma^4\frac{p-1}{n_\mathrm{train}-p-2}.
\end{align*}
Therefore, we conclude that 
\begin{align*}
    \e{\|\Delta\beta'\|^2_{\Sigma_{-j}}} - \e{\|\Delta\beta\|^2_{\Sigma}}&=\sigma^4\frac{p-1}{n_\mathrm{train} -p-2}- \sigma^4\frac{p}{n_\mathrm{train}-p-1}\\&=\sigma^4\frac{-n_\mathrm{train}+2p+1}{(n_\mathrm{train}-p-1)(n_\mathrm{train}-p-2)}=O(1/n_\mathrm{train}).
\end{align*}

\end{proof}

\subsubsection{Quadratic decay of CPI}

In this section, we prove that CPI benefits from double robustness, achieving a faster convergence rate for the conditional null hypothesis. 
For notational simplicity, we suppress the superscript in $\widetilde{X}^{(j)}$.

Indeed, we first note that in the linear Gaussian setting, the bias will consist of the product of errors.

\begin{proposition}[LM CPI bias] Under Assumption \ref{ass:LM} with X Gaussian and $j\in\mathcal{H}_0$, we have that 
\begin{align}
    \e{\frac{1}{2}\widehat{\psi}_\mathrm{CPI}(j)\middle| \mathcal{D}_\mathrm{train}}&\approx \psi_\mathrm{TSI}(j, P_0)+\widehat{\beta}_j^2 \Delta \gamma_{-j}^\top\Sigma_{-j}\Delta\gamma_{-j}\\
    &= \psi_\mathrm{TSI}(j, P_0)+\widehat{\beta}_j^2 \|\Delta \gamma_{-j}\|^2_{\Sigma_{-j}}.
\end{align}
    \label{prop:LM-CPI-bias}
\end{proposition}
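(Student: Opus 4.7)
The plan is to apply Proposition \ref{prop:cpi-bias} and exploit the linear structure to evaluate the resulting conditional expectation in closed form. The key observation is that, in the linear Gaussian setting, $\widehat{m}(\widetilde{X}) - \widehat{m}(\widetilde{X}')$ factors multiplicatively into $\widehat{\beta}_j$ times a zero-mean Gaussian quantity depending only on $\Delta\gamma_{-j}$.

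Concretely, since $\widehat{m}(x) = x\widehat{\beta}$ is linear and $\widetilde{X}^{-j} = \widetilde{X}'^{-j} = X^{-j}$ by the CPI construction, only the $j$-th coordinate contributes, giving $\widehat{m}(\widetilde{X}) - \widehat{m}(\widetilde{X}') = \widehat{\beta}_j(\widetilde{X}^j - \widetilde{X}'^j)$. Applying \eqref{eq:cpi-sampling} with $\nu_{-j}(x^{-j}) = x^{-j}\gamma_{-j}$ (valid in the Gaussian case) and $\widehat{\nu}_{-j}(x^{-j}) = x^{-j}\widehat{\gamma}_{-j}$, the terms in $X'^j$ cancel and I obtain $\widetilde{X}^j - \widetilde{X}'^j = (X^{-j} - X'^{-j})\Delta\gamma_{-j}$, where $X'$ is the independent draw supplying the permuted residual. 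Conditional on $\mathcal{D}_\mathrm{train}$, this quantity is centered Gaussian with variance $2\,\Delta\gamma_{-j}^\top\Sigma_{-j}\Delta\gamma_{-j}$, so the squared term in Proposition \ref{prop:cpi-bias} contributes, after the overall $1/2$, exactly $\widehat{\beta}_j^2\|\Delta\gamma_{-j}\|^2_{\Sigma_{-j}}$ --- the claimed leading expression.

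It then remains to handle the cross term $\e{(m(\widetilde{X})-\widehat{m}(\widetilde{X}))(\widehat{m}(\widetilde{X})-\widehat{m}(\widetilde{X}')) \mid \mathcal{D}_\mathrm{train}}$, which is what the $\approx$ symbol absorbs. Under the null $\beta_j = 0$, so $m(\widetilde{X}) - \widehat{m}(\widetilde{X}) = X^{-j}\Delta\beta^{-j} - \widetilde{X}^j\widehat{\beta}_j$; expanding the product, using Gaussian moments, and exploiting that the residual supplied by $X'$ is independent of $X^{-j}, X'^{-j}$ under Assumption \ref{ass:generalAssumpCondSampl}, this cross term reduces to a combination of $\widehat{\beta}_j\,\Delta\beta^{-j\top}\Sigma_{-j}\Delta\gamma_{-j}$ and $\widehat{\beta}_j^2\,\gamma_{-j}^\top\Sigma_{-j}\Delta\gamma_{-j}$. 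The main obstacle is that, conditionally on $\mathcal{D}_\mathrm{train}$, these terms are not obviously dominated pointwise by the quadratic leading term: the $\approx$ has to be interpreted as equality up to contributions that vanish once one takes the further expectation over $\mathcal{D}_\mathrm{train}$ used in Lemma \ref{lemm:doubl-rob-LM}. Under the sample-splitting assumption of that lemma, $\widehat{\beta}$ and $\widehat{\gamma}_{-j}$ are independent, and $\e{\widehat{\beta}_j} = \beta_j = 0$, $\e{\Delta\gamma_{-j}} = 0$ by OLS unbiasedness; both cross contributions therefore factor through a null expectation and disappear, leaving $\widehat{\beta}_j^2\|\Delta\gamma_{-j}\|^2_{\Sigma_{-j}}$ as the sole surviving piece, from which the $O(1/n_\mathrm{train}^2)$ rate asserted in Lemma \ref{lemm:doubl-rob-LM} will follow.
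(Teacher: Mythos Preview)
Your proposal is correct and follows essentially the same route as the paper: both apply Proposition~\ref{prop:cpi-bias}, use linearity to isolate $\widehat{m}(\widetilde{X})-\widehat{m}(\widetilde{X}')=\widehat{\beta}_j(X^{-j}-X'^{-j})\Delta\gamma_{-j}$, and obtain the squared term $2\widehat{\beta}_j^2\|\Delta\gamma_{-j}\|^2_{\Sigma_{-j}}$, which after the factor $1/2$ gives the leading expression.

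The only real difference lies in how the cross term is dispatched. The paper writes it as $\widehat{\beta}_j\,\Delta\beta^\top\,\e{\widetilde{X}(X^{-j}-X'^{-j})^\top\mid\mathcal{D}_\mathrm{train}}\,\Delta\gamma_{-j}$ and declares it a ``third-order term'' (three factors each tending to zero) that is negligible. You go further: you correctly point out that, conditionally on $\mathcal{D}_\mathrm{train}$, this $O_P(n^{-3/2})$ contribution is \emph{not} pointwise dominated by the $O_P(n^{-2})$ leading term, and you resolve the $\approx$ by showing that the cross pieces $\widehat{\beta}_j\,\Delta\beta^{-j\top}\Sigma_{-j}\Delta\gamma_{-j}$ and $\widehat{\beta}_j^2\,\gamma_{-j}^\top\Sigma_{-j}\Delta\gamma_{-j}$ have zero expectation once one averages over $\mathcal{D}_\mathrm{train}$ under the sample-splitting assumption of Lemma~\ref{lemm:doubl-rob-LM} (via $\e{\Delta\gamma_{-j}}=0$ and independence from $\widehat{\beta}$). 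Your treatment is therefore more explicit about what the $\approx$ actually encodes and meshes directly with the hypotheses of Lemma~\ref{lemm:doubl-rob-LM}, at the cost of importing the sample-splitting assumption one step earlier than the paper does.
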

 
We first note that $\|\Delta \gamma_{-j}\|^2_{\Sigma_{-j}}$ represents the error resulting from using a finite sample to estimate the function $\nu_{-j}$. Since this function is linear, the error will decrease linearly with the size of the training sample. We also observe that, in this case, contrary to what occurs in the LOCO approach, the only estimation error accounted for in the global model is in the estimation of $\beta_j$. In general, this error will converge to the coefficient, leading to the usual linear convergence rate in the product.

Nevertheless, under the conditional null hypothesis, this coefficient is zero. Consequently, it will also decrease linearly with the training sample size. Since both errors are multiplied, we obtain a quadratic convergence rate, meaning that CPI will detect more quickly when a covariate is null, resulting in fewer false positives than when using LOCO for variable selection with statistical guarantees.

\begin{proof}
We first note that since $X$ is Gaussian, Assumption \ref{ass:generalAssumpCondSampl} is satisfied. Therefore, from Proposition \ref{prop:cpi-bias} we have that
\begin{align*}
     \mathbb{E}&\left[\frac{1}{2}\widehat{\psi}_\mathrm{CPI}(j)\middle|\mathcal{D}_\mathrm{train}\right]\\&=\frac{1}{2}\e{\left(\widehat{m}(\widetilde{X})-\widehat{m}(\widetilde{X}')\right)^2\middle|\mathcal{D}_\mathrm{train}}+ \e{\left(m(\widetilde{X})-\widehat{m}(\widetilde{X})))(\widehat{m}(\widetilde{X})-\widehat{m}(\widetilde{X}')\right)\middle|\mathcal{D}_\mathrm{train}}.
 \end{align*}
 
We start by noting that the second bias term is negligible:

\begin{align*}
    \mathbb{E}&\left[\left(m(\widetilde{X})-\widehat{m}(\widetilde{X})))(\widehat{m}(\widetilde{X})-\widehat{m}(\widetilde{X}')\right)\middle|\mathcal{D}_\mathrm{train}\right] = \e{(\Delta\beta^\top\widetilde{X})(\widehat{\beta}^\top(\widetilde{X}-\widetilde{X}'))\middle|\mathcal{D}_\mathrm{train}}\\
    &= \Delta\beta^\top\e{\widetilde{X}(\widehat{\beta}^j(\nu_{-j}(X^{-j})+(X'^j-\nu_{-j}(X'^{-j}))-\widehat{\nu}_{-j}(X^{-j})-(X'^j-\widehat{\nu}_{-j}(X'^{-j}))))\middle|\mathcal{D}_\mathrm{train}}
    \\
    &= \widehat{\beta}^j\Delta\beta^\top\e{\widetilde{X}(\nu_{-j}(X^{-j})-\widehat{\nu}_{-j}(X^{-j}))-(\nu_{-j}(X'^{-j})-\widehat{\nu}_{-j}(X'^{-j})))\middle|\mathcal{D}_\mathrm{train}}
    \\
    &= \widehat{\beta}^j\Delta\beta^\top\e{\widetilde{X}(\Delta\gamma_{-j}^\top X^{-j}-\Delta\gamma_{-j}^\top X'^{-j})\middle|\mathcal{D}_\mathrm{train}}
    \\
    &= \widehat{\beta}^j\Delta\beta^\top\e{\widetilde{X}(X^{-j}- X'^{-j})^\top\middle|\mathcal{D}_\mathrm{train}}\Delta\gamma_{-j}.
\end{align*}
Then, we observe that it depends both in the estimation error of $\beta$ and of $\gamma_{-j}$, which are both consistent. Moreover, as $j\in\mathcal{H}_0$, then $\widehat{\beta}_j\to0$. This third-order term is negligible.

For the first bias term we have that

\begin{align*}
    \mathbb{E}&\left[\left(\widehat{m}(\widetilde{X})-\widehat{m}(\widetilde{X}')\right)^2\middle|\mathcal{D}_\mathrm{train}\right]\\&= \widehat{\beta}^2 \e{\left(\nu_{-j}(X^{-j})+(X'^j-\nu_{-j}(X'^{-j})))-\widehat{\nu}_{-j}(X^{-j})-(X'^j-\widehat{\nu}_{-j}(X'^{-j})))\right)^2\middle|\mathcal{D}_\mathrm{train}}
    \\&=\widehat{\beta}^2 \e{\left(\nu_{-j}(X^{-j})-\widehat{\nu}_{-j}(X^{-j})-(\nu_{-j}(X'^{-j})-\widehat{\nu}_{-j}(X'^{-j}))\right)^2\middle|\mathcal{D}_\mathrm{train}}
     \\&=\widehat{\beta}^2 \e{\left(\Delta \gamma_{-j}^\top X^{-j}-\Delta\gamma_{-j}^\top X'^{-j}\right)^2\middle|\mathcal{D}_\mathrm{train}}
     \\&=\widehat{\beta}^2 \Delta\gamma_{-j}^\top\e{\left( X^{-j}- X'^{-j}\right)\left( X^{-j}- X'^{-j}\right)^\top\middle|\mathcal{D}_\mathrm{train}}\Delta \gamma_{-j}\\&=2\widehat{\beta}_j^2 \|\Delta \gamma_{-j}\|^2_{\Sigma_{-j}}.
\end{align*}

\end{proof}

 \begin{lemma}
Under the conditional null hypothesis, Assumption \ref{ass:LM} and Gaussian covariates, assume $\widehat{\nu}_{-j}$ and $\widehat{\beta}$ trained in different samples, we have that 

\begin{align*}
    \e{\widehat{\psi}_\mathrm{CPI}(j)}=O(1/n_\mathrm{train}^2).
\end{align*}
\end{lemma}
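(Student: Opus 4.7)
The plan is to start from the conditional bias decomposition in Proposition \ref{prop:LM-CPI-bias}, take the outer expectation, exploit the independence of $\widehat{\beta}$ and $\widehat{\gamma}_{-j}$ coming from the sample-splitting assumption, and then bound each factor separately using the Inverse-Wishart moment identities already established in the proof of Lemma \ref{lemma:bias-LOCO-linear}.

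First, under the conditional null, $\psi_\mathrm{TSI}(j,P_0)=0$ and $\beta_j=0$, so Proposition \ref{prop:LM-CPI-bias}, read without dropping the third-order term identified in its proof, yields
\begin{equation*}
\e{\tfrac{1}{2}\widehat{\psi}_\mathrm{CPI}(j)\mid \mathcal{D}_\mathrm{train}} = \widehat{\beta}_j^{\,2}\,\|\Delta\gamma_{-j}\|^2_{\Sigma_{-j}} \;+\; R_n,
\end{equation*}
where $R_n = \widehat{\beta}_j\,\Delta\beta^\top M\,\Delta\gamma_{-j}$ with $M=\e{\widetilde{X}(X^{-j}-X'^{-j})^\top}$ a fixed deterministic matrix. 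The first step is to show that $\e{R_n}=0$. Because $\widehat{\gamma}_{-j}$ is trained on a sample independent from the one used for $\widehat{\beta}$, $\Delta\gamma_{-j}$ is independent of $(\widehat{\beta}_j,\Delta\beta)$; therefore $\e{R_n} = \e{\widehat{\beta}_j\,\Delta\beta^\top}\,M\,\e{\Delta\gamma_{-j}} = 0$ by the unbiasedness of OLS.

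Second, the same independence lets the main term factorize:
\begin{equation*}
\e{\widehat{\beta}_j^{\,2}\,\|\Delta\gamma_{-j}\|^2_{\Sigma_{-j}}} \;=\; \e{\widehat{\beta}_j^{\,2}}\cdot\e{\|\Delta\gamma_{-j}\|^2_{\Sigma_{-j}}}.
\end{equation*}
Each factor is then shown to be $O(1/n_\mathrm{train})$. For the first factor, since $\beta_j=0$ we have $\e{\widehat{\beta}_j^{\,2}} = \sigma^2\,\e{[(X_\mathrm{tr}^\top X_\mathrm{tr})^{-1}]_{jj}}$, and the Inverse-Wishart identity used in Lemma \ref{lemma:bias-LOCO-linear} (rows of $X_\mathrm{tr}$ being i.i.d.\ $\mathcal{N}(0,\Sigma)$) gives $\e{(X_\mathrm{tr}^\top X_\mathrm{tr})^{-1}} = \Sigma^{-1}/(n_\mathrm{train}-p-1)$, whence $\e{\widehat{\beta}_j^{\,2}} = \sigma^2[\Sigma^{-1}]_{jj}/(n_\mathrm{train}-p-1) = O(1/n_\mathrm{train})$. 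For the second factor, since $X^j = X^{-j}\gamma_{-j} + \epsilon_j$ with $\epsilon_j\indep X^{-j}$ and $\e{\epsilon_j^2}=\Sigma_\mathrm{cond}$, the same Inverse-Wishart computation applied to this auxiliary OLS regression gives $\e{\|\Delta\gamma_{-j}\|^2_{\Sigma_{-j}}} = \Sigma_\mathrm{cond}(p-1)/(n_\mathrm{train}-p) = O(1/n_\mathrm{train})$. Multiplying these two factors and doubling to undo the $1/2$ factor yields the announced rate $O(1/n_\mathrm{train}^2)$.

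The main obstacle is not any individual bound but rather giving proper accounting to the $\approx$ in Proposition \ref{prop:LM-CPI-bias}: conditional on the training data the cross term $R_n$ is only of order $O_P(1/n_\mathrm{train}^{3/2})$, which, if it did not vanish in expectation, would dominate the $O(1/n_\mathrm{train}^2)$ main term and prevent the quadratic rate. The sample-splitting hypothesis on $\widehat{\nu}_{-j}$ and $\widehat{\beta}$ is precisely what is required to make this cross term average out to exactly zero, so that the double-robustness rate is driven entirely by the clean product of the two $O(1/n_\mathrm{train})$ error bounds.
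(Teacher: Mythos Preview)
Your proof is correct and follows the same route as the paper: invoke Proposition~\ref{prop:LM-CPI-bias}, use the independence of $\widehat{\beta}$ and $\widehat{\gamma}_{-j}$ from sample splitting to factorize the expectation, and bound each factor as $O(1/n_\mathrm{train})$ via the Inverse-Wishart moments from Lemma~\ref{lemma:bias-LOCO-linear}. Your treatment is in fact more careful than the paper's, since you explicitly show that the cross term $R_n$ vanishes in expectation (via $\e{\Delta\gamma_{-j}}=0$), whereas the paper dismisses it as a negligible third-order term in the proof of Proposition~\ref{prop:LM-CPI-bias} and does not revisit it.
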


\begin{proof}
     We observe that $\widehat{\beta}_j\sim\mathcal{N}(\beta_j,\sigma^2({X_\mathrm{tr}}^\top X_\mathrm{tr})^{-1}_j)$ and that  $\Delta\gamma_{-j}\sim\mathcal{N}(0,(\sigma^2+\Sigma_\mathrm{cond}) ({X^{-j}_\mathrm{tr}}^\top X^{-j}_\mathrm{tr})^{-1})$. Therefore, assuming the independence between both of them because they have been trained in independent training sets, we have that under the conditional null hypothesis the variance of the distribution of the bias will decrease quadratically. To remark so, we can use similar arguments as the ones used in the proof of Lemma \ref{lemma:bias-LOCO-linear} because $\mathrm{tr}(\Sigma^{1/2}(X_\mathrm{tr}^\top X_\mathrm{tr})^{-1}\Sigma^{1/2})\approx p/n_\mathrm{train}$.
 \end{proof}

\section{Assumptions and proof of Theorem \ref{th:asymp_eff} (Nonparametric Efficiency)}\label{sect:proofTheo}

In this section we present the assumptions taken from \citet{williamson2023general} to proof the asymptotic efficiency. To do so, we first introduce the same notations as in this paper. 

Denote $\mathcal{R}:=\{c(P_1-P_2):c\in [0, \infty), P_1, P_2\}$ the linear space of finite signed measures generated by the class of distributions $\mathcal{M}$. In $\mathcal{R}$, the supremum norm $\|\cdot\|_\infty$ is the supremum difference of their distribution functions. The Gâteaux derivative of $P\mapsto\mathbb{E}_P\left[\ell(f(X), y)\right]$ at $P_0$ in the direction $h\in \mathcal{R}$ is denoted by $\dot{V}(f, P_0, h)$. Let's also define the random function $g_n:(X,y)\mapsto \dot{V}(\widehat{m}, P_0; \delta_{(X,y)}-P_0)-\dot{V}(m, P_0;\delta_{(X,y)}-P_0)$, where $\delta_{(X,y)}$ is the degenerate distribution on $\{(X,y)\}$. The assumptions are either deterministic (A) or stochastic (B):

\begin{enumerate}[label=A\arabic*]
    \item (\textit{optimality}) there exists some constant $C>0$ such that, for each sequence $\widehat{m}_1, \widehat{m}_2, \ldots\in \mathcal{F}$ such that $\|\widehat{m}_n-m\|_\mathcal{F}\to 0$, $|\e{\ell(\widehat{m}_n(X), y)}-\e{\ell(m(X), y)}|\leq C\|\widehat{m}_n-m\|^2_\mathcal{F}$ for each $n$ large enough.
    \item (\textit{differentiability}) there exists some constant $\delta>0$ such that for each sequence $\epsilon_1, \epsilon_2, \ldots\in \mathbb{R}$ and $h, h_1, h_2, \ldots\in \mathcal{R}$ satisfying that $\epsilon_j\to 0$, and $\|h_j-h\|-\infty\to 0$, it holds that 
    \begin{align*}
        \underset{f\in\mathcal{F}:\|f-m\|_\mathcal{F}<\delta}{\mathrm{sup}}\left|\frac{\mathbb{E}_{P_0+\epsilon_jh_j}\left[\ell(f(X), y)\right]-\mathbb{E}_{P_0}\left[\ell(f(X), y)\right]}{\epsilon_j}-\dot{V}(f, P_0; h_j)\right|\to 0.
    \end{align*}
    \item (\textit{continuity of optimization}) $\|m_{P_0+\epsilon h}-m\|_\mathcal{F}=O(\epsilon)$ for each $h\in \mathcal{R}$ where $m_{P_0+\epsilon h}$ is the minimizer in $\mathcal{F}$ of $f\mapsto \mathbb{E}_{P_0+\epsilon h}[\ell(f(X), y)]$.
    \item (\textit{continuity of derivative}) $f\mapsto \dot{V}(f, P_0; h)$ is continuous at $m$ relative to $\|\cdot\|_\mathcal{F}$ for each $h\in \mathcal{R}$.
\end{enumerate}

\begin{enumerate}[label=B\arabic*]
    \item (\textit{minimum rate of convergence of $m$}) $\|\widehat{m}_n-m\|_\mathcal{F}=o_P(n^{-1/4})$.
    \item (\textit{weak consistency}) $\int (g_n(X,y))^2dP_0(X,y)=o_P(1)$.
\end{enumerate}

First, we note that there is no need to include the limited complexity assumption, as we are already using a cross-fitted version. This is because the importance is estimated on a separate test dataset from the training set.  

We also recall from \citet{williamson2023general} that Assumption~A1 is referred to as \emph{optimality} because we require a development that depends only on the second order, as the derivative tends to zero given that the model under consideration is a minimizer of the loss.

Similarly, as done in \citet{williamson2023general}, we will prove the asymptotic efficiency of each empirical predictiveness measure with respect to its theoretical counterpart separately. This means that both the predictiveness measure using the information of the $j$-th covariate and the one without it will converge efficiently to their respective theoretical predictiveness measures. Therefore, all the previous assumptions need to be fulfilled for the restricted model. Additionally, we will require the minimum convergence rate not only of $\widehat{m}$, which was already required for the complete model convergence, but also for the model $\widehat{\nu}_{-j}$:

\begin{enumerate}[label=B3]
    \item (\textit{minimum rate of convergence of $\nu_{-j}$}) $\|\widehat{\nu}_{n,-j}-\nu_{-j}\|_\mathcal{F}=o_P(n^{-1/4})$.
\end{enumerate}

We observe that all the above assumptions are exactly the ones required for the asymptotic efficiency of LOCO in \citet{williamson2023general}, except for replacing the required minimum rate of convergence of $\widehat{m}_{-j}$ with that of $\widehat{\nu}_{-j}$. 

We observe that these convergence rates are achieved by parametric models. They are standard in semiparametric inference. They are also achievable for nonparametric models under additional assumptions on dimensionality and regularity, and for standard machine learning models under mild assumptions.

We recall that, in our setting—the knockoffs framework (\citet{candes2017panninggoldmodelxknockoffs})—it is easier to fulfill the latter condition than the former. This is because the relationship between the input and the output is expected to be more complex than the relationship among the inputs.

For a detailed discussion of these assumptions, see \citet{williamson2023general}.

For this result, we also need local robustness to small changes in the argument of the function $m$. This condition is expressed by requiring $m$ to be Lipschitz continuous. This ensures that the error made by estimating the argument does not explode.

Finally, we require Assumption \ref{ass:generalAssumpCondSampl} to hold for the validity of the conditional sampling step.

\begin{theorem}
Under Assumption \ref{ass:generalAssumpCondSampl}, assuming that $m$ is Lipschitz and assumptions A1-A4 and B1-B3,  $\widehat{\psi}_\mathrm{SCPI}(j)$ is nonparametric efficient.
\end{theorem}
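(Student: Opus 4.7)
The natural plan is to reduce the problem to the nonparametric efficiency result of \citet{williamson2023general}. Specifically, we recast Sobol-CPI as a LOCO-type estimator by defining the plug-in restricted predictor
\begin{equation*}
\widehat{m}_{-j}^{\mathrm{SCPI}}(x^{-j}) := \frac{1}{n_\mathrm{cal}}\sum_{k=1}^{n_\mathrm{cal}} \widehat{m}(\widetilde{x}^{'(j)}_{\cdot,k}),
\end{equation*}
where the $-j$ coordinates are fixed to $x^{-j}$ and the $j$-th is conditionally sampled via \eqref{eq:cpi-sampling}. Then $\widehat{\psi}_\mathrm{SCPI}^j$ has exactly the LOCO form, differencing empirical risks of $\widehat{m}$ and $\widehat{m}_{-j}^{\mathrm{SCPI}}$. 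Since all of A1--A4 and B1--B2 are assumed, applying the framework of \citet{williamson2023general} reduces everything to verifying the analogue of B1 for the restricted predictor: that $\|\widehat{m}_{-j}^{\mathrm{SCPI}} - m_{-j}\|_\mathcal{F} = o_P(n^{-1/4})$.

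The key identity enabling this is the tower property under additive innovation (Assumption~\ref{ass:generalAssumpCondSampl}): $m_{-j}(X^{-j}) = \mathbb{E}[m(\widetilde{X}^{(j)}) \mid X^{-j}]$, where $\widetilde{X}^{(j)}\sim P_j^\star$. I will decompose the error as
\begin{equation*}
\widehat{m}_{-j}^{\mathrm{SCPI}}(x^{-j}) - m_{-j}(x^{-j}) = \underbrace{T_1}_{\text{model error}} + \underbrace{T_2}_{\text{sampler error}} + \underbrace{T_3}_{\text{Monte Carlo error}},
\end{equation*}
where $T_1 = \mathbb{E}_{P_j'}[(\widehat{m}-m)(\widetilde{X}^{'(j)})\mid X^{-j}=x^{-j}]$ captures the effect of using $\widehat{m}$ in place of $m$, $T_2 = \mathbb{E}_{P_j'}[m(\widetilde{X}^{'(j)})\mid X^{-j}=x^{-j}] - \mathbb{E}_{P_j^\star}[m(\widetilde{X}^{(j)})\mid X^{-j}=x^{-j}]$ captures the effect of replacing the true conditional law by its estimate, and $T_3$ is the Monte Carlo fluctuation from averaging over $n_\mathrm{cal}$ draws.

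For $T_1$, Jensen's inequality and B1 give $\|T_1\|_\mathcal{F} \lesssim \|\widehat{m}-m\|_\mathcal{F} = o_P(n^{-1/4})$. For $T_2$, I use the Lipschitz assumption on $m$ together with Proposition~\ref{prop:empCondSamp}: the Wasserstein-2 distance between $P_j'$ and $P_j^\star$ is bounded by the $L^2$-error of $\widehat{\nu}_{-j}$, hence $\|T_2\|_\mathcal{F} \lesssim \|\widehat{\nu}_{-j}-\nu_{-j}\|_\mathcal{F} = o_P(n^{-1/4})$ by B3. For $T_3$, a standard Hoeffding/Bernstein-type argument yields $\|T_3\|_\mathcal{F} = O_P(n_\mathrm{cal}^{-1/2})$ uniformly in $x^{-j}$ (under the boundedness implicit in A1--A4), so it suffices to take $n_\mathrm{cal}$ diverging with $n_\mathrm{cal} \gg n^{1/2}$.

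With $\widehat{m}_{-j}^{\mathrm{SCPI}}$ thus shown to satisfy the required rate, invoking the master efficiency theorem of \citet{williamson2023general} separately for the full-model and restricted-model predictiveness measures (the full-model half being entirely standard under A1--A4 and B1--B2) yields asymptotic linearity of each half with its nonparametric efficient influence function, and taking their difference gives the stated efficiency. The main obstacle I anticipate is the careful handling of $T_2$: the Wasserstein bound of Proposition~\ref{prop:empCondSamp} controls a distance between \emph{joint} laws, whereas what we need is control on a \emph{conditional} expectation pointwise in $x^{-j}$ (up to $P_0$-null sets). The Lipschitz assumption on $m$ is precisely what closes this gap via Kantorovich--Rubinstein duality, but making this rigorous, including the uniformity needed to translate a pointwise bound into the $\|\cdot\|_\mathcal{F}$ norm and interlocking it with the Monte Carlo step, is the delicate part of the argument.
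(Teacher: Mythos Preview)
Your proposal is correct and follows essentially the same approach as the paper: reduce to the efficiency theorem of \citet{williamson2023general} by verifying the $o_P(n^{-1/4})$ rate for the restricted predictor via the same three-term decomposition (model error, sampler error, Monte Carlo error). The only noteworthy difference is that the paper handles your $T_2$ more directly---rather than invoking Wasserstein/Kantorovich--Rubinstein duality, it exploits the explicit coupling built into the CPI sampler (both $\widetilde{X}_l^{(j)}$ and $\widetilde{X}_l^{'(j)}$ share the same $X^{-j}$ and the same permuted residual, differing only through $\widehat{\nu}_{-j}$ versus $\nu_{-j}$), so that $\|\widetilde{X}_l^{'(j)}-\widetilde{X}_l^{(j)}\|$ is bounded pointwise by $\|\widehat{\nu}_{-j}-\nu_{-j}\|$ and the Lipschitz bound on $m$ applies immediately, sidestepping the joint-versus-conditional issue you flagged as the delicate part.
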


\begin{proof}
Similarly as done in \citet{williamson2023general}, the asymptotic efficiency will be established by decomposing each predictiveness measure. Indeed, we are going to first prove that $$\frac{1}{n_\mathrm{test}}\sum_{i=1}^{n_\mathrm{test}}\ell(\widehat{m}(x_i), y_i)\to\e{\ell(m(X), y)}.$$
This comes directly from Theorem 2 of \citet{williamson2023general}. Then, we need to proof this efficient asymptotic convergence $$\frac{1}{n_\mathrm{test}}\sum_{i=1}^{n_\mathrm{test}}\ell\left(\frac{1}{n_\mathrm{cal}}\sum_{l=1}^{n_\mathrm{cal}}\widehat{m}(\widetilde{x}_{i,l}^{(j)'}), y_i\right)\to\e{\ell(m_{-j}(X^{-j}), y)}.$$

To achieve this, we will apply the same theorem. For this, we need to prove that the regressor converges sufficiently fast to $m_{-j}$, specifically at the rate $o_P(n^{-1/4})$. Observe that
\begin{align*}
    \left\|\frac{1}{n_\mathrm{cal}}\sum_{l=1}^{n_\mathrm{cal}}\widehat{m}(\widetilde{X}_{l}^{(j)'})-m_{-j}(X^{-j})\right\|&\leq \left\|\frac{1}{n_\mathrm{cal}}\sum_{l=1}^{n_\mathrm{cal}}\widehat{m}(\widetilde{X}_{l}^{(j)'})-\frac{1}{n_\mathrm{cal}}\sum_{l=1}^{n_\mathrm{cal}}m(\widetilde{X}_{l}^{(j)'})\right\| \\&\qquad+ \left\|\frac{1}{n_\mathrm{cal}}\sum_{l=1}^{n_\mathrm{cal}}m(\widetilde{X}_{l}^{(j)'})-\frac{1}{n_\mathrm{cal}}\sum_{l=1}^{n_\mathrm{cal}}m(\widetilde{X}_{l}^{(j)})\right\|\\&\qquad+\left\|\frac{1}{n_\mathrm{cal}}\sum_{l=1}^{n_\mathrm{cal}}m(\widetilde{X}_{l}^{(j)})-m_{-j}(X^{-j})\right\|.
\end{align*}

We first note that the last quantity exhibits the desired convergence rate by applying the CLT, and because, under Assumption \ref{ass:generalAssumpCondSampl}, $\widetilde{X}^{(j)}$ follows the desired distribution. For the first term, we have that
$$ \left\|\frac{1}{n_\mathrm{cal}}\sum_{l=1}^{n_\mathrm{cal}}m\left(\widetilde{X}_{l}^{(j)'}\right)-\frac{1}{n_\mathrm{cal}}\sum_{l=1}^{n_\mathrm{cal}}m(\widetilde{X}_{l}^{(j)})\right\|\leq \mathrm{max}_{l}\|\widehat{m}(\widetilde{X}_l^{(j)'})-m(\widetilde{X}_l^{(j)'})\|.$$ Then, it has the correct convergence rate by using the convergence rate of $\widehat{m}$. Finally, for the second term, using that $m$ is assumed $L$-Lipschitz, we have that 
\begin{align*}
    \left\|\frac{1}{n_\mathrm{cal}}\right.&\left.\sum_{l=1}^{n_\mathrm{cal}}m(\widetilde{X}_{l}^{(j)'})-\frac{1}{n_\mathrm{cal}}\sum_{l=1}^{n_\mathrm{cal}}m(\widetilde{X}_{l}^{(j)})\right\|\\&\leq \mathrm{max}_{i}\|m(\widetilde{X}_{l}^{(j)'})-m(\widetilde{X}_{l}^{(j)'})\|\\&\leq L\mathrm{max}_{i}\|\widetilde{X}_{l}^{(j)'}-\widetilde{X}_{l}^{(j)'}\|\\
    &=L\mathrm{max}_{i}\| \widehat{\nu}_{-j}(X^{-j})+(X_i-\widehat{\nu}_{-j}(X_i^{-j}))-\nu_{-j}(X^{-j})-(X_i-\nu_{-j}(X_i^{-j}))\|\\
    &\leq L\left(\|\widehat{\nu}_{-j}(X^{-j})-\nu_{-j}(X^{-j})\|-\mathrm{max}_i\|\widehat{\nu}_{-j}(X_i^{-j})-\nu_{-j}(X_i^{-j})\|\right). 
\end{align*}
We conclude by using Assumption B3. 
\end{proof}

\section{Fixing $n_\mathrm{cal}$: Proof of Proposition \ref{prop:fix_n_cal}}

\begin{proof}
The first part of the proof consists on applying the consistency of the estimates, continuous mapping theorem, and finally the Law of Large Numbers. Then, we develop the given expectation as
\begin{align*}
    \mathbb{E}&\left[\left(y-\frac{1}{n_{\mathrm{cal}}}\sum_{i=1}^{n_{\mathrm{cal}}}m(\widetilde{X}_i^{(j)})\right)^2\right]-\e{(y-m(X))^2}\\&=\e{\left(m(X)-\frac{1}{n_{\mathrm{cal}}}\sum_{i=1}^{n_{\mathrm{cal}}}m(\widetilde{X}^{(j)}_i)\right)^2}\\
    &=\frac{1}{n_{\mathrm{cal}}^2}\e{\left(\sum_{i=1}^{n_{\mathrm{cal}}}(m(X)-m(\widetilde{X}^{(j)}_i))\right)^2}\\
    &=\frac{1}{n_{\mathrm{cal}}^2}\sum_{i=1}^{n_\mathrm{cal}}\e{(m(X)-m(\widetilde{X}^{(j)}_i))^2}+\frac{2}{n_{\mathrm{cal}}^2}\sum_{i<k}\e{(m(X)-m(\widetilde{X}^{(j)}_i))(m(X)-m(\widetilde{X}^{(j)}_k))}\\
    &=\frac{1}{n_{\mathrm{cal}}}\e{(m(X)-m(\widetilde{X}^{(j)}_1))^2}+\frac{2}{n_{\mathrm{cal}}^2}\sum_{i<k}\e{(m(X)-m(\widetilde{X}^{(j)}_i))(m(X)-m(\widetilde{X}^{(j)}_k))}.
\end{align*}
For the second part, we observe that 
\begin{align*}
    \e{\right.&\left.(m(X)-m(\widetilde{X}^{(j)}_i))(m(X)-m(\widetilde{X}^{(j)}_k))}\\&=\e{m(X)(m(X)-m(\widetilde{X}^{(j)}_k))}-\e{m(\widetilde{X}^{(j)}_i)(m(X)-m(\widetilde{X}^{(j)}_k))}.
\end{align*}
The second term vanishes:
\begin{align*}
    \e{m(\widetilde{X}^{(j)}_i)(m(X)-m(\widetilde{X}^{(j)}_k))}&=\e{\e{m(\widetilde{X}^{(j)}_i)(m(X)-m(\widetilde{X}^{(j)}_k))|X^{-j}}}\\
    &=\e{\e{m(\widetilde{X}^{(j)}_i)|X^{-j}}\e{(m(X)-m(\widetilde{X}^{(j)}_k))|X^{-j}}}\\
    &=0.
\end{align*}
Now we observe that the first term is exactly the Total Sobol Index:

\begin{align*}
    \e{m(X)(m(X)-m(\widetilde{X}^{(j)}_k))}&=\e{m(X)^2-m(X)m(\widetilde{X}^{(j)}_k)}\\
    &=\e{m(X)^2}-\e{\e{m(X)m(\widetilde{X}^{(j)}_k)|X^{-j}}}\\
    &=\e{m(X)^2}-\e{\e{m(X)|X^{-j}}\e{m(\widetilde{X}^{(j)}_k)|X^{-j}}}\\&=\e{m(X)^2}-\e{m_{-j}(X^{-j})^2},
\end{align*}
and 
\begin{align*}
    \psi_\mathrm{TSI}(j, P_0)&=\e{(m(X)-m_{-j}(X^{-j}))^2}\\&=\e{m(X)^2}-2\e{m(X)m_{-j}(X^{-j})}+\e{m_{-j}(X^{-j})^2}\\
    &=\e{m(X)^2}-2\e{\e{m(X)|X^{-j}}m_{-j}(X^{-j})}+\e{m_{-j}(X^{-j})^2}\\
    &=\e{m(X)^2}-\e{m_{-j}(X^{-j})^2}.
\end{align*}

Therefore we have 
\begin{align*}
    \widehat{\psi}_\mathrm{SCPI}(j)&\xrightarrow{n_{\mathrm{train}, n_{\mathrm{test}}} \to \infty}\frac{1}{n_{\mathrm{cal}}}\e{(m(X)-m(\widetilde{X}^{(j)}_1))^2}\\&\qquad+\frac{2}{n_{\mathrm{cal}}^2}\sum_{i<k}\e{(m(X)-m(\widetilde{X}^{(j)}_i))(m(X)-m(\widetilde{X}^{(j)}_k))}\\
    &=\frac{1}{n_{\mathrm{cal}}}2\psi_\mathrm{TSI}(j, P_0)+\frac{2}{n_{\mathrm{cal}}^2}\sum_{i<k}\psi_\mathrm{TSI}(j, P_0)\\
    &=\frac{1}{n_{\mathrm{cal}}}2\psi_\mathrm{TSI}(j, P_0)+\frac{2}{n_{\mathrm{cal}}^2}\psi_\mathrm{TSI}(j, P_0)\frac{n_{\mathrm{cal}}(n_{\mathrm{cal}}-1)}{2}\\
    &=\left(1+\frac{1}{n_{\mathrm{cal}}}\right)\psi_\mathrm{TSI}(j, P_0).
\end{align*}

\end{proof}

\section{Inference}

\subsection{Variance correction: type-I error and power}\label{app:inf_lin}
\paragraph{Type-I error:}In general, \citet{locoVSShapley} introduced an additional square root term to ensure control of the type-I error. Indeed, due to the quadratic nature of the statistic, under the null hypothesis, the variance vanishes as $\mathbb{V}(\widehat{\psi}) = O(n^{-\gamma})$ with $\gamma > 1$. Therefore, to maintain a valid type-I error rate, we observe that under the null hypothesis, using Chebyshev's inequality, we have that
\begin{align*}
    \mathbb{P}_{\mathcal{H}_0}\left(\widehat{\psi}\geq z_\alpha \mathrm{se}_n+c/\sqrt{n}\right)&\leq \frac{\mathbb{V}(\widehat{\psi})}{\left(z_\alpha \mathrm{se}_n+c/\sqrt{n}\right)^2}\to 0.
\end{align*}

\paragraph{Improving the corrective term in linear settings:}We observe that using Markov's inequality we have that 

\begin{align*}
    \mathbb{P}_{\mathcal{H}_0}\left(\widehat{\psi}\geq z_\alpha \mathrm{se}_n+c/\sqrt{n}\right)&\leq \frac{\mathbb{E}(\widehat{\psi})}{\left(z_\alpha \mathrm{se}_n+c/\sqrt{n}\right)}.
\end{align*}

From Lemmas~\ref{lemm:doubl-rob-LM} and \ref{lemma:bias-LOCO-linear}, we observe that in the linear setting, it is possible to use this last inequality to derive a more powerful valid test by changing the additive term $c/\sqrt{n}$ by  $c/n^{-\gamma}$, with $\gamma < 1$ for LOCO and $\gamma < 2$ for CPI.

\paragraph{Power:}We observe that under the alternative hypothesis, using Theorem~\ref{th:asymp_eff}, we have $\widehat{\psi} \xrightarrow{\mathrm{a.s.}} \psi > 0$. In particular, this implies that

\begin{align*}
    \mathbb{P}_{\mathcal{H}_1}\left(\widehat{\psi}\geq z_\alpha \mathrm{se}_n+c/\sqrt{n}\right)\underset{n\to \infty}{\to} 1.
\end{align*}
Thus, the test is \textit{consistent}, i.e. it has power approaching to one. 
\paragraph{Standard deviation estimation:}For $\mathrm{se}_n$ in the numerical experiments, we also explored using empirical variances derived from multiple estimations via bootstrapping on the test set, without refitting the model. Specifically, we computed several means over bootstrap samples on the test set and then estimate the variance among them. Also, the sample variance (which in this case coincides with the variance estimation with influence function as seen in Section \ref{sect:MSE_if}) divided by the sample size for the others, using the relation
\begin{align*}
\mathrm{var}(\widehat{\psi}_\mathrm{SCPI}) &= \mathrm{var}\left(\frac{n_\mathrm{cal}}{n_\mathrm{cal}+1} \cdot \frac{1}{n_{\mathrm{test}}} \sum_{i=1}^{n_{\mathrm{test}}} \left[ \left( \frac{1}{n_\mathrm{cal}} \sum_{k=1}^{n_\mathrm{cal}} \widehat{m}(\widetilde{x}^{\prime(j)}_{i,k}) - y_i \right)^2 - \left( \widehat{m}(x_i) - y_i \right)^2 \right] \right) \\
&= \frac{1}{n_{\mathrm{test}}} \cdot \mathrm{var} \left( \frac{n_\mathrm{cal}}{n_\mathrm{cal}+1} \left[ \left( \frac{1}{n_\mathrm{cal}} \sum_{k=1}^{n_\mathrm{cal}} \widehat{m}(\widetilde{x}^{\prime(j)}_{i,k}) - y_i \right)^2 - \left( \widehat{m}(x_i) - y_i \right)^2 \right] \right).
\end{align*}

\subsection{Sobol-CPI-Wilcoxon algorithm}

In Algorithm~\ref{alg:scpi_wcx}, we provide an explicit regression-based conditional sampler to give a concrete, implementable procedure; however, any sampler could be used, and the type-I error control guarantees from Theorem~\ref{th:wcx_TV} (based on the Total Variation distance) still hold.

\begin{algorithm}[H]
\caption{Nonparametric Sobol-CPI (\texttt{Sobol-CPI-ST/Wilcox})}
\label{alg:scpi_wcx}
\begin{algorithmic}[1]
\item[] \textbf{Input:} Model $\widehat{m}$, test data $\{(X_i,y_i)\}_i^{n_{\mathrm{test}}}$, control level $\alpha$ and a feature $j$
    \item[] \quad \textbf{(Regression)} Fit $\widehat{\nu}_j \approx \mathbb{E}[X^j \mid X^{-j}]$
    \item[] Compute the residuals $\widehat{\epsilon}_{j} = X^j -\widehat{\nu}_j(X^{-j})$
    \item[] Construct 
    \[
        \widetilde{X}'^{(j)}
        = \widehat{\nu}_j(X^{-j})
        + \widehat{\epsilon}_{j}^\mathrm{permuted}
    \]
    \item[] Compute nonparametric paired test (Sign-test/Wilcoxon) between 
    \[
        \left\{l\bigl(\widehat{m}(X_{i}), y_i\bigr)\right\}_i^{n_\mathrm{test}}
        \text{ and }
        \left\{l\bigl(\widehat{m}(\widetilde{X}'^{(j)}_{i}), y_i\bigr)\right\}_i^{n_\mathrm{test}}
    \]
\item[] \textbf{Return} the computed p-value
\end{algorithmic}
\label{alg:SCPI_wcx}
\end{algorithm}

\subsection{Proof of Theorem \ref{th:wcx_TV} (Wilcoxon and Sign-test)}

We start by recalling that the \emph{total variation} for any probability measures $P_1$ and $P_2$ defined on the same probability space is given by the supremum of the difference of the measures over all measurable sets:
\begin{align*}
    d_\mathrm{TV}(P_1, P_2):=\mathrm{sup}_A |P_1(A)-P_2(A)|.
\end{align*}

We present the proof explicitly for the Sign test for readability, but the argument for the Wilcoxon test is entirely analogous. Following \citet{Berrett}, we observe that the total variation, conditioned on all the random variables used for the test, is exactly the distance between the estimated and original conditional distributions.
\begin{align*}
    d_\mathrm{TV}&\left\{\left(\widetilde{X}'^{(j)j}, X_1^j, \ldots, X_{n_\mathrm{test}}^j)\mid \{X^{-j}_i, y_i\}_i^{n_\mathrm{test}}, \mathcal{D}_\mathrm{train}\right), \left(\widetilde{X}^{(j)j}, X_1^j, \ldots, X_{n_\mathrm{test}}^j)\mid \{X^{-j}_i, y_i\}_i^{n_\mathrm{test}}, \mathcal{D}_\mathrm{train}\right)\right\}\\
    &=d_\mathrm{TV}\left\{\left(\widetilde{X}'^{(j)j})\mid \{X^{-j}_i\}_i^{n_\mathrm{test}}, \mathcal{D}_\mathrm{train}\right), \left(\widetilde{X}^{(j)j})\mid \{X^{-j}_i\}_i^{n_\mathrm{test}}, \mathcal{D}_\mathrm{train}\right)\right\}\\
    &=d_\mathrm{TV}\left\{P'^{n_\mathrm{test}}_j, P^{\star n_\mathrm{test}}_j\right\},
\end{align*}
where we have used the assumption that the conditional sampler was estimated from a sample independent of the training sample used to fit $\widehat{m}$.

Let us define the set 
\begin{align*}
A:= \left\{\{\widetilde{X}^{(j)j}_i\}_i\in \mathcal{X}^{\times n_\mathrm{test}}\mid \sum l\left(\widehat{m}(\widetilde{X}^{(j)}_i, y_i)-l(\widehat{m}(X_i), y_i)\right)\geq q^{1-\alpha}_{\mathrm{Bin}(n_\mathrm{test}, 0,5)}\right\}    
\end{align*}
where $q^{1-\alpha}_{\mathrm{Bin}(n_\mathrm{test}, 0.5)}$ denotes the $(1-\alpha)$-quantile of a binomial distribution with parameters $n_\mathrm{test}$ and success probability $0.5$. Note that this corresponds to the statistic of the Sign test and can be generalized to the Wilcoxon test.

Finally, we conclude by noting that
\begin{align*}
    \mathbb{P}_{\mathcal{H}_0}&\left(\widehat{\phi}_\mathrm{ST}=1\mid \{X^{-j}_i, y_i\}_i^{n_\mathrm{test}}, \mathcal{D}_\mathrm{train}\right)
    \\&\leq P^\star_j(A)+d_\mathrm{TV}\left\{\left(\widetilde{X}'^{(j)j}, X_1^j, \ldots, X_{n_\mathrm{test}}^j)\mid \{X^{-j}_i, y_i\}_i^{n_\mathrm{test}}, \mathcal{D}_\mathrm{train}\right), \left(\widetilde{X}^{(j)j}, X_1^j, \ldots, X_{n_\mathrm{test}}^j)\mid \{X^{-j}_i, y_i\}_i^{n_\mathrm{test}}, \mathcal{D}_\mathrm{train}\right)\right\}
    \\&\leq  \alpha + d_\mathrm{TV}\left\{P'^{n_\mathrm{test}}_j, P^{\star n_\mathrm{test}}_j\right\}.
\end{align*}

\section{MSE variance estimation with influence functions}\label{sect:MSE_if}

From Theorem \ref{th:asymp_eff}, it is possible to estimate the variance using the influence function. Indeed, the variance is given by 
\[
\tau_j^2 := \mathbb{E}\left[\varphi_j^2(X, y)\right],
\]
where $\varphi_j$ is the influence function of $\psi_j$, which is given by 
\[
\varphi_j: (X,y) \mapsto \dot{V}(m_{-j}, P_0, \delta_{(X,y)} - P_0) - \dot{V}(m, P_0, \delta_{(X,y)} - P_0).
\]
Here, $\dot{V}(f, P_0; h)$ stands for the Gâteaux derivative of $P \mapsto \mathbb{E}_P\left[\ell(f(X), y)\right]$ in the direction $h \in \mathcal{R}$, with 
\[
\mathcal{R} := \{c(P_1 - P_2) : c \in [0, \infty), P_1, P_2 \in \mathcal{M}\}
\]
being the finite signed measures generated by $\mathcal{M}$. Therefore, it is possible to estimate the variance as a simple plug-in:
\[
\widehat{\tau}^2_j := \frac{1}{n}\sum_{i=1}^n \left[\dot{V}(\widehat{m}_{-j}, P_n; \delta_{(X^{-j}_i, y_i)} - P_n) - \dot{V}(\widehat{m}, P_n; \delta_{(X_i,y_i)} - P_n)\right]^2.
\]

In Appendix A of \citet{williamson2023general}, they propose a simple method to compute the influence function when the predictiveness measure comes from standardized V-measures. We observe that, in general, computing the empirical variance and the variance using this plug-in version do not coincide. Nevertheless, in this section, we easily prove that they do coincide when using the MSE. Indeed, we observe that
 
\begin{align*}
    \widehat{\tau}^2_j&=\frac{1}{n}\sum_{i=1}^n\left[\dot{V}(\widehat{m}_{-j}, P_n; \delta_{(x_i, y_i)}-P_n)-\dot{V}(\widehat{m}, P_n; \delta_{(x_i, y_i)}-P_n)\right]^2\\
    &=\frac{1}{n}\sum_{i=1}^n\left[\left(y_i-\widehat{m}_{-j}(x^{-j}_i)\right)^2-\frac{1}{n}\sum_{i=1}^n\left(y_i-\widehat{m}_{-j}(x^{-j}_i)\right)^2-\left(y_i-\widehat{m}(x_i)\right)^2\right.\\ &\left.\qquad+\frac{1}{n}\sum_{i=1}^n\left(y_i-\widehat{m}(x_i)\right)^2\right]^2
    \\
    &=\frac{1}{n}\sum_{i=1}^n\left[\left(y_i-\widehat{m}_{-j}(x_i^{-j})\right)^2-\left(y_i-\widehat{m}(x_i)\right)^2\right.\\ &\left.\qquad-\frac{1}{n}\sum_{i=1}^n\left[\left(y_i-\widehat{m}_{-j}(x_i^{-j})\right)^2-\left(y_i-\widehat{m}(x_i)\right)^2\right]\right]^2\\
    &= \widehat{\mathrm{var}}\left(\ell(y, \widehat{m}_{-j}(x_i^{-j}))-\ell(y, \widehat{m}(x_i)\right).
\end{align*}

\section{Some lemmas used in the proofs}

\begin{lemma}[Conditional null hypothesis]\label{lemma:nullHyp} Under Assumption \ref{ass:regressModel}, the j-th covariate is independent of the output $y$ conditionally on the rest of covariates if and only if there exists a measurable function $m_{-j}\in\mathcal{F}_{-j}$ such that $m(X)=m_{-j}(X^{-j})$.    
\end{lemma}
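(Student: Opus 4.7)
The plan is to handle the two implications separately, with both following rather directly from the additive noise structure $y=m(X)+\epsilon$ with $\epsilon\indep X$ and $\mathbb{E}[\epsilon]=0$.

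For the reverse direction, suppose $m(X)=m_{-j}(X^{-j})$ for some $m_{-j}\in\mathcal{F}_{-j}$. Then $y=m_{-j}(X^{-j})+\epsilon$. Conditionally on $X^{-j}=x^{-j}$, the random variable $y$ is the sum of a constant and a noise term $\epsilon$ independent of the whole vector $X$, hence independent of $X^j$ given $X^{-j}$. Formally, for any bounded measurable $\varphi$, I would write $\mathbb{E}[\varphi(y)\mid X]=\mathbb{E}[\varphi(m_{-j}(X^{-j})+\epsilon)\mid X]$, which equals $\mathbb{E}[\varphi(y)\mid X^{-j}]$ by the independence of $\epsilon$ and $X$. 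This characterizes $X^j\indep y\mid X^{-j}$.

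For the forward direction, assume $X^j\indep y\mid X^{-j}$. A standard consequence of conditional independence is that $\mathbb{E}[y\mid X]=\mathbb{E}[y\mid X^{-j}]$ almost surely. Using Assumption~\ref{ass:regressModel}, the left-hand side simplifies:
\begin{equation*}
\mathbb{E}[y\mid X]=\mathbb{E}[m(X)+\epsilon\mid X]=m(X)+\mathbb{E}[\epsilon]=m(X),
\end{equation*}
since $\epsilon\indep X$ and $\mathbb{E}[\epsilon]=0$. Setting $m_{-j}(X^{-j}):=\mathbb{E}[y\mid X^{-j}]$, which is by construction a measurable function of $X^{-j}$ (hence lies in $\mathcal{F}_{-j}$ under the natural measurability assumption on this space), yields $m(X)=m_{-j}(X^{-j})$ almost surely.

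There is no serious obstacle here: the argument is a routine application of the tower property and the independence of the additive noise. The only point requiring minor care is the implication \emph{conditional independence $\Rightarrow$ equality of conditional expectations with respect to the larger and smaller $\sigma$-algebras}, which is standard. I would state it as a short lemma or simply invoke it, and then let the additive-noise identity $\mathbb{E}[y\mid X]=m(X)$ close the argument.
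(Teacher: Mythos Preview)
Your proof is correct. The reverse direction (measurability $\Rightarrow$ conditional independence) matches the paper's argument essentially verbatim. For the forward direction, however, you take a different and more direct route: you invoke the standard fact that $X^j\indep y\mid X^{-j}$ implies $\mathbb{E}[y\mid X]=\mathbb{E}[y\mid X^{-j}]$, and then identify the left-hand side with $m(X)$ via the additive-noise assumption. The paper instead computes $\mathbb{E}[y^2\mid X^{-j}]$ in two different ways to obtain $\mathbb{E}[m(X)^2\mid X^{-j}]=\bigl(\mathbb{E}[m(X)\mid X^{-j}]\bigr)^2$, and then appeals to the equality case in Jensen's inequality (equivalently, zero conditional variance) to conclude that $m(X)$ is $\sigma(X^{-j})$-measurable. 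Your argument is shorter and uses only first moments; the paper's second-moment detour is unnecessary once one recalls the reduction of conditional expectation under conditional independence, which you correctly cite as the key lemma.
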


\begin{proof}
    Firstly, we assume that $m(X)=m_{-j}(X^{-j})$, or equivalently, that $Y=m(X)+\epsilon=m_{-j}(X^{-j})+\epsilon$. Therefore, using that $\epsilon$ is independent from $X$ and that $m_{-j}(X^{-j})$ is constant conditionally on $X^{-j}$, then $y\perp\!\!\!\perp X^{j}|X^{-j}$.

    To prove the other way, we first observe that 
    \begin{align*}
        \e{y^2|X^{-j}}=\e{(m(X)+\epsilon)^2|X^{-j}}=\e{m(X)^2|X^{-j}}+\sigma^2,
    \end{align*}
    using that $\epsilon$ is centered and independent of $X$.
    On the other hand, we observe that using the conditional independence and also that $\epsilon$ is centered and independent of $X$ that 
    \begin{align*}
        \e{y^2|X^{-j}}&=\e{y(m(X)+\epsilon)|X^{-j}}\\&=\e{y|X^{-j}}\e{m(X)|X^{-j}}+\e{y\epsilon|X^{-j}}\\&=\e{m(X)|X^{-j}}^2+\sigma^2.
    \end{align*}
    Then, we obtained that as both quantities are equivalent that $\e{m(X)^2|X^{-j}}=\e{m(X)|X^{-j}}^2$. We observe that Jensen's inequality with an strict convex function is only achieved with degenerate distributions. Therefore, $m(X)$ is $\sigma(X^{-j})$-measurable and therefore there exists a measurable function that we denote $m_{-j}$ such that $m(X)=m_{-j}(X^{-j})$. 
    
\end{proof}

\begin{lemma}[LM for $y|X^{-j}$]
Under Assumption \ref{ass:LM} and Gaussian covariate, we have that $y=X^{-j}\beta'+\epsilon'$ with $\epsilon'$ independent from $X^{-j}$ and $\epsilon'\overset{\mathrm{i.i.d.}}{\sim}\mathcal{N}(0, \sigma^2+\Sigma_{\mathrm{cond}}{\beta^j}^2)$.\label{lemma:LMX_without_j}
\end{lemma}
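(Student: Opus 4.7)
}
The plan is to decompose $X^j$ via its conditional distribution given $X^{-j}$, substitute into the linear model, and re-group the terms. Because $X$ is assumed Gaussian, Lemma \ref{lemma:GaussianAss} (or equivalently the explicit Gaussian conditioning recalled in the main text) gives the additive decomposition
\begin{equation*}
X^j = \nu_{-j}(X^{-j}) + \eta_j,\qquad \eta_j \indep X^{-j},\qquad \eta_j \sim \mathcal{N}(0, \Sigma^j_\mathrm{cond}),
\end{equation*}
where $\nu_{-j}(X^{-j}) = \mu_j + \Sigma_{j,-j}\Sigma^{-1}_{-j,-j}(X^{-j}-\mu_{-j})$ is an affine function of $X^{-j}$. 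Writing $\nu_{-j}(X^{-j}) = \gamma_{-j}^\top X^{-j} + c_j$ with $\gamma_{-j} := \Sigma^{-1}_{-j,-j}\Sigma_{-j,j}$ and $c_j := \mu_j - \gamma_{-j}^\top \mu_{-j}$ makes the subsequent substitution transparent.

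The next step is to plug this into $y = X\beta + \epsilon = X^{-j}\beta^{-j} + X^j \beta^j + \epsilon$, which yields
\begin{equation*}
y = X^{-j}\bigl(\beta^{-j} + \beta^j\, \gamma_{-j}\bigr) + c_j \beta^j + \beta^j\eta_j + \epsilon.
\end{equation*}
Absorbing $c_j\beta^j$ into an intercept (or, as in the main text, taking $\mu=0$ so that $c_j=0$), one sets
\begin{equation*}
\beta' := \beta^{-j} + \beta^j\, \gamma_{-j}, \qquad \epsilon' := \beta^j \eta_j + \epsilon.
\end{equation*}

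Finally I verify the claimed properties of $\epsilon'$. Independence from $X^{-j}$ follows from the fact that $\eta_j \indep X^{-j}$ by Assumption \ref{ass:generalAssumpCondSampl} and $\epsilon \indep X$ (in particular $\epsilon \indep X^{-j}$) by Assumption \ref{ass:LM}, combined with the independence of $\epsilon$ and $\eta_j$ (the latter being a measurable function of $X$). Gaussianity and the variance $\sigma^2 + \Sigma^j_\mathrm{cond}(\beta^j)^2$ are immediate from the fact that $\epsilon'$ is the sum of two independent centered Gaussians $\beta^j\eta_j \sim \mathcal{N}(0, (\beta^j)^2\Sigma^j_\mathrm{cond})$ and $\epsilon \sim \mathcal{N}(0, \sigma^2)$. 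The i.i.d.\ structure across observations is inherited from that of the original $\epsilon_i$'s and of the samples of $X$. There is no genuine obstacle here; the only point that requires slight care is the independence $\epsilon \indep \eta_j$, which one obtains because $\eta_j$ is a deterministic function of $X$ and $\epsilon \indep X$ by Assumption \ref{ass:LM}.
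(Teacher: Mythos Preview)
Your proposal is correct and follows essentially the same route as the paper: decompose $X^j=\nu_{-j}(X^{-j})+\eta_j$ via Gaussian conditioning, substitute into $y=X^{-j}\beta^{-j}+X^j\beta^j+\epsilon$, and identify $\beta'=\beta^{-j}+\beta^j\gamma_{-j}$ and $\epsilon'=\beta^j\eta_j+\epsilon$. If anything, you are slightly more careful than the paper about the intercept $c_j$ and about justifying why $\epsilon\indep\eta_j$ (hence the joint independence from $X^{-j}$).
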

\begin{proof}
Indeed, we can write 
    \begin{align*}
        y&=X\beta+\epsilon\\
        &= X^{-j}\beta_{-j}+X^j\beta^j+\epsilon\\
        &=  X^{-j}\beta_{-j}+(X^j-\nu_{-j}(X^{-j})+\nu_{-j}(X^{-j}))\beta^j+\epsilon\\
        &= X^{-j}\beta_{-j}+\nu_{-j}(X^{-j})+\left((X^j-\nu_{-j}(X^{-j}))\beta^j+\epsilon\right).
    \end{align*}
    Using that $X$ is Gaussian we have that $\nu_{-j}(X^{-j})=X^{-j}\gamma_{-j}$, therefore, we can write $\beta'=\beta_{-j}+\gamma_{-j}$. Finally, we have that $(X^j-\nu_{-j}(X^{-j}))$ is independent of $X^{-j}$ using the Gaussianity and that $(X^j-\nu_{-j}(X^{-j}))\sim \mathcal{N}(0, \Sigma_\mathrm{cond})$. Therefore, we have that $\epsilon'=(X^j-\nu_{-j}(X^{-j}))\beta^j+\epsilon\sim \mathcal{N}(0, \sigma^2+ (\beta^j)^2\Sigma_\mathrm{cond})$.
\end{proof}

\section{Additional experiments}

\subsection{Double Robustness of Sobol-CPI and nonnull bias}

In this section, we study a linear Gaussian setting. In this case, both methods need to implement computationally similar procedures because $\widehat{m}$, $\widehat{m}_{-j}$, and $\widehat{\nu}_{-j}$ are all linear. We have $y = X\beta + \epsilon$ with $\beta_0 = 0$, $\beta_{1:p-1} \sim \mathcal{N}(0, 25I_{p-1})$, $\epsilon \sim \mathcal{N}(0, 1)$, and $X \sim \mathcal{N}(0, \Sigma)$ where $\Sigma_{i,j} = 0.6^{|i-j|}$ and $p = 20$, we estimate $\widehat{\psi}_\mathrm{Sobol-cpi(1)}$, $\widehat{\psi}_\mathrm{Sobol-cpi(100)}$(i.e. with $n_\mathrm{cal}=100$) and $\widehat{\psi}_\mathrm{LOCO}$ using linear models trained with $n_\mathrm{train} = 50$ and averaged over $n_\mathrm{test} = 100000$.
On the top of Figure \ref{fig:append_doub-rob}, the bias when estimating the importance on the null coordinate, while on the bottom, it corresponds to a nonnull coordinate. On the left, we use the same training set to estimate $\widehat{m}$, $\widehat{m}_{-j}$, and $\widehat{\nu}_{-j}$. Conversely, on the right, we use different training samples while maintaining $n_\mathrm{train} = 50$. The histograms are based on 1000 runs.

\cref{fig:append_doub-rob} illustrates that CPI converges faster under the conditional null hypothesis due to double robustness, and that a significant loss is incurred by using the data splitting method proposed in \citet{williamson2023general}. However, we also observe that by increasing $n_\mathrm{cal}$, the double robustness of CPI can still be preserved. Additionally, the bias distribution for the non-null covariates is similar across methods. Consequently, there is improved variable selection without sacrificing variable importance.

\begin{figure}[htbp]
  \centering

  \begin{subfigure}{0.45\textwidth}
    \centering
    \includegraphics[width=\linewidth]{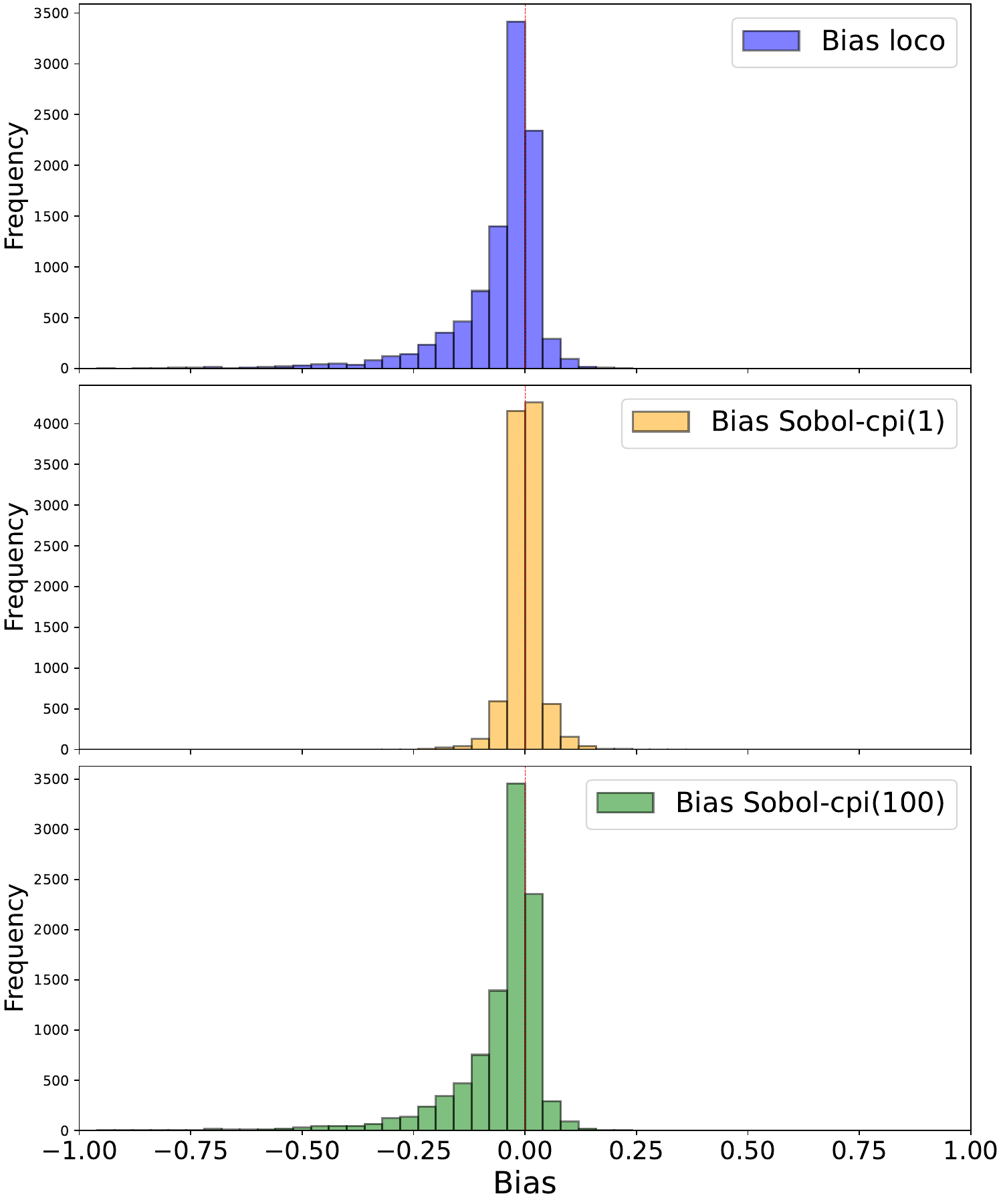}
    \caption{Same training sample, null covariate}
  \end{subfigure}
  \hfill
  \begin{subfigure}{0.45\textwidth}
    \centering
    \includegraphics[width=\linewidth]{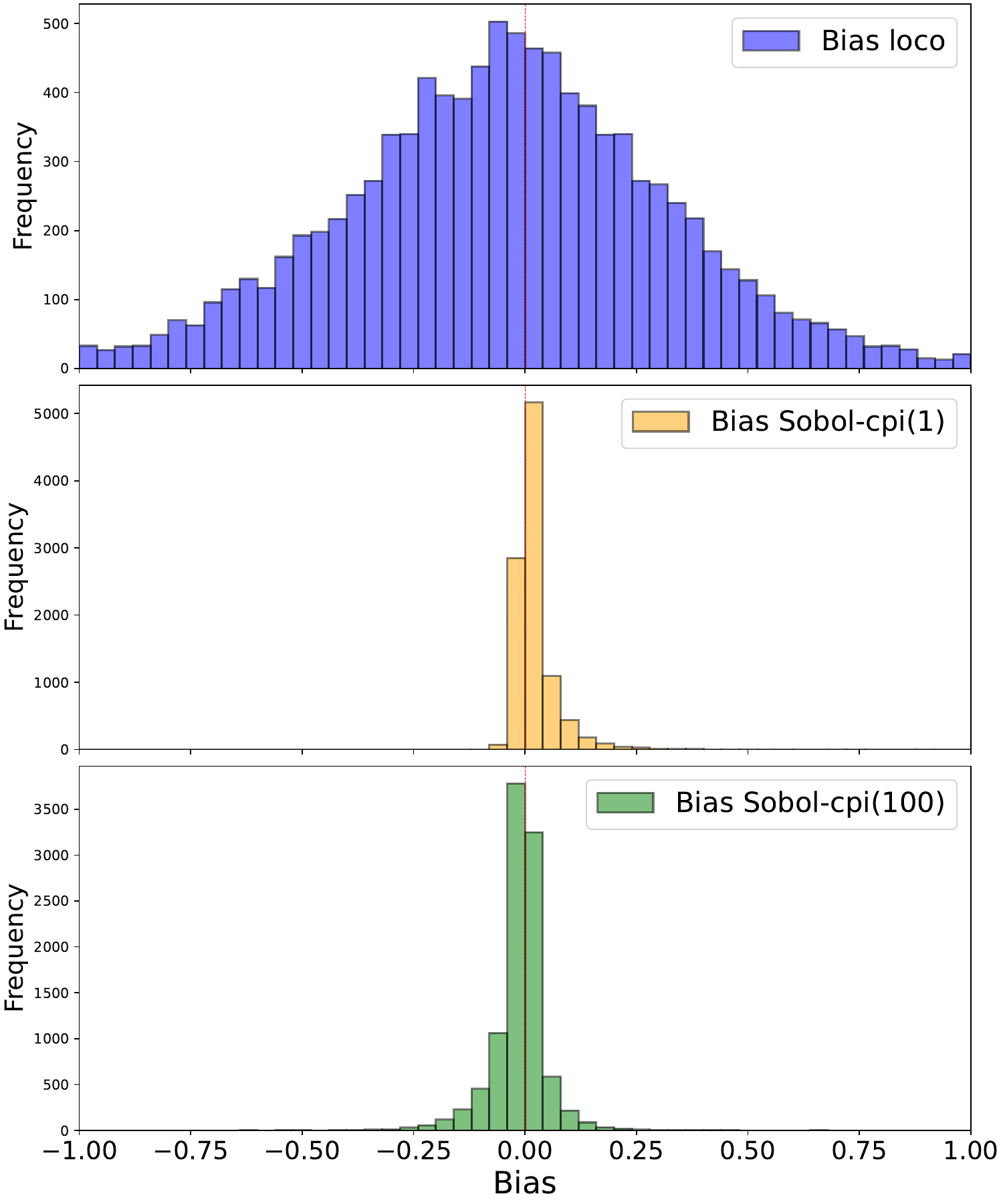}
    \caption{Different training sample, null covariate}
  \end{subfigure}

  \vskip\baselineskip

  \begin{subfigure}{0.45\textwidth}
    \centering
    \includegraphics[width=\linewidth]{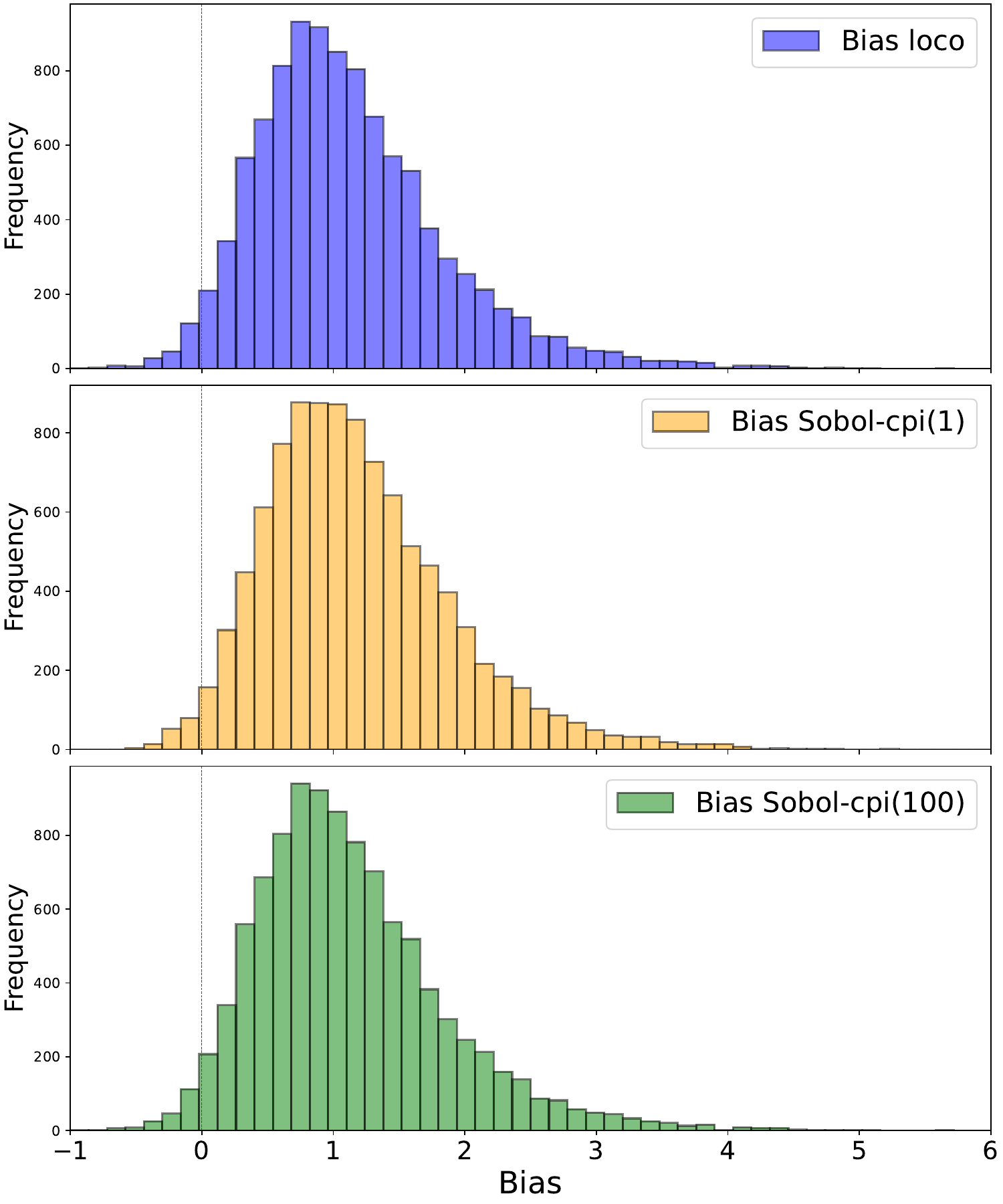}
    \caption{Same training sample, nonnull covariate}
  \end{subfigure}
  \hfill
  \begin{subfigure}{0.45\textwidth}
    \centering
    \includegraphics[width=\linewidth]{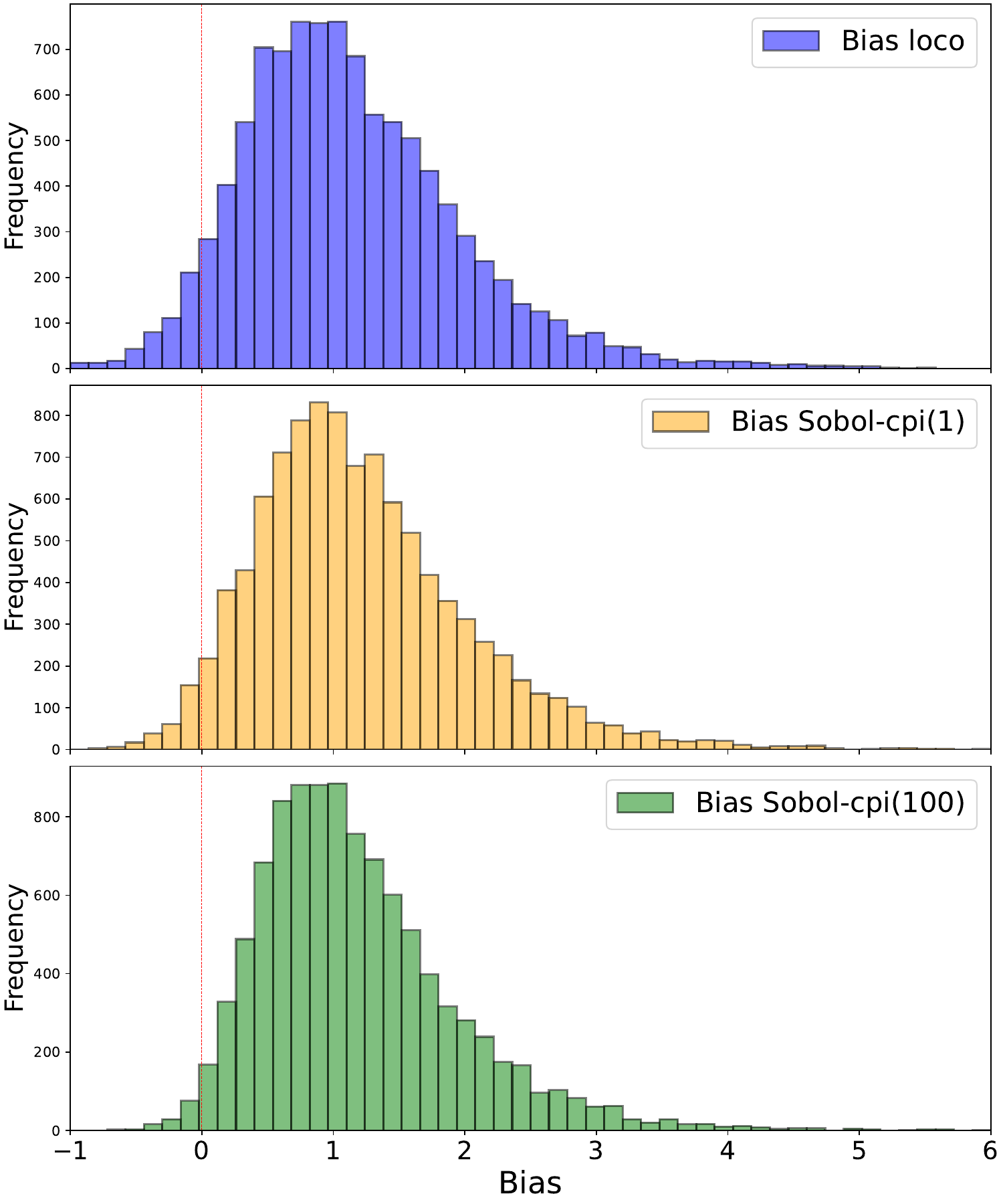}
    \caption{Different training sample, nonnull covariate}
  \end{subfigure}

  \caption{\textbf{Double robustness of the Sobol-CPI:} 
   The empirical bias distribution of LOCO, Sobol-CPI(1), and Sobol-CPI(100). From (a) and (b), we observe the benefits of using a CPI-based approach, as its double robustness results in lower bias. In (c) and (d), we see that the estimation error for a non-null covariate is similar. Comparing (a) and (b), we observe the negative effect of data splitting.
  }
  \label{fig:append_doub-rob}
\end{figure}

    



\subsection{Effect of $n_\mathrm{cal}$}\label{subsec:exp-n-cal}

In general, in order to get a valid variable importance that is asymptotically efficient, Theorem \ref{th:asymp_eff} shows that we need to take a calibration set size that increases with $n$. Nevertheless, as seen in Proposition \ref{prop:fix_n_cal}, with $\ell=\ell_2$, it is possible to fix the calibration set size and correct the bias generated. Moreover, when $n_\mathrm{cal}$ is fixed to 1, as it is just a correction of CPI, it benefits from its double robustness, making it easier to separate null covariates from important ones. In this way, this \textit{hyperparameter} represents a trade-off between \textit{variable selection} and \textit{variable importance}. With a large $n_\mathrm{cal}$, we benefit from the asymptotic efficiency, obtaining a more robust estimate for the important covariates; however, for the null covariates, it yields a worse estimate. Indeed, standard CPI does not converge and is not asymptotically efficient when the importance is not null. This can be seen because the optimality assumption (Assumption A1 in \cref{sect:proofTheo}) does not hold. 

To avoid the first-order contribution of having to estimate the regressor $m$, which is the maximizer of $f \mapsto \mathbb{E}[\ell(f(X), y)]$ over $\mathcal{F}$, it is reasonable to assume that 
\[
\frac{d}{d\epsilon}\mathbb{E}[\ell(m_\epsilon(X), y)]\big|_{\epsilon=0}=0,
\]
for any smooth path $\{m_\epsilon:-\infty<\epsilon<\infty\}\subset \mathcal{F}$. Nevertheless, this no longer holds for CPI. Indeed, we are not reoptimizing a learner with the empirical conditional distribution; we are only substituting the optimizer of the original distribution on another distribution. Therefore, this first-order term is not expected to vanish.

This trade-off between variable selection and variable importance can be observed, for instance, in \cref{fig:n_cal}. In this Figure, we study the estimated importance of two covariates in a modified version of the nonlinear setting from \citet{scornet2022mda}: $ y = X_0 X_1 \mathbb{I}_{X_2 > 0} + 2 X_3 X_4 \mathbb{I}_{X_2 < 0} $, with $X \sim \mathcal{N}(\mu, \Sigma) $, where $ \Sigma_{i,j} = \rho^{|i-j|}$, $p = 50 $, $n=10000$ and $\mu = \mathbf{0}$. The $x$-axis represents $\rho$, while the $y$-axis represents the estimated importance. The experiment is repeated 30 times.  

For the important covariate, we observe slightly better results in estimating importance. However, for the null covariates, the results are slightly worse when $n_\mathrm{cal}$ is larger. This is also remarkable in \cref{fig:corr_nonlin}.

\begin{figure}[htbp]
    \centering
        \includegraphics[width=0.8\textwidth]{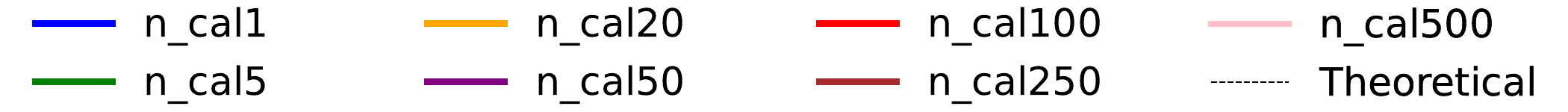}
        \includegraphics[width=0.9\textwidth]{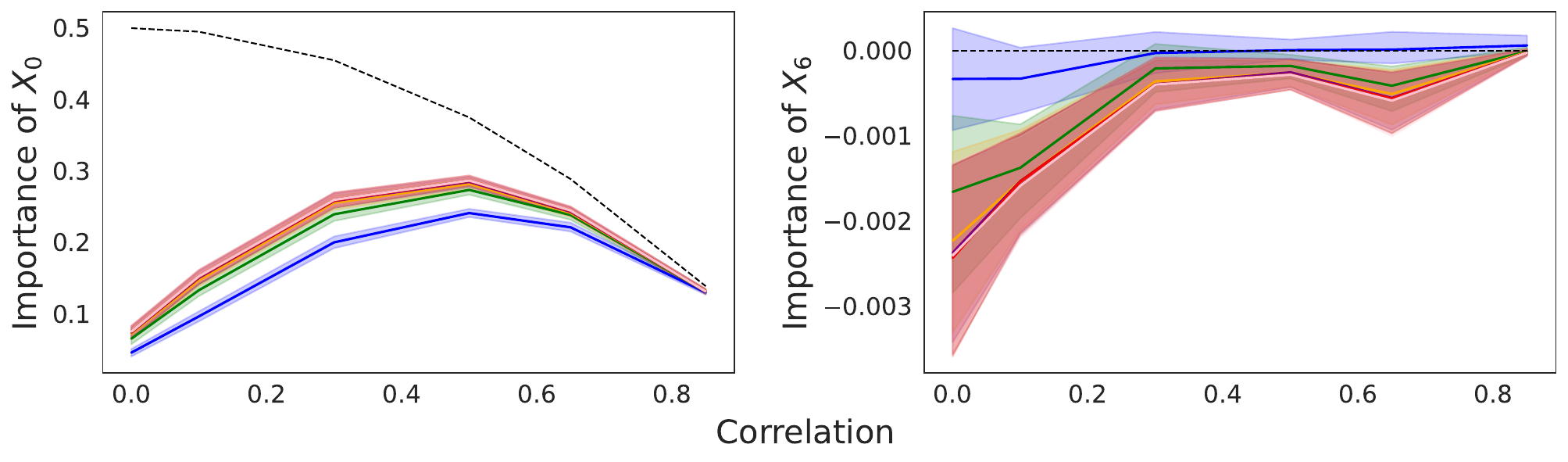}
    \caption{\textbf{Calibration set size effect as a trade-off between Variable Importance and Variable Selection:}  
Total Sobol Index estimation in a nonlinear setting. The first figure represents an important covariate ($X_0$), while the second represents a non-important covariate ($X_6$). We observe that with a larger $n_\mathrm{cal}$, the importance estimation of the non-null covariate is slightly improved, enhancing variable importance. However, for the null covariate, there is a slightly greater bias, making variable selection less accurate. }
     \label{fig:n_cal}
\end{figure}

\subsection{Correlation}\label{subsec:corr}
In this section, \( \widehat{m} \) and \( \widehat{m}_{-j} \) are Gradient Boosting models, while \( \widehat{\nu}_{-j} \) is a Lasso model. In this experiments, $X \sim \mathcal{N}(\mu, \Sigma) $, where $ \Sigma_{i,j} = \rho^{|i-j|}$, $p = 50 $ and $\mu = \mathbf{0}$. The $x$-axis is for $\rho$.

In \cref{fig:corr_poly}, we examine a polynomial setting where the important covariates are randomly sampled with a sparsity of $0.1$, with $n=1000$. On the left the AUC and the right is the mean bias across the null covariates. This experiment is run over 30 repetitions. We observe that the AUC values are similar, but the LOCO method fails to achieve null importance for the null covariates. Additionally, the LOCO method is significantly more computationally expensive.

\begin{figure}[htbp]
    \centering
        \includegraphics[width=0.4\textwidth]{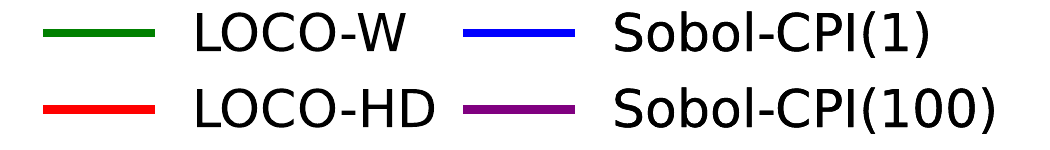}
        \includegraphics[width=0.7\textwidth]{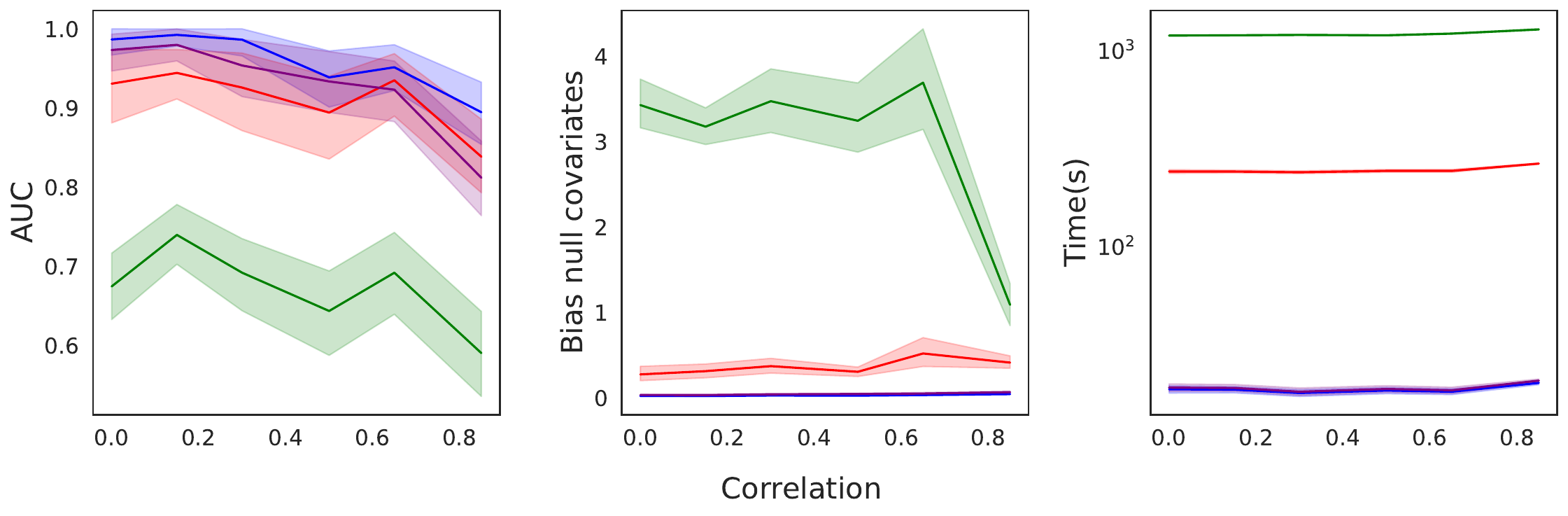}
     \caption{\textbf{Correlation Effect in a Polynomial Setting:} For all correlation values, Sobol-CPI achieves better discrimination of important covariates, assigns no importance to null covariates, and is significantly more computationally efficient. }
    \label{fig:corr_poly}
\end{figure}

In \cref{fig:corr_nonlin}, we use the same standard nonlinear setting: $y = X_0 X_1 \mathbb{I}_{X_2 > 0} + 2 X_3 X_4 \mathbb{I}_{X_2 < 0},$
with $n=10000$. On the top, the first two figures display the estimated importance of the important covariates, while the third figure represents an unimportant covariate. They are theoretically obtained in \cref{ex:LOCO-nonlin}. On the bottom, the left figure shows the AUC, the middle figure presents the mean bias across the null covariates, and the right figure is the computational time. This experiment is conducted over 10 repetitions.

The first two figures show that the importance scores from the Sobol-CPI method with a larger \( n_\mathrm{cal} \) are more accurate, though still comparable to the LOCO method. From the third figure, we observe that Sobol-CPI maintains double robustness with larger \( n_\mathrm{cal} \), while LOCO exhibits a large bias. 

In the bottom row, the first figure demonstrates the superior discriminant power of CPI-based methods. The second figure highlights the bias of the LOCO method for null covariates, and the third figure illustrates that CPI-based methods are much faster than the LOCO methods. This is because CPI-based methods avoid refitting a Gradient Boosting model for each covariate, using instead a Lasso model.


 \begin{figure}[htbp]
  \centering

  \begin{subfigure}{0.5\textwidth}
    \centering
    \includegraphics[width=\linewidth]{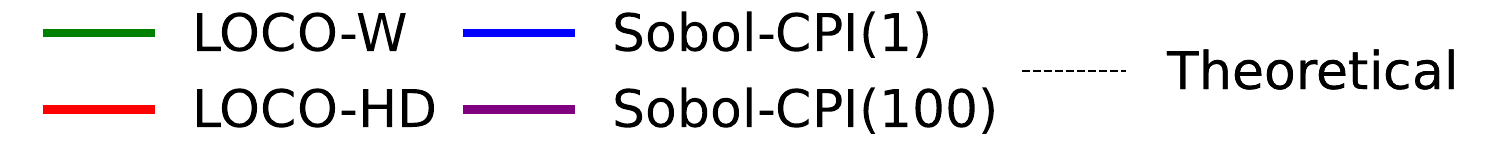}
  \end{subfigure}

  \vspace{0.8\baselineskip}

  \begin{subfigure}{0.7\textwidth}
    \centering
    \includegraphics[width=\linewidth]{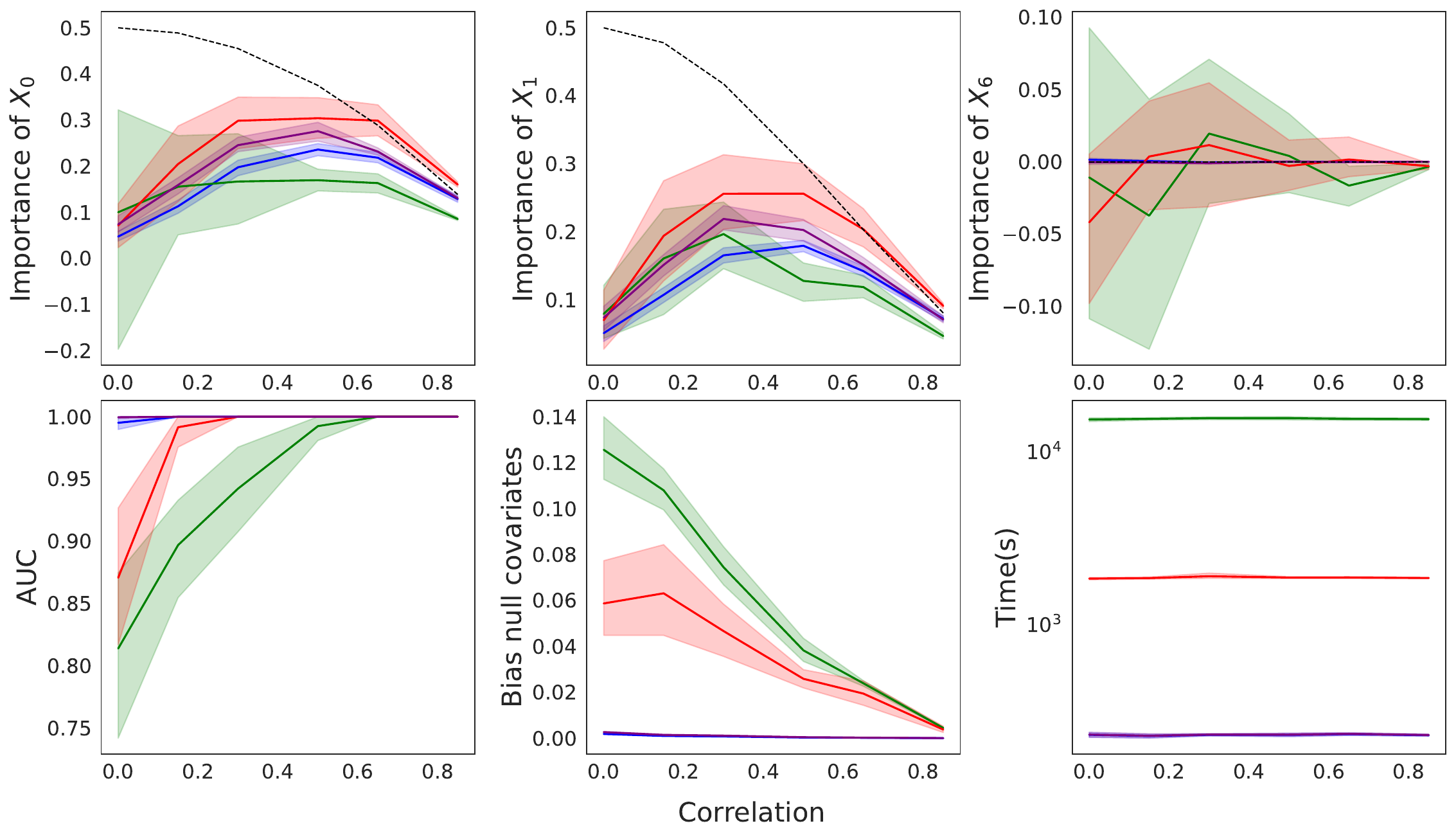}
  \end{subfigure}

  \caption{\textbf{Correlation effect in non-linear setting:} Sobol-CPI and LOCO achieve comparable results for important covariates while providing double robustness for null covariates in a significantly faster manner.}
  \label{fig:corr_nonlin}
\end{figure}




\subsection{Inference through variance correction}\label{subsec:app_inf}
In this section, we compare the power and type-I error of different variance correction methods across LOCO and Sobol-CPI, using various values of $n_\mathrm{cal}$ in both linear and polynomial correlated settings.

As stated in Appendix~\ref{app:inf_lin}, different variance estimators have been studied: bootstrap-based estimates for the methods with the $\_\mathrm{bt}$ and $\_\mathrm{n2}$ suffixes, and sample variance divided by the sample size for the others for the remaining methods.

When the method has no suffix (denoted by a circle marker), this indicates that no variance correction is applied. This approach was used by \cite{chamma2024statistically}, who ignored the vanishing variance; therefore, if the variance is zero, the null hypothesis is retained directly, without an statistical test. The suffix $\_\mathrm{sqrt}$ denotes a square root correction term, $\_\mathrm{n}$ and $\_\mathrm{bt}$ indicate a linear correction term, and $\_\mathrm{n2}$ represents a quadratic correction term. $\_\mathrm{bt}$ uses bootstrap from the variance estimation.

The experiments are run 100 times for the linear setting and 30 for the polynomial.

\paragraph{Linear setting:} We study a linear setting with varying correlations ($\rho \in \{0.3, 0.6, 0.8\}$) to examine their effect on the power of the methods. However, since the results are similar across different values of $\rho$, we present only the graphics for $\rho = 0.6$.

In these experiments, \( \widehat{m} \), \( \widehat{m}_{-j} \), and \( \widehat{\nu}_{-j} \) are linear models (see \cref{lemma:GaussianAss}).
More formally, $y = X\beta + \epsilon$ with $\beta$ sparse with sparsity $0.25$ and value 1 when non-null, $\epsilon \sim \mathcal{N}(0, \|X\beta\|^2/\mathrm{snr})$ with $\mathrm{snr}=2$, and $X \sim \mathcal{N}(0, \Sigma)$ where $\Sigma_{i,j} = 0.6^{|i-j|}$ and $p = 100$.

From Figure~\ref{fig:lin_0.6}, we observe that, despite the computational cost being similar across all methods due to the use of linear models (with the bootstrap-based covariance methods being more computationally intensive), Sobol-CPI demonstrates significantly smaller bias—not only for the null covariates but also for the non-null covariates.

We first note that \texttt{LOCO-W}~\citep{williamson2023general} requires a larger sample size to achieve type-I error control. Consequently, while LOCO-W is capable of identifying significant covariates, it also produces a substantial number of false positives. Furthermore, we observe that Sobol-CPI(1) is the most powerful method, benefiting from its double robustness property.



\begin{figure}[htbp]
    \centering
        \includegraphics[width=0.4\textwidth]{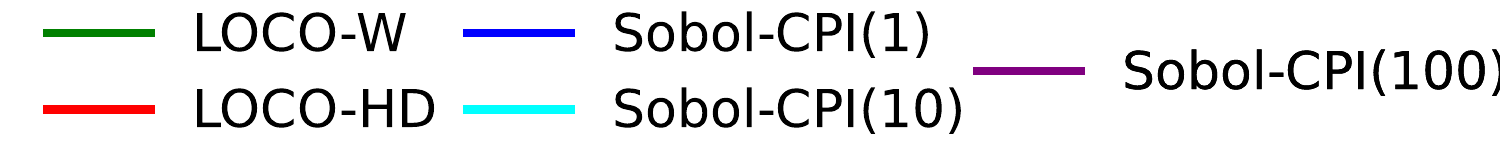}
        \includegraphics[width=0.8\textwidth]{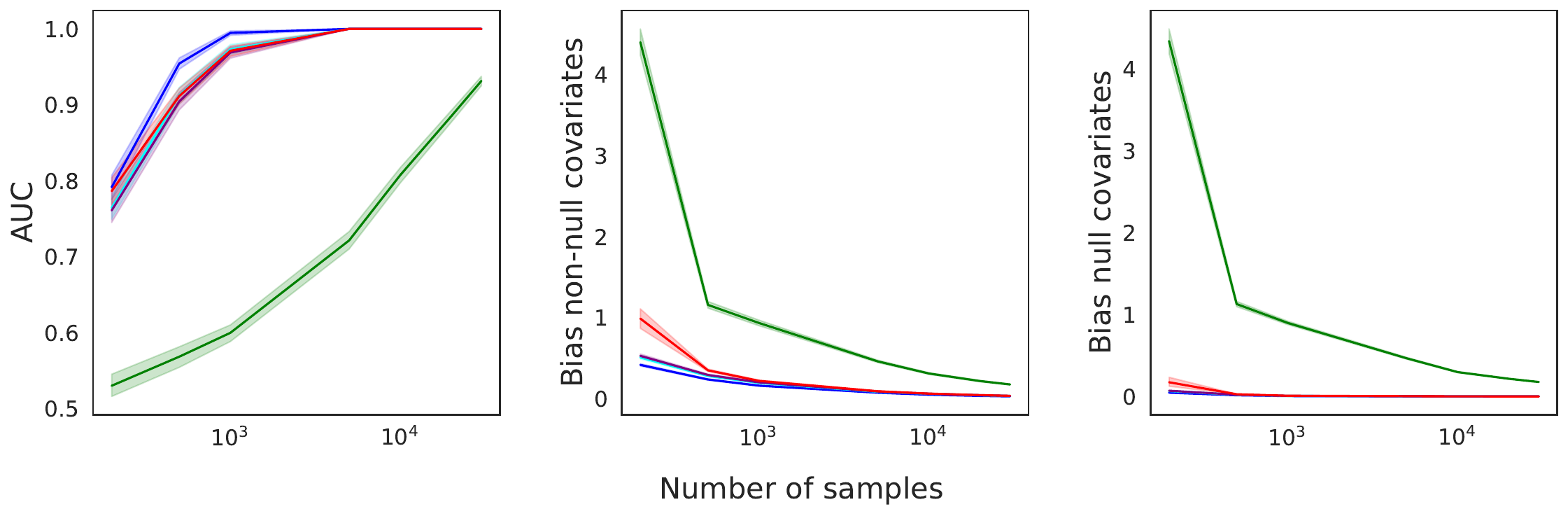}
        \includegraphics[width=0.7\textwidth]{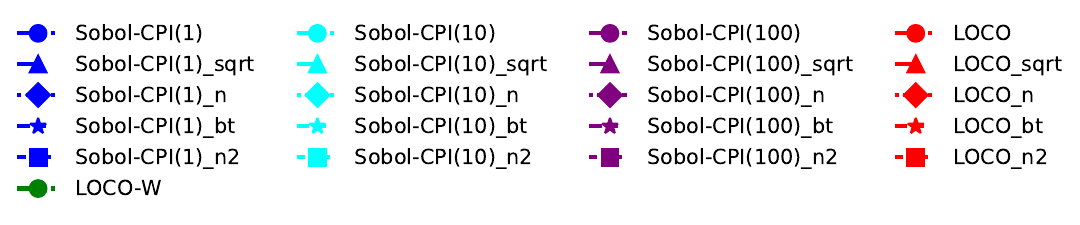}
\includegraphics[width=0.8\textwidth]{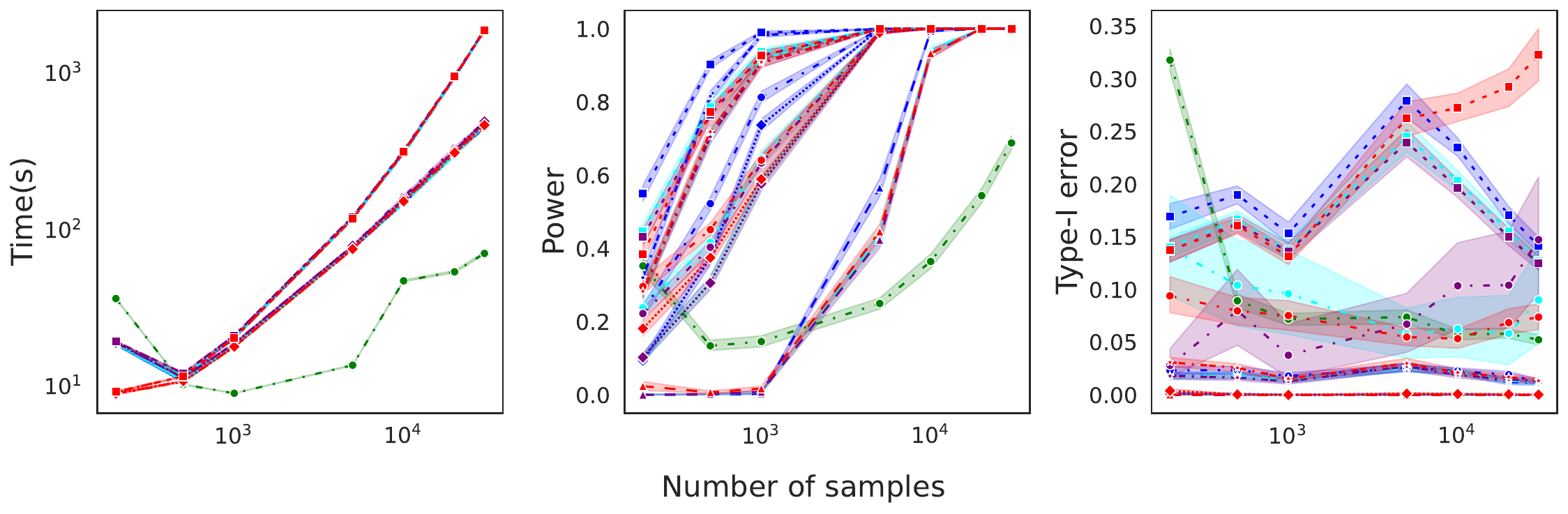}
     \caption{\textbf{General comparison of LOCO and Sobol-CPI in linear setting:}  From left to right, and top to bottom, we have the AUC to detect the null covariates, the bias in estimating the non-null covariates, the bias in estimating the null covariates, the time to compute the estimates and the tests, the power of the tests, and the type-I error. Sobol-CPI presents lower bias, more power with a valid type-I error with a linear additive correction. 
    }
    \label{fig:lin_0.6}
\end{figure}

In Figures~\ref{fig:type_lin_0.6} and~\ref{fig:power_lin_0.6}, we observe a detailed comparison of the type-I error and power, respectively.
Among the methods, the most powerful tests involve the quadratic correction. Nevertheless, these do not control the type-I error. Additionally, the uncorrected method (circle marker) also fails to control the type-I error due to vanishing variance. As shown in Section~\ref{subsec:app_inf}, the linear correction successfully preserves the type-I error. 

We observe that the most powerful method is $\mathrm{Sobol\text{-}CPI}(1)$. When using a larger calibration size $n_\mathrm{cal}$—which serves as a trade-off between variable selection and variable importance—the power slightly decreases due to the double robustness property.

Finally, we observe that in this case, there is a substantial gain when computing the variance using the bootstrap compared to using the sample variance in all the procedures. Both methods are valid, as the validity stems from the additive term, not from the specific choice of variance estimator.

\begin{figure}[htbp]
    \centering
    \includegraphics[width=0.7\textwidth]{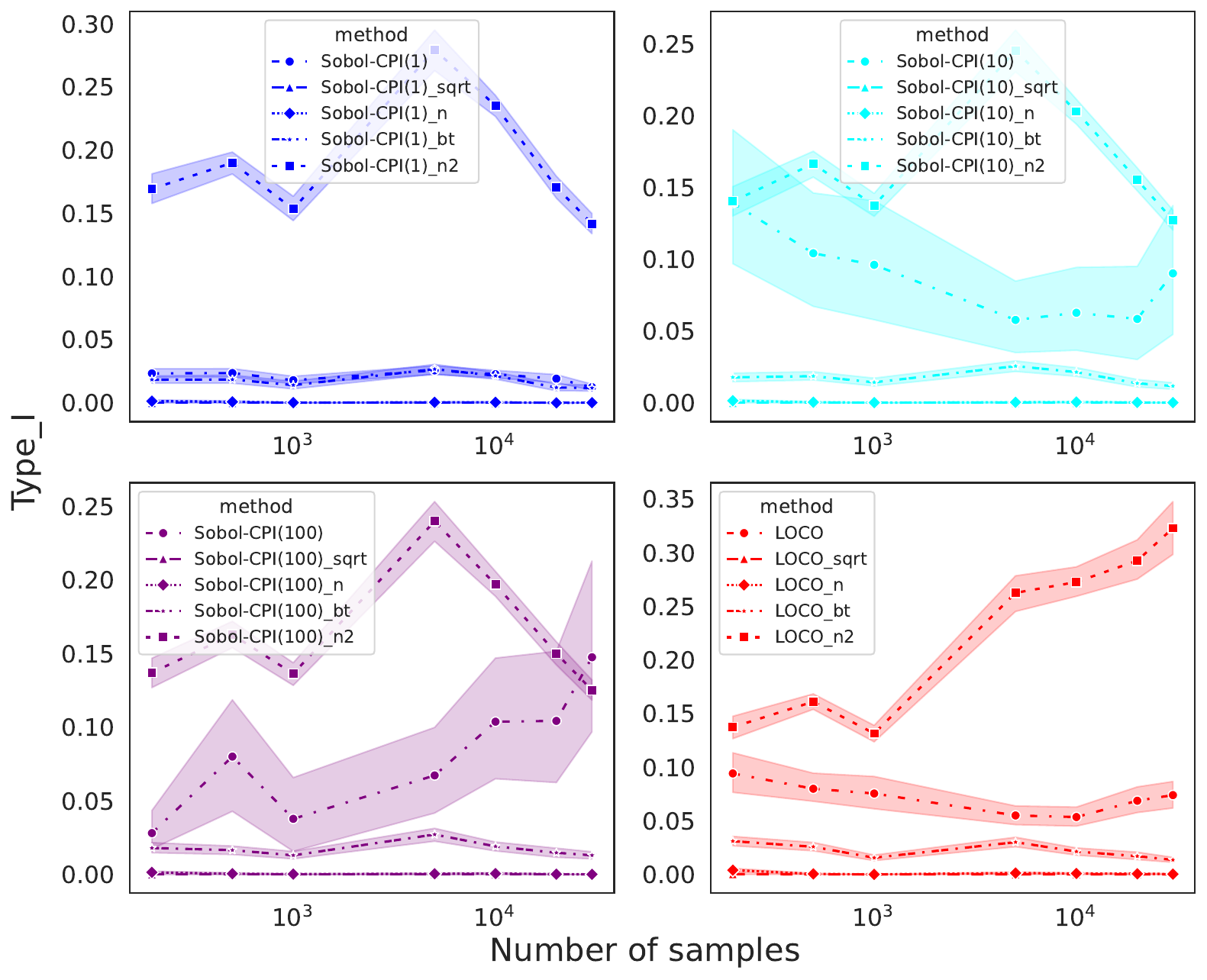}
    \caption{\textbf{Type-I error of Sobol-CPI and LOCO with different variance corrections in the Linear Setting:}  
    The first three figures show different corrections applied to Sobol-CPI with varying $n_\mathrm{cal}$, followed by the results for LOCO. The quadratic correction is not enough to control the type-I error.}
    \label{fig:type_lin_0.6}
\end{figure}

\begin{figure}[htbp]
    \centering
    \includegraphics[width=0.7\textwidth]{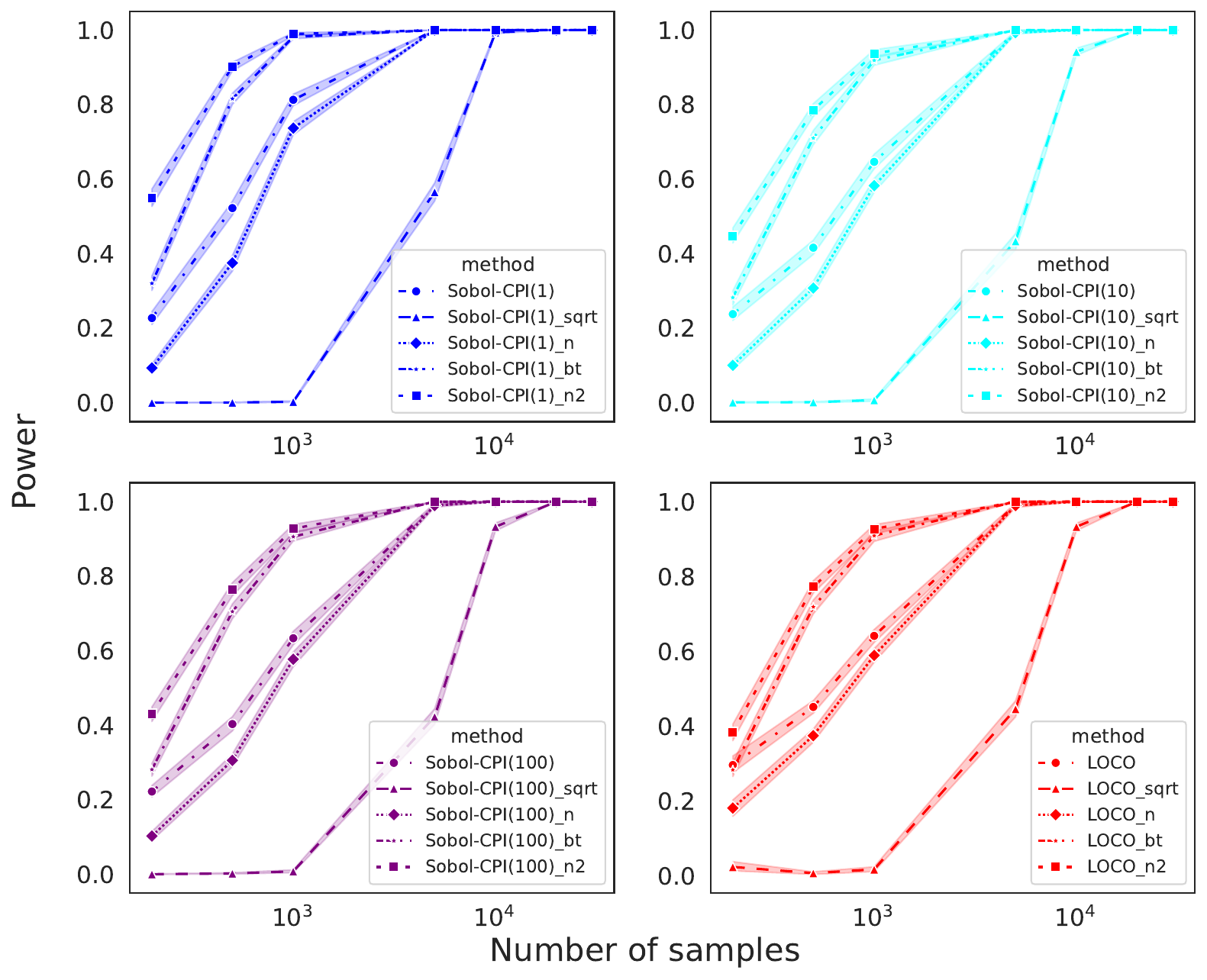}
    \caption{\textbf{Power of Sobol-CPI and LOCO with different variance corrections in the Linear Setting:}  
    The first three figures show different corrections applied to Sobol-CPI with varying $n_\mathrm{cal}$, followed by the results for LOCO. Sobol-CPI(1) is the most powerful method. Across the corrections, the linear decaying term is the least conservative, with variance estimated via bootstrap.}
    \label{fig:power_lin_0.6}
\end{figure}





\paragraph{Polynomial setting:} We study a polynomial setting with varying correlations($\rho \in \{0.3, 0.6, 0.8\}$) to examine their effect on the power of the methods. Nevertheless, we only report the results for $\rho=0.6$ as the other correlations give qualitatively similar results.  In this experiment, the input matrix is generated as before ( $X \sim \mathcal{N}(0, \Sigma)$ where $\Sigma_{i,j} = \rho^{|i-j|}$ and $p = 50$ ), but the output is polynomial in the input with interactions and degree three. The important covariates are randomly sampled with a sparsity of $0.25$. 

In all these experiments, \( \widehat{m} \) and \( \widehat{m}_{-j} \) are Gradient Boosting models, and \( \widehat{\nu}_{-j} \) is a Lasso. Therefore, in this setting, there is a clear computational benefit to using a permutation-based approach, as seen in the bottom-left panel of Figure~\ref{fig:poly_0.6}, where a substantial gain is observed. However, the benefits are not purely computational. At the top of the figure, we observe that the Sobol-CPI demonstrates superior classification performance, as indicated by a higher AUC. Additionally, it does not exhibit any bias on the null covariates, which is significant for the LOCO methods.

Regarding hypothesis testing, the conclusions are not entirely clear from this figure alone; we refer the reader to Figures~\ref{fig:type_poly_0.6} and~\ref{fig:power_poly_0.6} for a more detailed comparison. Nevertheless, it is evident that the Sobol-CPI methods are the most powerful, showing a clear separation in power from the LOCO methods. Moreover, the quadratic and linear corrections with bootstrap variance are not strict enough to control the type-I error. The \texttt{LOCO-W} method, in particular, fails to control the type-I error altogether—so even though it may yield discoveries, they are not reliable due to a high number of false positives. This can also be explained by the AUC: even with a large sample size, there is no clear distinction between important and null covariates.


\begin{figure}[htbp]
    \centering
        \includegraphics[width=0.4\textwidth]{Sobol-CPI/experiments/final-images/compact_images/legend_power_three_col.pdf}
        \includegraphics[width=0.6\textwidth]{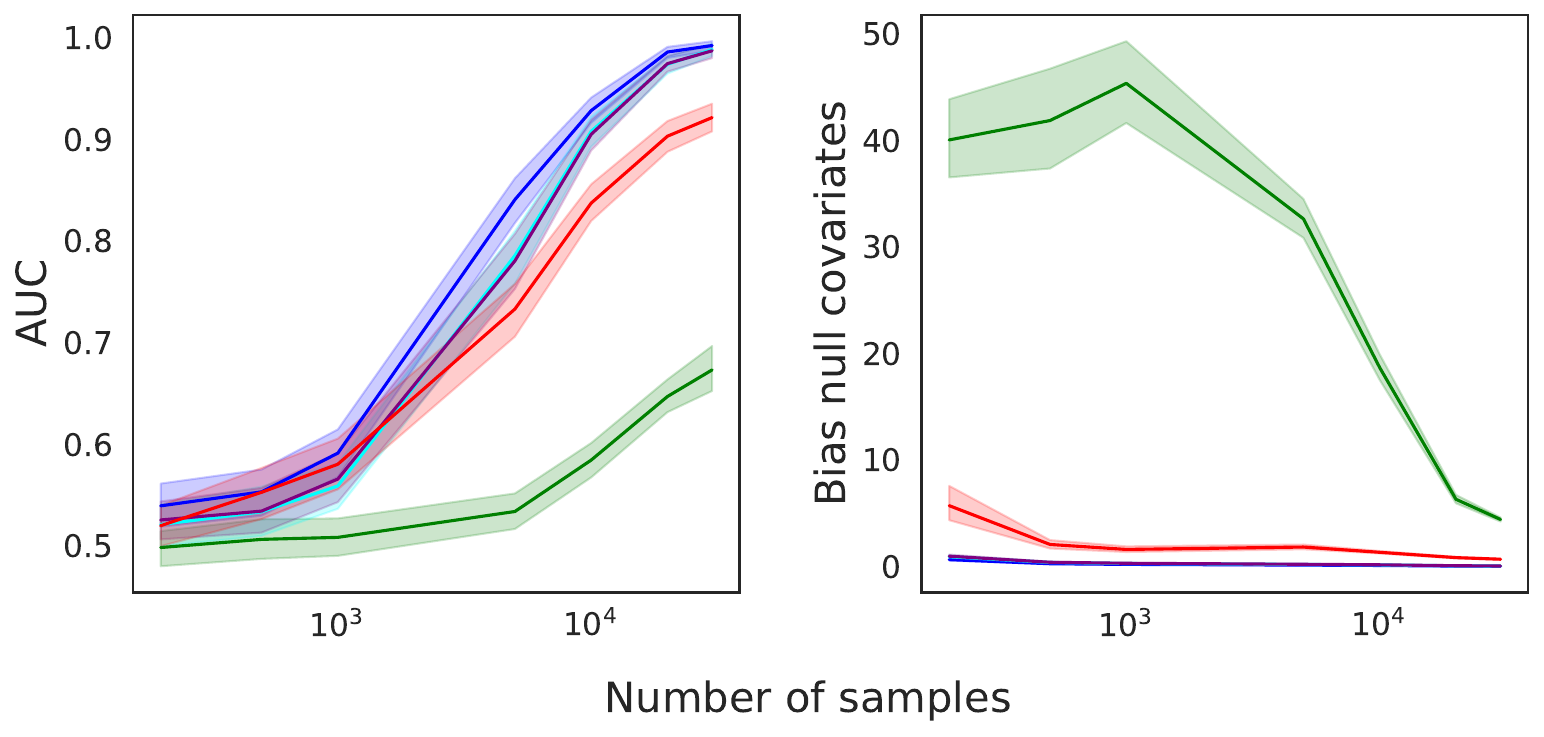}
        \includegraphics[width=0.7\textwidth]{Sobol-CPI/experiments/final-images/appendix/legend_inf_four_col.pdf}
        \includegraphics[width=0.8\textwidth]{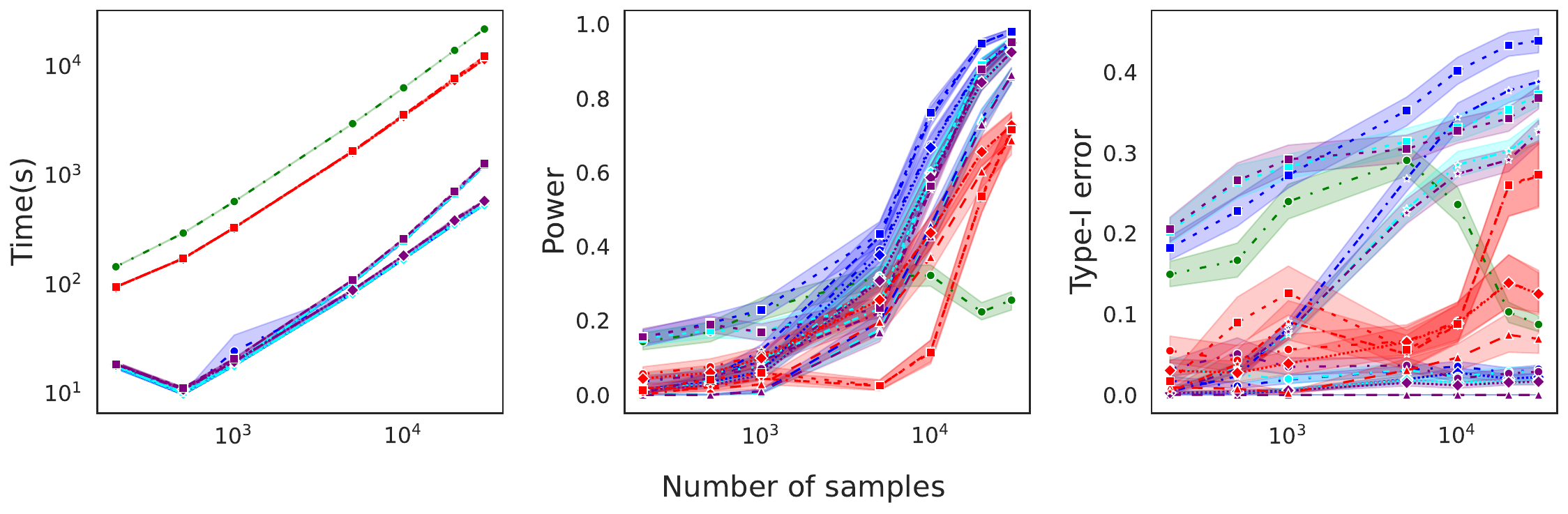}
      \caption{\textbf{General comparison of LOCO and Sobol-CPI in polynomial setting:} From left to right, and top to bottom, we have the AUC to detect the null covariates, the bias in estimating the null covariates, the time to compute the estimates and the tests, the power of the tests, and the type-I error. A greater gain is achieved with Sobol-CPI in these more complex settings, as it attains better results on the AUC without introducing bias on null covariates while requiring less computational effort.
      }
    \label{fig:poly_0.6}
\end{figure}

In Figure~\ref{fig:type_poly_0.6}, we observe, similarly to the linear case, that the quadratic correction is not sufficient to control the type-I error. However, we also see that the linear correction with the variance estimated via bootstrap is not sufficient either. This contrasts with the linear setting, and the reason lies in the fact that the convergence rate result (see \cref{lemm:doubl-rob-LM}), which allows the use of Markov's inequality (see \cref{app:inf_lin}) to guarantee type-I error control, is only available in the linear setting.

Additionally, we observe that for LOCO, even with the square root correction, the type-I error slightly exceeds the nominal level $\alpha = 0.05$, due to the high variability of the method. Finally, for Sobol-CPI, the square root correction does control the type-I error, but it may be overly conservative, as the observed error is exactly zero.

\begin{figure}[htbp]
    \centering
    \includegraphics[width=0.7\textwidth]{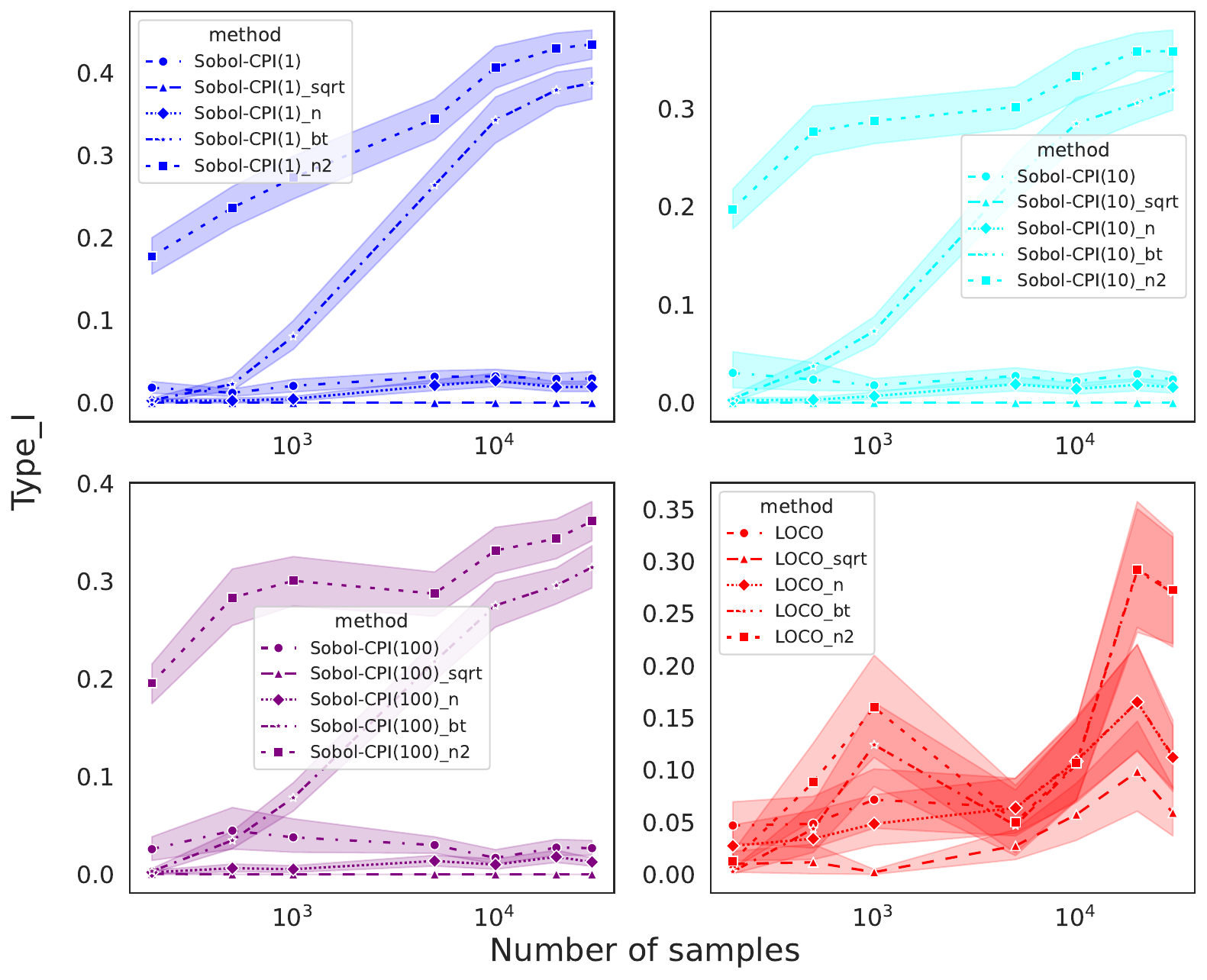}
    \caption{\textbf{Type-I error of Sobol-CPI and LOCO with different variance corrections in the Polynomial Setting:} The first three figures show different corrections applied to Sobol-CPI with varying $n_\mathrm{cal}$, followed by the results for LOCO. The linear correction with bootstrap variance is not enough to control the type-I error. 
}
    \label{fig:type_poly_0.6}
\end{figure}

In \cref{fig:power_poly_0.6}, we observe that Sobol-CPI(1) is the most powerful procedure, benefiting explicitly from double robustness. Additionally, across all methods, there is a decrease in power associated with more restrictive corrections. The gap observed with the square root correction arises from its overly conservative nature.

\begin{figure}[htbp]
    \centering
    \includegraphics[width=0.7\textwidth]{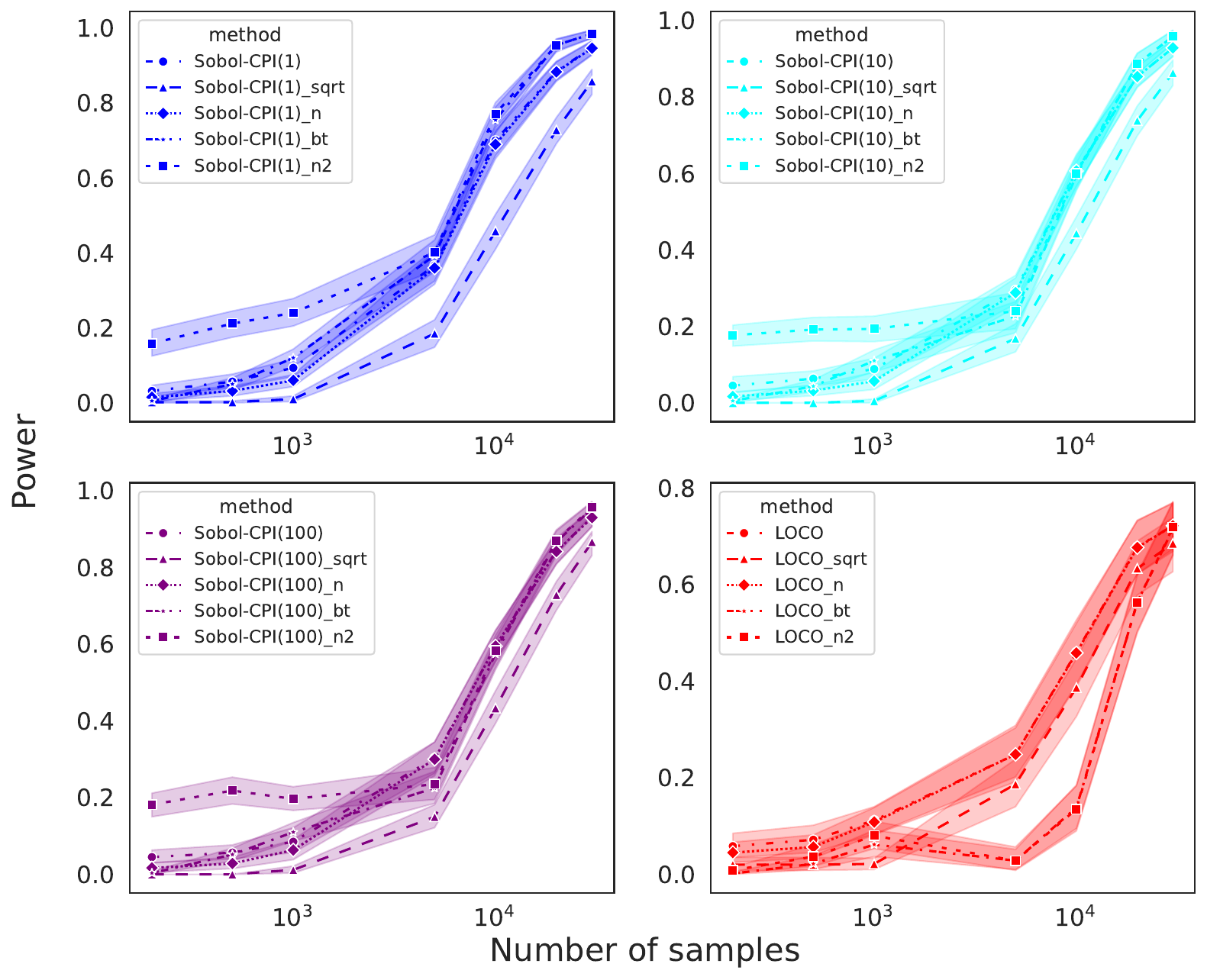}
    \caption{\textbf{Power of Sobol-CPI and LOCO with different variance corrections in the Polynomial Setting:}  
    The first three figures show different corrections applied to Sobol-CPI with varying $n_\mathrm{cal}$, followed by the results for LOCO. Sobol-CPI(1) is the most powerful method. Across the corrections, the linear decaying term is the least conservative, with variance estimated via bootstrap.}
    \label{fig:power_poly_0.6}
\end{figure}

\subsection{Real-data experiments}\label{app:real_data}

\subsubsection{Datasets}\label{app:datasets}

For empirical evaluation, this study uses five commonly referenced benchmark datasets from the machine learning literature.

The \emph{Wisconsin Diagnostic Breast Cancer} (WDBC) dataset consists of $n=569$ patient samples with $p=30$ numeric features of individual cells, obtained from a minimally invasive fine needle aspirate, to discriminate benign from malignant breast lumps. This dataset is widely used for binary classification tasks in medical informatics \citep{wdbc1995}.

The \emph{Wine Quality} dataset \citep{wine_quality_186} comprises two related subsets: \emph{Wine-Red} and \emph{Wine-White}. The red wine subset contains $n=1{,}599$ samples, and the white wine subset contains $n=4{,}898$ samples, each with $p=11$ physicochemical input variables and a sensory quality score as the target. 

The \emph{Diabetes} dataset (from scikit-learn’s \citep{scikit-learn} \texttt{load\_diabetes} function) is a regression dataset with $n=442$ samples and $p=10$ variables: age, sex, body mass index, average blood pressure, and six blood serum measurements, as well as the response of interest, a quantitative measure of disease progression one year after baseline.

Finally, the \emph{California Housing} dataset \citep{pace1997} contains $n=20{,}640$ samples of aggregated census data with $p=8$ numeric features describing socio-economic and geographic characteristics of California districts, and a continuous target variable representing median house value. 

\subsubsection{Double robustness and inference}

Since in real data the importance of a feature (or even knowing whether a feature is important or not) is unknown, we added an artificial feature which is a function of the original ones plus an independent noise term. Therefore, we can quantify the correlation with this feature to make the setting more complex, and we can also quantify the number of false positives by repeating the experiment across different seeds and counting the number of times this feature is selected. The discoveries are defined as the number of original features that are selected. We study the double robustness in practice, which is visible since the CFI methods assign a null importance to the artificial feature. We also study the power of the correction procedures and tests considered.

We investigate tests corrected by additive terms, which may be of order $\sqrt{n}$ as proposed by \citet{locoVSShapley}, or linear as we propose for the linear setting, where the faster convergence rate can be exploited (Lemma~\ref{lemm:doubl-rob-LM}). In particular, we also tested quadratic corrections, which failed to control the error. Additionally, we considered nonparametric tests, namely the Wilcoxon and the Sign test. We ran these simulations over all datasets from Appendix~\ref{app:datasets} and across standard ML models: Lasso, Gradient Boosting, Random Forest, Neural Network, and a SuperLearner combining the previous ones. Since the results were consistent across all experiments, we only report those for the SuperLearner on WDBC (Figure~\ref{fig:app_wdbc}) and red wine quality (Figure~\ref{fig:app_red_wine}).

In Figure~\ref{fig:app_wdbc}, we first observe that, due to double robustness, methods that do not rely on refitting are able to detect the null feature faster and with less variance. We also observe that Sobol-CPI(1) with the Wilcoxon test is the most powerful test for any correlation level, while methods based on additive corrections are too conservative. We further observe that the raw methods based on a t-test (\texttt{Sobol-CPI(1)} or \texttt{LOCO}) fail to control the error due to vanishing variance. Finally, we point out the clear time gain obtained by avoiding refitting a SuperLearner for each feature.

\begin{figure}[htbp]
    \centering
    \includegraphics[width=1.0\textwidth]{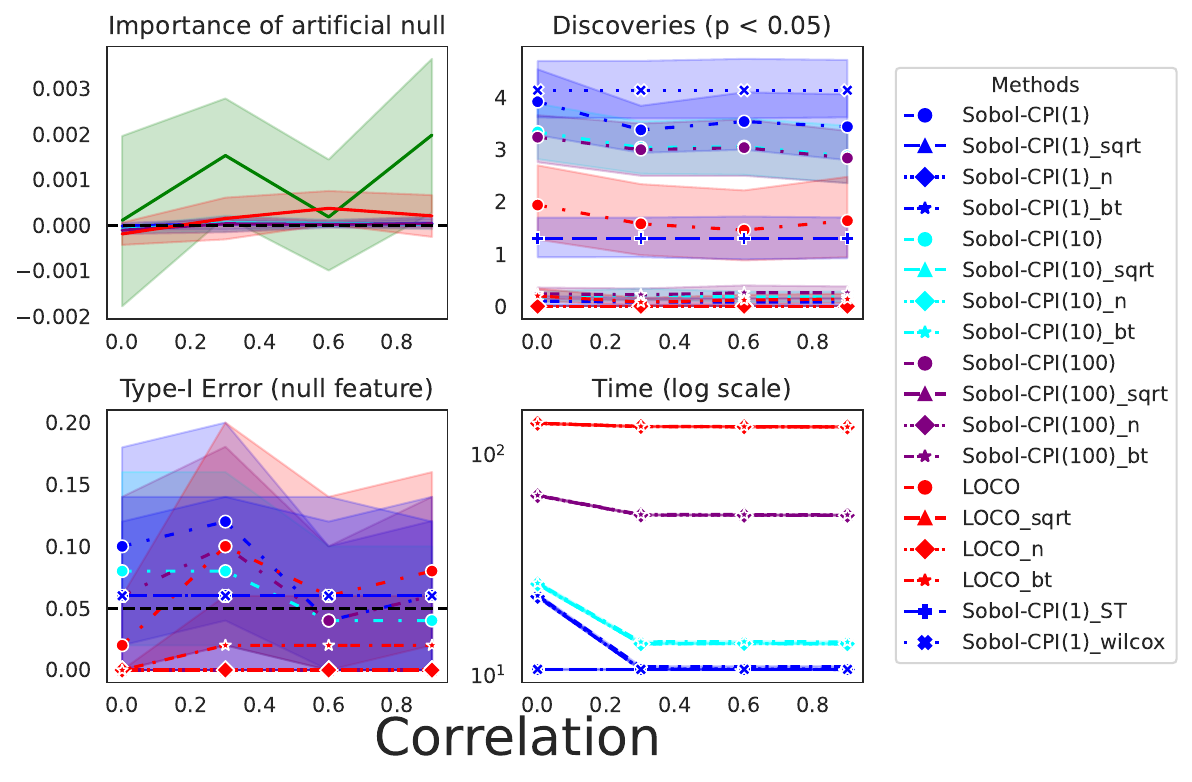}
    \caption{\textbf{Double robustness and inference on the WDBC dataset with SuperLearner:} Sobol-CPI-based methods do not assign any importance to the null artificial feature, and Sobol-CPI(1) combined with the Wilcoxon test is both the most powerful and the fastest thanks to avoiding the refitting of the SuperLearner. }
    \label{fig:app_wdbc}
\end{figure}

Similar conclusions can be drawn for the red wine dataset (Figure~\ref{fig:app_red_wine}). Indeed, it is clear not only that the Sobol-CPI methods do not assign any importance to the artificial feature, but also that they are, in general, much more powerful. Moreover, the Wilcoxon test applied to Sobol-CPI(1) is the most powerful.

\begin{figure}[htbp]
    \centering
    \includegraphics[width=1.0\textwidth]{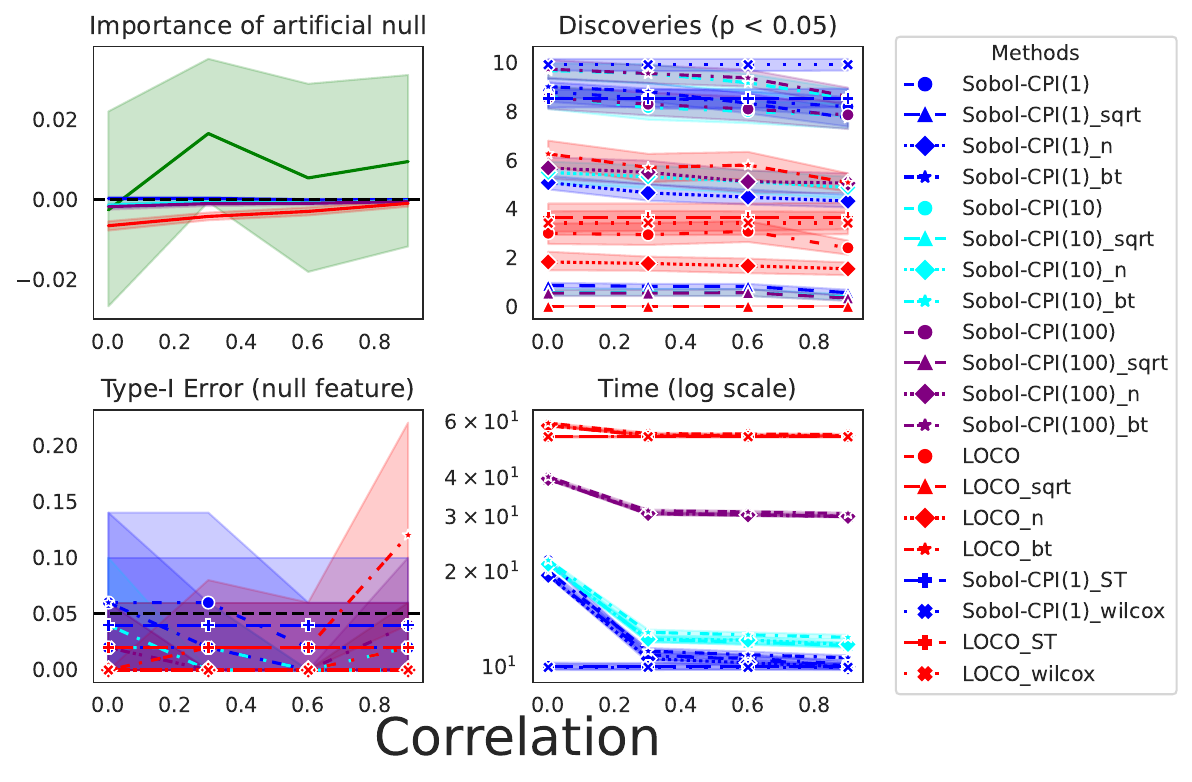}
    \caption{\textbf{Double robustness and inference on the Red Wine dataset with SuperLearner:} Sobol-CPI-based methods do not assign any importance to the null artificial feature, and Sobol-CPI(1) combined with the Wilcoxon test is both the most powerful and the fastest.}
    \label{fig:app_red_wine}
\end{figure}

\subsubsection{Double robustness and Asymptotic relevance }\label{app:asymp_rel_real_data}

In Figure~\ref{fig:app_real_asymp_rele} we report the importance of the additional null feature with respect to its correlation with the original features, across all real datasets from Appendix~\ref{app:datasets} and standard ML models (Lasso, Gradient Boosting, Random Forest, Neural Network, and their SuperLearner). We evaluate \texttt{Sobol-CPI(1/10/100)}, the standard \texttt{LOCO}, its data-splitting variant \texttt{LOCO-W} from \cite{williamson2023general}, and the Gaussian conditional samplers from \texttt{fippy} producing \texttt{CFI} and \texttt{scSAGEvf}, which correspond to \texttt{Sobol-CPI(1)} and \texttt{Sobol-CPI(50)} respectively, without the bias correction from Lemma~\ref{lemm:doubl-rob-LM}, which we manually add. 

The \emph{double robustness} property persists: Sobol--CPI-based methods exhibit a markedly smaller variance than refitting-based LOCO methods and assign (almost) zero importance to the null feature across all datasets and correlation levels, for both regression and Gaussian conditional samplers.

We also study \texttt{PFI} in connection with the asymptotic relevance assumption. \texttt{PFI} typically assigns no importance to the null feature (as visible from the scale), except for high correlations under Neural Networks. This indicates that while the model does not utilize the null feature, there can still be nontrivial gains from conditional samplers. In addition, we recall that \texttt{PFI} does not produce valid $p$-values, making it unsuitable for inference. In contrast, for the \texttt{CFI} we have provided valid inference procedures, such as the Wilcoxon test and the variance correction, enabling hypothesis testing for variable importance.

\begin{figure}[htbp]
    \centering
    \includegraphics[width=1.0\textwidth]{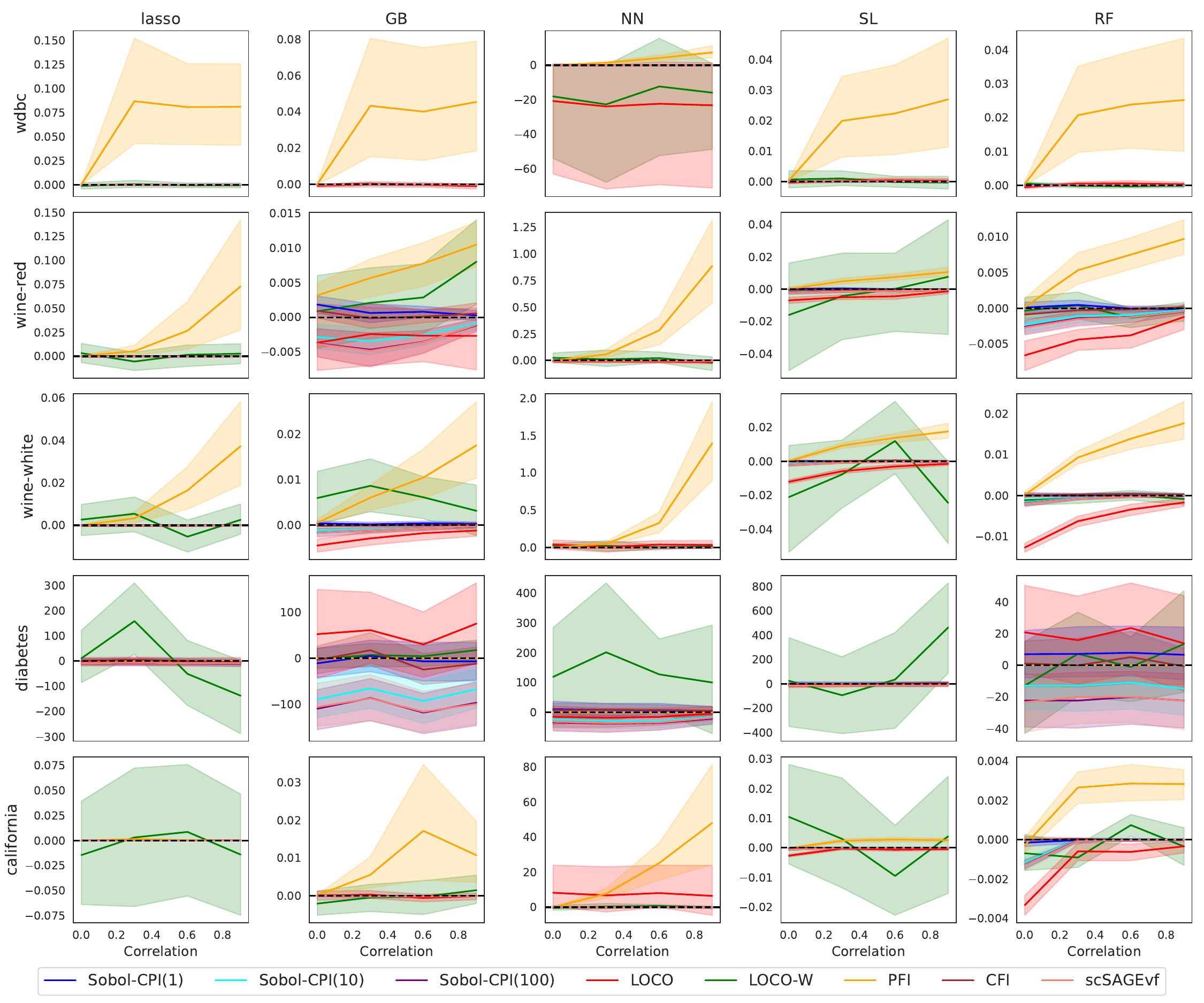}
    \caption{\textbf{Double robustness and asymptotic relevance in real data.}
        Across all real datasets and models, Sobol-CPI methods assign zero importance to the artificial null feature for every correlation level. This also holds for other sampling schemes such as the Gaussian one (\texttt{CFI} $\equiv$ \texttt{Sobol-CPI(1)} and \texttt{scSAGEvf} $\equiv$ \texttt{Sobol-CPI(50)}), illustrating the \textbf{double robustness} property. In contrast, \texttt{PFI}, which can be viewed as a \texttt{CFI} with a severely misspecified imputer, still yields small importance on this null feature, except for high correlations under neural networks, consistent with the \textbf{asymptotic relevance} assumption. }
    \label{fig:app_real_asymp_rele}
\end{figure}

\section{Explicit Total Sobol Index examples}\label{sect:expl_TSI}

\begin{examp}[LM with Gaussian covariates]\label{ex:LOCO-LM} Given $X\sim\mathcal{N}(\mu, \Sigma)$ and $y=\beta X+\epsilon$ we note that
\begin{align*}
    \psi_{\mathrm{TSI}}(j, P_0)&=\e{(m(X)-m_{-j}(X^{-j}))^2}\\
    &=\beta_{j}^2\e{(X^j-\e{X^{j}|X^{-j}})^2}\\
    &=\beta_j^2\e{\mathbb{V}(X^j|X^{-j})}\\
    &=\beta_j^2\Sigma^j_\mathrm{cond},
\end{align*}
with $\Sigma^j_\mathrm{cond}:=\Sigma_{j, j}-\Sigma_{j,-j}\Sigma^{-1}_{-j, -j}\Sigma_{-j, j}$.
\end{examp}

\begin{examp}[Non-linear setting]\label{ex:LOCO-nonlin}
In this example we will recover the example of no-linear setting from \citet{scornet2022mda} but changing the input covariance matrix to obtain more complex relationships between the covariates. Indeed, we will have $y=\alpha X^0X^1\mathbb{I}_{X^2>0}+\beta X^3X^4\mathbb{I}_{X^2<0}$, where $X$ is $p$-dimensional centered Gaussian with a Toeplitz covariance matrix where the $i,j$-th entry is given by $\rho^{|i-j|}$. In this setting, we are going to compute the $\psi_\mathrm{TSI}(j, P_0)$ for the covariate $X^0$ and $X_1$.

First, we observe that $$m_{-0}(X^{-0})=\e{m(X)|X^{-0}}=\alpha \e{X^0|X^{-0}}X^1\mathbb{I}_{X^2>0}+\beta X^3X^4\mathbb{I}_{X^2<0}.$$

Then, we can develop LOCO as

\begin{align*}
    \psi_\mathrm{TSI}(0, P_0)&=\e{(m(X)-m_{-0}(X^{-0}))^2}\\
    &=\e{\left(\alpha X^1\mathbb{I}_{X^2>0}\left(X^0-\e{X^0|X^{-0}}\right)\right)^2}\\
    &=\alpha^2 \e{(X^1)^2\mathbb{I}_{X^2>0}\left(X^0-\e{X^0|X^{-0}}\right)^2}\\
    &=\alpha^2 \e{(X^1)^2\mathbb{I}_{X^2>0}}\e{\left(X^0-\e{X^0|X^{-0}}\right)^2}.\text{ using $X^0-\e{X^0|X^{-0}}\indep X^{-0}$}
\end{align*}

The first term is exactly $\Sigma_{1,1}/2$. To see this, we first observe that as the covariates are centered and symmetrical, then $\e{(X^1)^2\mathbb{I}_{X^2>0}}=\e{(X^1)^2\mathbb{I}_{X^2<0}}$. Therefore, we have that
\begin{align*}
    \Sigma_{1,1}=\e{(X^1-\e{X^1})^2}=\e{(X^1)^2}=\e{(X^1)^2(\mathbb{I}_{X_2>0}+\mathbb{I}_{X_2<0})}=2\e{(X^1)^2\mathbb{I}_{X_2>0}},
\end{align*}
where we have used that $\e{X^1}=0$. We also observe that as it is a Toeplitz matrix, $\Sigma_{1,1}=1$. Then, $\psi_\mathrm{TSI}(0, P_0)=\alpha^2/2\e{\left(X^0-\e{X^0|X^{-0}}\right)^2}=\alpha^2/2\e{\mathbb{V}(X^0|X^{-0})}.$ Note that as it is a Gaussian vector, the variance is exactly $\Sigma_{0,0}-\Sigma_{0,-0}\Sigma^{-1}_{-0,-0}\Sigma_{-0,0}$. We also observe that as it is a Toeplitz matrix, we have the property that $\Sigma_{-0,0}=\rho\Sigma_{-0, 1}=\rho \Sigma_{-0,-0}(\mathbf{1}, \mathbf{0}, \ldots , \mathbf{0})^\top$. Thus, we can develop the last term as
\begin{align*}
    \e{\mathbb{V}(X^0|X^{-0})}&= \Sigma_{0,0}-\Sigma_{0,-0}\Sigma^{-1}_{-0,-0}\Sigma_{-0,0}\\
    &= 1-\rho\Sigma_{0,-0}\Sigma^{-1}_{-0,-0}\Sigma_{-0,-0}(\mathbf{1}, \mathbf{0}, \ldots , \mathbf{0})^\top\\
    &=1-\rho\Sigma_{0,-0}(\mathbf{1}, \mathbf{0}, \ldots , \mathbf{0})^\top\\
    &=1-\rho^2.
\end{align*}
Combining the previous, we conclude that, in this setting, $\psi_\mathrm{TSI}(0, P_0)=(1-\rho^2)/2.$
Similarly, for the first covariate we obtain $\psi_{\mathrm{TSI}}(1, P_0)=\rho^2/2(1-\Sigma_{1, -1}\Sigma_{-1, -1}^{-1}\Sigma_{-1, 1})$.  
\end{examp}

\end{document}